\pgfplotsset{compat=1.18}
\numberwithin{equation}{section}
\newtheorem{theorem}{Theorem}[section]
\newtheorem{lemma}[theorem]{Lemma}
\theoremstyle{definition}
\newtheorem{definition}[theorem]{Definition}
\newtheorem{proposition}[theorem]{Proposition}
\newtheorem{corollary}[theorem]{Corollary}
\newtheorem{note}[theorem]{Note}
\theoremstyle{remark}
\newtheorem{remark}[theorem]{Remark}
\numberwithin{equation}{section}
\definecolor{trp}{rgb}{1,1,1}
\definecolor{red}{rgb}{1,0,.2}
\definecolor{blue}{rgb}{0,0,1}
\definecolor{rgrey}{rgb}{.8,0.4,.4}  
\definecolor{grey}{rgb}{.13,.13,.13}  
\definecolor{green}{rgb}{0.0,0.4,0.2}
\numberwithin{equation}{section}
\newcommand{\R}{\mathbb{R}}
\newcommand{\N}{\mathbb{N}}
\newcommand{\0}{\underline{0}}
\begin{document}
	
	\title[]{Hausdorff dimension of the Wedding cake type surfaces}
	
	
	\author{Bal\'azs B\'ar\'any}
	\author{Manuj Verma}
        \thanks{BB and MV acknowledge support from the grants NKFI~K142169 and NKFI KKP144059 ``Fractal geometry and applications" Research Group.}
	\address{Department of Stochastics \\ Institute of Mathematics \\  Budapest University of Technology and Economics \\ M\H{u}egyetem rkp. 3., H-1111 Budapest, Hungary }
	\address[Bal\'azs B\'ar\'any]{barany.balazs@ttk.bme.hu}
	\address[Manuj Verma]{mathmanuj@gmail.com }

	
	%

	
	\date{\today}
	\subjclass{Primary 28A80}
	
	\keywords{Hausdorff dimension, Self-affine sets, Self-similar measure, Fractal interpolation surfaces, Wedding cake surfaces}
	
	\begin{abstract} In this paper, we study the Hausdorff dimension of fractal interpolation surfaces (FISs) over a triangular domain.
    These FISs are known as `wedding cake surfaces'. These surfaces are the attractor of some deterministic self-affine iterated function systems (IFS) on $\mathbb{R}^3$ generated by a fractal interpolation algorithm. Due to the recent seminal result of Rapaport (Adv. Math. 449 (2024) 109734), the dimension theory of self-affine IFS on $\mathbb{R}^3$ is known whenever the IFS is strongly irreducible and proximal. However, the self-affine IFSs associated with FIS are not strongly irreducible. We prove that the Hausdorff dimension of the self-affine set (or FIS) is the same as the affinity dimension outside a set of scaling parameters with zero Lebesgue measure. Lastly, by computing the overlapping number for the associated Furstenberg IFS, we determine the Hausdorff dimension for every type of scaling parameter in a certain range of parameters.  
		
	\end{abstract}
	\maketitle

	
	\section{Introduction and Statements}

\subsection{Historical background.} For $n\geq 2$, the system $\mathcal{I}=\big\{ f_1,f_2,\ldots,f_n \big\}$ is called an \texttt{iterated function system (IFS)} on $\mathbb{R}^d$, if the map $f_i$ is a contraction map on $\mathbb{R}^d$ for each $i\in \{1,2,\ldots, n\}$. Hutchinson \cite{Hutchinson} proved that there exists a unique non-empty compact set $K\subset \mathbb{R}^d$ such that 
$K=\bigcup_{i=1}^{n}f_i(K).$ 
Let $\mathbf{p}=(p_1,p_2,\dots,p_n)$ be a probability vector. Hutchinson \cite{Hutchinson} also proved that there exists a unique Borel probability measure $\mu$ supported on $K$ such that 
$\mu(B)=\sum_{i=1}^{n}p_{i}\mu(f_{i}^{-1}(B))$
for all Borel sets $B\subset \mathbb{R}^d$. The set $K$ is called the \texttt{attractor} of IFS $\mathcal{I}$ and the measure $\mu$ is known as the \texttt{stationary measure} corresponding to IFS $\mathcal{I}$ with the probability vector $\mathbf{p}$. The map $f: \mathbb{R}^d \to \mathbb{R}^d$ such that $f(x)=Ax+a$ is called a self-affine map, where $a\in \mathbb{R}^d$ and $A\in \text{GL}({d},\mathbb{R})$ with $\|A\|<1.$ The IFS $\mathcal{I}=\big\{ f_1,f_2,\ldots,f_n \big\}$ is called a self-affine IFS, if each $f_i$ is a self-affine map on $\mathbb{R}^d$ for each $i\in \{1,2,\dots, n\}$. The attractor of the self-affine IFS is known as \texttt{self-affine set}, and the stationary measure corresponding to the self-affine IFS $\mathcal{I}$ and probability vector $\mathbf{p}$ is known as \texttt{self-affine measure}. \par 
In this paper, our focus is on the Hausdorff dimension of the self-affine IFS on $\mathbb{R}^3$, which are generated by the fractal interpolation algorithm on the triangular domain of $\mathbb{R}^2$. In 1986, Barnsley \cite{MF1} introduced the concept of fractal interpolation functions (FIFs) on $\mathbb{R}$. The FIF is a function which interpolates the given data set, and the graph of this function is the attractor of some iterated function system (IFS). In \cite{MF1}, Barnsley provided an algorithm to construct an IFS corresponding to the given data set. Barnsley, Elton and Hardin \cite{MF2} determined the box dimension of the graph of the FIF for a given data set. Later, B\'ar\'any, Simon and Rams \cite{BRS2020} determined the Hausdorff
dimension of the graph of the FIF by studying the dimension theory of the associated self-affine IFS on $\mathbb{R}^2$ for a given data set on $\mathbb{R}$. \par 
In 1990, Massopust \cite{Mass1990} extended Barnsley's FIF theory on the plane and defined the notion of the fractal interpolation surfaces (FISs) and also provided an algorithm to construct a self-affine IFS on $\mathbb{R}^3$ corresponding to a given finite data sets over the triangular domain, where the data points on the boundary of the triangular domain are required to be coplanar. Barnsley called these surfaces ``wedding cake" surfaces. Under the consideration of uniform triangulation of the equilateral triangle and the linear part of the self-affine IFS does not contain the rotation matrix, Massopust \cite{Mass1990} determined the box dimension of the graph of corresponding FISs. Geronimo and Hardin \cite{GH1993} provided another construction of the FISs over the triangular domain and polygonal domain by considering uniform scaling parameters without assuming the coplanarity condition as in \cite{Mass1990}, and also determined the box dimension of the FISs under some condition. We note that the construction of the FISs on the rectangular domain was found in \cite{Dalla, Ruan}. \par 
According to our knowledge, the Hausdorff dimension of the FISs has not yet been studied in these cases.  The dimension theory of the FISs is equivalent to the dimension theory of the attractor of the corresponding self-affine IFS on $\mathbb{R}^3$. In 1988, Falconer \cite{Falconer1988} introduced a natural upper bound for the Hausdorff dimension of the self-affine sets in $\mathbb{R}^d$, which is known as the affinity dimension. Falconer \cite{Falconer1988} proved that if the self-affine IFS $\mathcal{I}=\{f_i(x)=A_ix+a_i\}_{i=1}^{n}$ with attractor $K$ satisfies $\|A_i\|<\frac{1}{3}~\forall~i\in\{1,2,\dots,n\}$, then 
for almost all ${\bold{a}}=(a_1,\dots,a_n)\in \mathbb{R}^{nd}$ 
\begin{equation}\label{Falconer}
\dim_{H}(K)=\min\{d,t\},
\end{equation}
where $t$ is the affinity dimension of the self-affine IFS $\mathcal{I}$ (see precise definition later in Section~\ref{sec:prelim}). After that, Solomyak \cite{Boris1998} showed that the Falconer's formula \eqref{Falconer} is also valid whenever $\|A_i\|<\frac{1}{2} ~\forall~i\in\{1,2,\dots,n\}$ and the bound $\frac{1}{2}$ is strict. In the planar case, B\'ar\'any, Hochman and Rapaport \cite{BHR} proved that if the self-affine IFS $\mathcal{I}$ is strongly irreducible and proximal, and satisfies the strong open set condition (SOSC), i.e. there exists an open and bounded set $U$ such that $$f_i(U)\subseteq U,\, f_i(U)\cap f_j(U)=\emptyset\text{ for $i\neq j$ and }U\cap K\neq\emptyset,$$ then \eqref{Falconer} holds. Later, Hochman and Rapaport \cite{HRESC} determined the more general result in the planar case when the maps in the self-affine $\mathcal{I}$ do not have a common fixed point, $\mathcal{I}$ is strongly irreducible and proximal, and satisfies the exponential separation condition. In the case $d=3$, Rapaport \cite{Rapa2024} proved recently that \eqref{Falconer} holds if the self-affine IFS $\mathcal{I}$ is strongly irreducible and proximal, and satisfies the SOSC.\par
Note that the self-affine IFS on $\mathbb{R}^3$ generated by the fractal interpolation algorithm for a given data set is not strongly irreducible; it is actually reducible. Thus, \cite{Rapa2024} is not applicable for studying the Hausdorff dimension of the FISs on $\mathbb{R}^3$.\par 
\subsection{ Massopust's Fractal Interpolation Surfaces.}\label{sec:massopust} First, we take the construction of the FISs given by Massopust \cite{Mass1990}. For determining the Hausdorff dimension, we consider the same assumption as taken by Massopust \cite{Mass1990} for the box dimension. We consider the equilateral triangle $\Delta$ with vertices $\{(0,0),(1,0),(\frac{1}{2},\frac{\sqrt{3}}{2})\}$ and an integer $N\geq3$. Then we divide each side into $N$ equal parts, we get a uniform triangulation $\{\Delta_i\}_{i=1}^{N^2}$ (see Figure~\ref{fig:triang} for $N=3$). Without loss of generality, we index the triangles $\Delta_i$, which are pointing up, by indices $i=1,\ldots,\frac{N(N+1)}{2}$, and the triangles which pointing down by $i=\frac{N(N+1)}{2}+1,\ldots,N^2$.

Let us denote the vertices of this triangularization on the plane by $\{q_i\}_{i=1}^{L(N)}$, where $L(N)=\frac{(N+1)(N+2)}{2}$ for $N\geq 3$. We will use the convention that $q_1,q_2$ and $q_3$ denote the vertices of the original equilateral triangle counted from the bottom left corner in anti-clockwise direction. We consider a data set $\{(q_k,a_k)\}_{k=1}^{L(N)}$ associated with the triangulation $\{\Delta_i\}_{i=1}^{N^2}$. We assume that the data points on the boundary of the equilateral triangle $\Delta$ are coplanar. Without loss of generality, we assume that $a_k=0$ for all $k$ such that the corresponding $q_{k}$ is on the boundary of the triangle $\Delta$. In particular, the data set is $\{(q_{i},0)\}_{i=1}^{9}\cup \{(q_{10}, a)\},$ for the case $N=3$, where $a\ne 0$ is a real number. We define the map $f^*\colon\{q_i\}_{i=1}^{L(N)}\to\{a_i\}_{i=1}^{L(N)}$ as follows $f^{*}(q_k)=a_k$ for $k=1,\ldots,L(N)$.

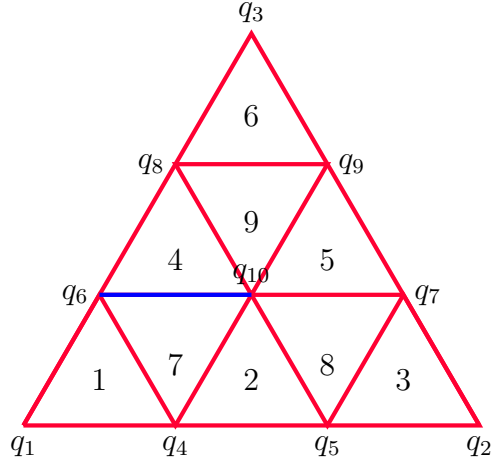
\begin{figure}
	\begin{center}
		\begin{tikzpicture}
            \draw[ultra thick, red] (-3,0)--(0, sqrt 27)--(3,0)--(-3,0);
			\node[below] at (-3,0) {$q_1$};
			\node[below] at (3,0) {$q_2$};
			\node[above] at (0,sqrt 27) {$q_{3}$};
			\draw[ultra thick, red](-3,0)--(-2, {( sqrt (27)/3)})--(-1,0)--(0,{( sqrt (27)/3)})--(1,0)--(2, {( sqrt (27)/3)})--(3,0);
          \draw[ultra thick, red](-2, {( sqrt (27)/3)})--(0,{( sqrt (27)/3)})--(-1,{( sqrt (108)/3)})--(1,{( sqrt (108)/3)})--(0,{( sqrt (27)/3)})--(2, {( sqrt (27)/3)});  
			\node[below] at (-1,0) {$q_4$};
			\node[left] at (-2, {( sqrt (27)/3)}) {$q_6$};
            \node[left] at (-1,{( sqrt (108)/3)}) {$q_8$};
			\node[right] at (1,{( sqrt (108)/3)}) {$q_9$};
            \node[below] at (1,0) {$q_5$};
        \draw[ultra thick, blue](-2, {( sqrt (27)/3)})--(0,{( sqrt (27)/3)});    
			\node[right] at (2, {( sqrt (27)/3)}) {$q_7$};
            \node[above] at (0,{( sqrt (27)/3)}) {$q_{10}$};
		\node at (-2,0.6) {1};
			\node at (0,0.6) {2};
            \node at (2,0.6) {3};
    \node at (-1,0.8) {7};
    \node at (1,0.8) {8};
    \node at (-1,2.2) {4};
    \node at (1,2.2) {5};
     \node at (0,2.7) {9};
      \node at (0,4.1) {6};
		\end{tikzpicture}  
	\end{center}
    \caption{Triangularization of the equilateral triangle for $N=3$. To visualise the sign of the change of the data set on each horizontal edge, we coloured it blue where the data set decreases from left to right, and otherwise, we coloured it red.}\label{fig:triang}
\end{figure}


For each $i\in \{1,2,\dots, N^2\}$, we denote the value of $f^*$ at the left vertex of the horizontal edge of $\Delta_i$ by $a_{1}^{i}$, value of $f^*$ at the right vertex of the horizontal edge by $a_{2}^{i}$ and value at the other vertex by $a_{3}^{i}$ of $\Delta_i$, where $a_{1}^{i},a_{2}^{i},a_{3}^{i}\in \{a_k\}_{k=1}^{L(N)}.$ For each $i\in \{1,2,\dots,N^2\}$, we define a similarity map $U_{i}: \Delta\to \Delta_i$ such that 
 \begin{equation}\label{eq:Ui}U_i(x,y)=\begin{cases}
     \big(\frac{x}{N}, \frac{y}{N}\big)+(e_i,f_i) &\text{if}~ a_{1}^{i}\geq a_{2}^{i}\text{ and }1\leq i\leq\frac{N(N+1)}{2},\\
     \big(\frac{x}{N}, -\frac{y}{N}\big)+(e_i,f_i) &\text{if}~ a_{1}^{i}\geq a_{2}^{i}\text{ and }\frac{N(N+1)}{2}+1\leq i\leq N^2,\\
     \big(-\frac{x}{N},\frac{y}{N}\big)+(g_i,f_i) &\text{if}~ a_{1}^{i}< a_{2}^{i}\text{ and }1\leq i\leq\frac{N(N+1)}{2},\\
     \big(-\frac{x}{N},-\frac{y}{N}\big)+(g_i,f_i) &\text{if}~ a_{1}^{i}< a_{2}^{i}\text{ and }\frac{N(N+1)}{2}+1\leq i\leq N^2,\\
 \end{cases}\end{equation}
 where $(e_i,f_i)$ and $(g_i,f_i)$ are the left and right vertices of the horizontal line of $\Delta_i$. Furthermore, we define for each $i\in \{1,2,\dots,N^2\}$, the height function as
	\begin{equation}\label{eq:Vi}
    V_i(x,y,z)=\bold{a}_i x+ b_i y+ s_i z+ c_i,
    \end{equation}
    where $s_i\in (0,1)$ and constants $\{\bold{a}_i, b_i, c_i\}$ are uniquely determined by the join-up condition i.e. 
    \begin{equation}\label{eq:boundcond}
    V_i(q_1,0)=f^*(U_i(q_1)),V_i(q_2,0)=f^*(U_i(q_2))\text{ and }V_i(q_3,0)=f^*(U_i(q_3))
    \end{equation}
    for every $i\in \{1,2,\dots,N^2\}$.

We define the affine IFS $\mathcal{I}:=\{W_i,i\in \{1,2,\dots,N^2\}\}$ on $\mathbb{R}^3$ such that the maps $W_i: \mathbb{R}^3 \to \mathbb{R}^3$ are defined as 
\begin{equation}\label{eq:Wi}
W_i(x,y,z)=(U_i(x,y), V_i(x,y,z)).
\end{equation}

By \cite{Mass1990}, the map $f^*$ defined only on the date set can be uniquely extended to a continuous function $f^*: \Delta \to \mathbb{R}$ such that $f^*(q_i)=a_i$ for all $i\in\{1,2\dots, L(N)\}$ and the graph $G({f^*})$ of the function $f^{*}$ is the attractor of the affine IFS $\mathcal{I}.$ In particular, it satisfies the equation:
    $$
    f^*(U_i(x,y))=V_i(x,y,f^*(x,y))\text{ for every }i\in\{1,\ldots,L(N)\}.
    $$
    The surfaces $G({f^*})$ are known as fractal interpolation surfaces. The maps of the IFS $\mathcal{I}$ in the case $N=3$ are precisely as follows:
	\begin{align*}
		W_1(x,y,z)&=\bigg(\frac{x}{3},\frac{y}{3},s_1z\bigg),\quad W_4(x,y,z)=\bigg(\frac{-x}{3}+\frac{1}{2},\frac{y}{3}+\frac{1}{2\sqrt{3}},-ax-\frac{a}{\sqrt{3}}y+s_4z+a\bigg)\\
		W_2(x,y,z)&=\bigg(\frac{x+1}{3},\frac{y}{3},\frac{2a}{\sqrt{3}}y+s_2z\bigg),~W_{9}(x,y,z)=\bigg(\frac{x}{3}+\frac{1}{3},\frac{-y}{3}+\frac{1}{\sqrt{3}},\frac{2a}{\sqrt{3}}y+s_9z\bigg)  \\
W_6(x,y,z)&=\bigg(\frac{x}{3}+\frac{1}{3},\frac{y}{3}+\frac{1}{\sqrt{3}},s_6z\bigg)~W_{8}(x,y,z)=\bigg(\frac{x}{3}+\frac{1}{2},\frac{-y}{3}+\frac{1}{2\sqrt{3}},-ax-\frac{a}{\sqrt{3}}y+s_8z+a\bigg),\\
        W_3(x,y,z)&=\bigg(\frac{x+2}{3},\frac{y}{3},s_3z\bigg),~W_7(x,y,z)=\bigg(\frac{-x}{3}+\frac{1}{2},\frac{-y}{3}+\frac{1}{2\sqrt{3}},-ax-\frac{a}{\sqrt{3}}y+s_7z+a\bigg)\\W_{5}(x,y,z)&=\bigg(\frac{x}{3}+\frac{1}{2},\frac{y}{3}+\frac{1}{2\sqrt{3}},-ax-\frac{a}{\sqrt{3}}y+s_5z+a\bigg).
	\end{align*}
\begin{figure}[h!]
	\begin{minipage}{0.6\textwidth}
\includegraphics[width=1.0\linewidth]{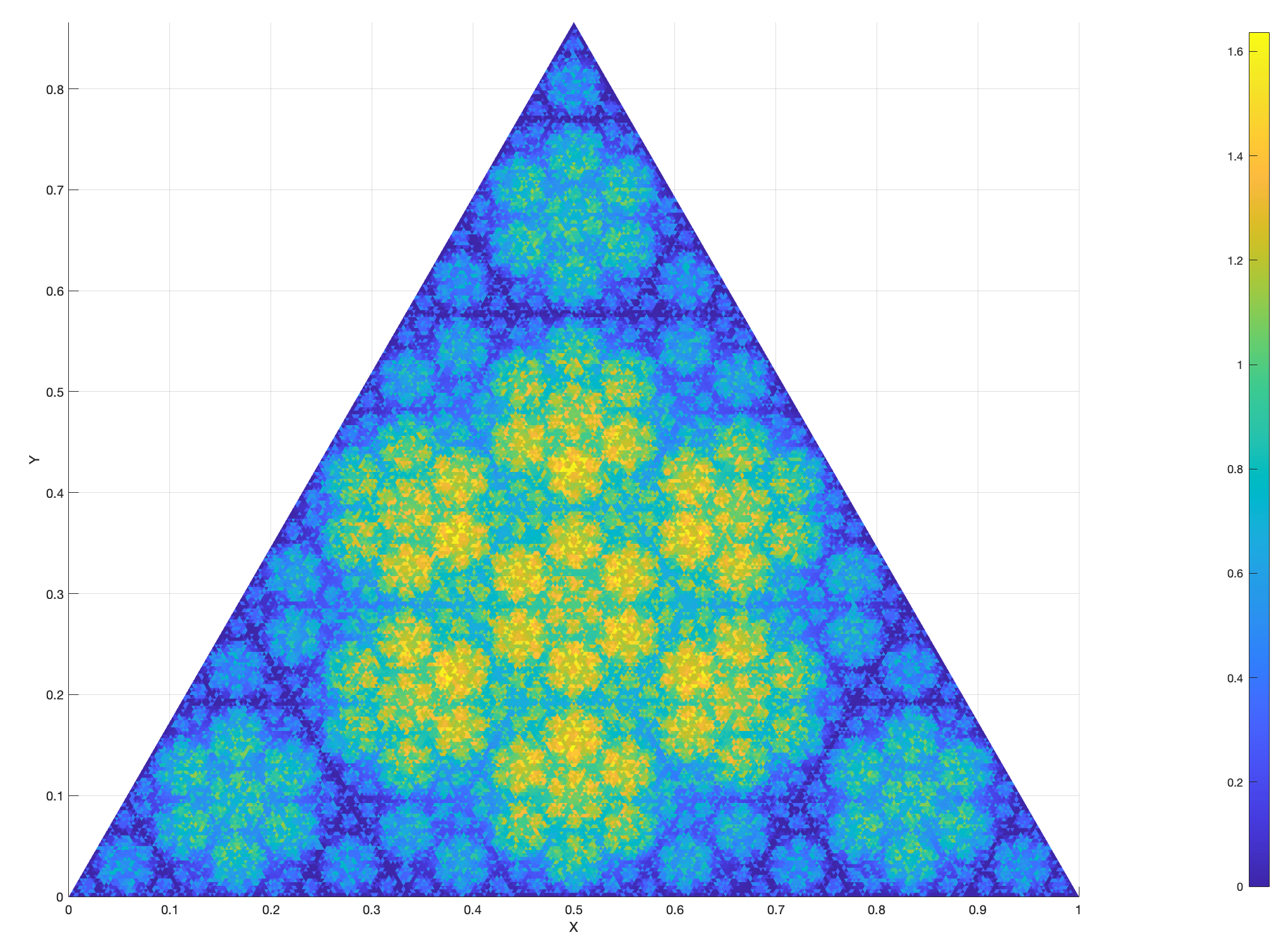}
	\end{minipage}\hspace*{0.2mm}
	\begin{minipage}{0.4\textwidth}\includegraphics[width=1.0\linewidth]{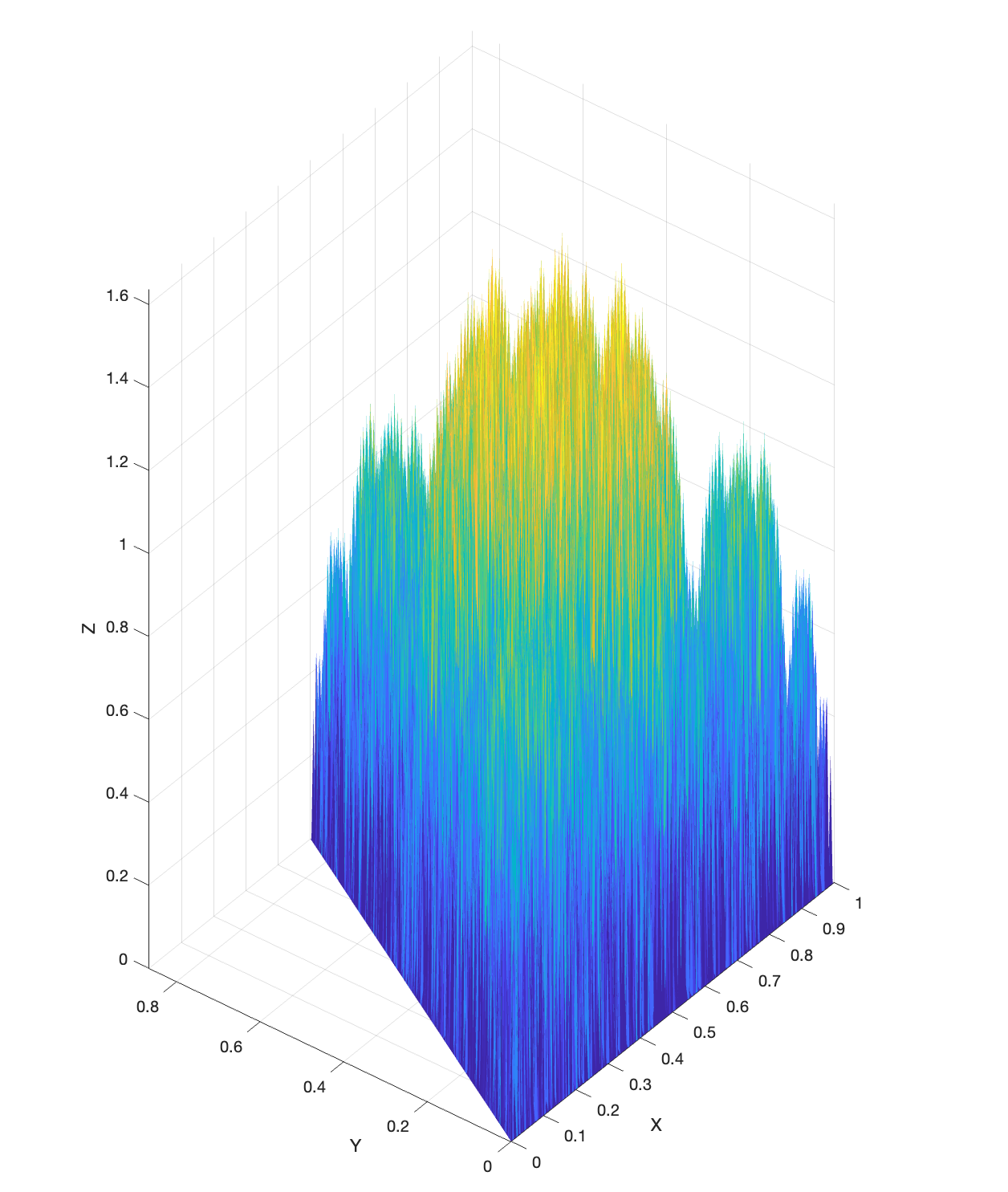}
	\end{minipage}	
    \caption{Graph of the fractal interpolation surface (top \& side view) with parameters $N=3$, $s_i=s=0.75$ and $a=1$.}
\end{figure}

 Define 
$$\mathcal{A}_1:=\{i\in\{1,2,\dots,N^2\}: a_{1}^{i}=a_{2}^{i}\},\quad  \mathcal{A}_2:=\{i\in\{1,2,\dots,N^2\}: a_{1}^{i}>a_{2}^{i}\}$$
$$\mathcal{A}_3:=\{i\in\{1,2,\dots,N^2\}: a_{1}^{i}<a_{2}^{i}\}.$$
Moreover, set 
$$D=\min_{k,i\in \mathcal{A}_3}\{\frac{|a_{1}^{k}-  a_{2}^{k}|}{|a_{1}^{i}-  a_{2}^{i}|}\}\text{ and }B=\frac{1}{1+\max_{i\in \mathcal{A}_2}\{B_i\}},\text{ where }B_{i}=\max_{k\in\mathcal{A}_{3}}\{\frac{|a_{1}^{i}-  a_{2}^{i}|}{|a_{1}^{k}-  a_{2}^{k}|}\}$$
for all $i\in \mathcal{A}_2$. Our result on the dimension of the graph $f^*$ is as follows:  
\begin{theorem}\label{Dimalmost}
    Let $\mathcal{I}:=\{W_i,i\in \{1,2,\dots,N^2\}\}$ be a self-affine IFS on $\mathbb{R}^3$ defined as above.  Let $G(f^*)$ be the attractor of the IFS $\mathcal{I}$. For each $i\in \{1,2,\dots,N^2\}$, we consider  $a_{1}^{i}\ne a_{2}^{i}$
if $a_{1}^{i}$ and $a_{2}^{i}$ both are not  on the boundary of original triangle $\Delta$. Then,
    $$\dim_{H}(G(f^*))=\dim_{B}(G(f^*))=1+\frac{\log(\sum_{i=1}^{N^2}s_i)}{\log N} $$ 
    for Lebesgue almost every scaling parameter $\underline{s}\in (\frac{1}{N},1)^{\#\mathcal{A}_1}\times (\frac{1}{N B},1)^{\#\mathcal{A}_2}\times (\frac{1}{N D},1)^{\#\mathcal{A}_3}$. 
\end{theorem}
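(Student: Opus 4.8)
The plan is to reduce the theorem to a statement about a single self-affine Bernoulli measure and then verify it by a transversality argument in the parameter $\underline{s}$, since the reducibility of $\mathcal{I}$ rules out a direct appeal to Rapaport's theorem. First I would record the block structure of the linear parts: each $A_i$ is lower block triangular with the common eigenvector $e_3$ (eigenvalue $s_i$) and a conformal $xy$-block of norm $1/N$, so the line $\mathrm{span}(e_3)$ is a common invariant, dominant direction. Because each $s_i$ exceeds the relevant threshold in the admissible region, and these thresholds (governed by $B$ and $D$) exactly control the coupling coefficients $\mathbf{a}_i,b_i$, one checks that the cocycle is $1$-dominated: for every word $I=i_1\cdots i_n$ one has $\alpha_1(A_I)\asymp s_{i_1}\cdots s_{i_n}$ and $\alpha_2(A_I)\asymp\alpha_3(A_I)\asymp N^{-n}$, with constants uniform in $I$. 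Feeding this into the singular value function identifies the affinity dimension as $t=1+\frac{\log(\sum_i s_i)}{\log N}\in(2,3)$ and, together with the box-counting upper bound (as in Massopust), yields $\dim_{H}(G(f^*))\le\dim_{B}(G(f^*))\le t$. It then remains to prove the matching lower bound.

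For the lower bound I would work with the Bernoulli measure $\mu$ carried by $G(f^*)$ with weights $p_i=s_i/\sum_j s_j$; a direct computation of the Lyapunov dimension gives $\dim_L\mu=t$, so it suffices to show $\dim_{H}\mu\ge t$ for Lebesgue-a.e. $\underline{s}$. The projection $\pi_1(x,y,z)=(x,y)$ conjugates the base dynamics to the similarity IFS $\{U_i\}$, which obeys the open set condition; hence $\nu:=(\pi_1)_*\mu$ is an exact-dimensional self-similar measure on $\Delta$ with $\dim\nu=h/\log N$, where $h=-\sum_i p_i\log p_i=\chi_1+\log(\sum_i s_i)$ and $\chi_1=-\sum_i p_i\log s_i$. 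Crucially $\nu$ and $\{U_i\}$ do not depend on $\underline{s}$, so all the parameter dependence is confined to the vertical coordinate, given by the convergent height series $z_{\underline{s}}(\omega)=\sum_{n\ge1}s_{\omega_1}\cdots s_{\omega_{n-1}}\,g_{\omega_n}(\sigma^{n}\omega)$, with empty product $1$ and $g_j$ affine in the base point.

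The heart of the argument is a potential-theoretic transversality estimate. I would bound the $t$-energy $I_t(\mu)=\iint\lVert p-q\rVert^{-t}\,d\mu(p)\,d\mu(q)$ after integrating in $\underline{s}$ over the admissible box, splitting the double integral according to the first index at which two codings $\omega,\tau$ differ. Since $s_i>1/N$, the vertical gap $s_{\omega_1}\cdots s_{\omega_{m-1}}$ at depth $m$ dominates the horizontal gap $N^{-m}$, so for base-close pairs the distance $\lVert p-q\rVert$ in $\mathbb{R}^3$ is comparable to the vertical gap $\lvert z_{\underline{s}}(\omega)-z_{\underline{s}}(\tau)\rvert$; the transversality condition then takes the form $\mathrm{Leb}\{\underline{s}:\lvert z_{\underline{s}}(\omega)-z_{\underline{s}}(\tau)\rvert\le r\}\lesssim r\,/\,(\text{leading scale})$. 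A geometric summation over the depth of first disagreement, weighted by $p_i=s_i/\sum_j s_j$, reproduces exactly the critical exponent $t=1+\log(\sum_i s_i)/\log N$, so $\int I_t(\mu)\,d\underline{s}<\infty$ and Fubini gives $\dim_{H}\mu\ge t$ for a.e. $\underline{s}$.

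The main obstacle is the verification of the transversality inequality uniformly over all pairs of cylinders and over the three parameter regimes $\mathcal{A}_1,\mathcal{A}_2,\mathcal{A}_3$. Concretely one must expand $z_{\underline{s}}(\omega)-z_{\underline{s}}(\tau)$ as a real-analytic function of $\underline{s}$, locate its first non-vanishing Taylor coefficient, and bound that coefficient below by the separation produced at the first disagreement of $\omega$ and $\tau$. This is precisely where the hypotheses $a_1^i\neq a_2^i$ (off the boundary) and the edge-ratio quantities $D,B$ enter: they prevent the horizontal height increments from cancelling and keep the contraction ratios $1/(Ns_i)$ of the associated slope (``Furstenberg'') IFS strictly below $1$. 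Controlling this non-degeneracy simultaneously for the up- and down-pointing triangles and for the sign changes in the linear parts of the $U_i$ is the delicate, case-dependent part of the proof.
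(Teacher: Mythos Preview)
Your plan diverges from the paper's at the very first step. You write that ``the reducibility of $\mathcal{I}$ rules out a direct appeal to Rapaport's theorem,'' but this conflates two different results of Rapaport. The 2024 theorem for strongly irreducible proximal systems is indeed unavailable, but the earlier result \cite{Rapa2018} (stated in the paper as Theorem~\ref{thm:rapaport}) is tailored exactly to block lower-triangular matrices of the form \eqref{eq:mxtype} and reduces the problem to showing that the associated \emph{Furstenberg measure} $\mu_F$ on $\mathbb{R}^2$ satisfies $\dim_H\mu_F>3-t_0$. The paper then projects $\mu_F$ to the $x$-axis, obtaining a one-dimensional self-similar IFS $\mathcal{J}_X$ with the common-fixed-point structure of Section~\ref{sec:CFS}; the thresholds $1/(NB)$ and $1/(ND)$ are precisely what is needed for Lemma~\ref{BasicCon}, and the companion paper \cite{BM2025} provides the exponential separation (ESC for CFS) for Lebesgue-a.e.\ $\underline\lambda=(1/(Ns_i))_i$. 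An explicit entropy calculation (Proposition~\ref{prop:furstdimae}) then shows the projected measure already has dimension exceeding $3-t_0$. No transversality on the graph itself is needed.

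Your direct energy/transversality route is not absurd, but as written it has a genuine gap and a factual slip. The slip: $\nu=(\pi_1)_*\mu$ \emph{does} depend on $\underline{s}$, since your weights $p_i=s_i/\sum_j s_j$ do; this forces either a countable-approximation argument over rational probability vectors or a more careful Fubini setup, neither of which you address. The gap is the one you yourself flag: the uniform transversality inequality for $z_{\underline s}(\omega)-z_{\underline s}(\tau)$. Establishing it requires controlling power series in the $s_i$ whose leading nonzero coefficient may sit at arbitrarily high order when the first blocks of $\omega$ and $\tau$ consist of maps sharing the fixed point $0$ (the indices in $\mathcal{A}_1$). This is exactly the difficulty that the CFS machinery of Section~\ref{sec:CFS} and \cite{BM2025} is designed to overcome via the block decomposition and the sets $\mathcal{L}_1,\ldots,\mathcal{L}_4$; a bare ``first nonzero Taylor coefficient'' argument does not give the needed uniformity across all pairs. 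In short, the hard analytic content you defer to the ``main obstacle'' is precisely what the paper isolates and resolves on the Furstenberg side, and your sketch does not supply an alternative mechanism for it.
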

In the case of $N=3$, one can see that $\#\mathcal{A}_1=5,\#\mathcal{A}_2=2, \#\mathcal{A}_3=2$ and $B=\frac{1}{2}$ and $D=1$. Thus, we have the following Corollary for the typical type results for the FISs.
    \begin{corollary}
       For Lebesgue almost every $\underline{s}=(s_1,s_2,s_3,s_4,s_5,s_6,s_7,s_8,s_9)$
       such that $s_1,s_2,s_3,s_{4},s_6,s_7,s_9\in (\frac{1}{3},1)$ and $s_5,s_8\in (\frac{2}{3},1)$, we have $$\dim_{H}(G(f^*))=\dim_{B}(G(f^*))=1+\frac{\log(\sum_{i=1}^{9}s_i)}{\log(3)}.$$  
    \end{corollary}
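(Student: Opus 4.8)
The plan is to derive the Corollary as a direct specialization of Theorem~\ref{Dimalmost} to $N=3$, so the whole task reduces to three bookkeeping steps: (i) partitioning the nine triangles $\{\Delta_i\}_{i=1}^{9}$ into the classes $\mathcal{A}_1,\mathcal{A}_2,\mathcal{A}_3$, (ii) evaluating the constants $B$ and $D$ for this configuration, and (iii) translating the set-indexed parameter box of Theorem~\ref{Dimalmost} into the coordinate-indexed ranges stated in the Corollary.

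First I would record the combinatorial data of the $N=3$ triangulation. Since $L(3)=\frac{(3+1)(3+2)}{2}=10$, there are ten vertices $q_1,\dots,q_{10}$, of which only $q_{10}=(0,\sqrt{27}/3)$ is interior; hence the data set is $a_1=\dots=a_9=0$ and $a_{10}=a\neq0$, and I may assume $a>0$ (the case $a<0$ being symmetric). Reading off the horizontal edge of each $\Delta_i$ from Figure~\ref{fig:triang}, I would tabulate the left/right values $a_1^i,a_2^i$: the five triangles $\Delta_1,\Delta_2,\Delta_3,\Delta_6,\Delta_9$ have both horizontal-edge endpoints valued $0$, so $a_1^i=a_2^i$ and these lie in $\mathcal{A}_1$; the two triangles $\Delta_5,\Delta_8$ have horizontal edge running from $q_{10}$ (value $a$) to a boundary vertex (value $0$), so $a_1^i>a_2^i$ and these lie in $\mathcal{A}_2$; and the two triangles $\Delta_4,\Delta_7$ have horizontal edge from a boundary vertex (value $0$) to $q_{10}$ (value $a$), so $a_1^i<a_2^i$ and these lie in $\mathcal{A}_3$. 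This yields $\#\mathcal{A}_1=5$, $\#\mathcal{A}_2=2$, $\#\mathcal{A}_3=2$.

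Next I would compute $B$ and $D$. Since every relevant difference $|a_1^i-a_2^i|$ equals $|a|$ for $i\in\mathcal{A}_2\cup\mathcal{A}_3$, all ratios appearing in the definitions collapse to $1$: thus $D=\min_{k,i\in\mathcal{A}_3}\frac{|a_1^k-a_2^k|}{|a_1^i-a_2^i|}=1$, while $B_i=\max_{k\in\mathcal{A}_3}\frac{|a_1^i-a_2^i|}{|a_1^k-a_2^k|}=1$ for each $i\in\mathcal{A}_2$, giving $B=\frac{1}{1+\max_i B_i}=\frac12$. Substituting $N=3$, $B=\frac12$, $D=1$ into the parameter box $(\frac{1}{N},1)^{\#\mathcal{A}_1}\times(\frac{1}{NB},1)^{\#\mathcal{A}_2}\times(\frac{1}{ND},1)^{\#\mathcal{A}_3}$ of Theorem~\ref{Dimalmost} produces $(\frac13,1)^5\times(\frac23,1)^2\times(\frac13,1)^2$.

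Finally I would match the set-indexed factors to the coordinate indices: the $\mathcal{A}_1$ and $\mathcal{A}_3$ factors (both of the form $(\frac13,1)$) correspond to the seven parameters $s_1,s_2,s_3,s_4,s_6,s_7,s_9$, and the $\mathcal{A}_2$ factors (of the form $(\frac23,1)$) correspond to $s_5,s_8$; the dimension formula then follows by putting $N=3$ in the conclusion of Theorem~\ref{Dimalmost}. I expect no serious difficulty here; the only point requiring genuine care is the orientation bookkeeping in the second step---correctly identifying, for each up- and down-pointing triangle, which endpoint of its horizontal edge is the ``left'' one carrying $a_1^i$---since an error there would misassign triangles between $\mathcal{A}_2$ and $\mathcal{A}_3$. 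By the symmetry of $B$ and $D$ under the swap $a\leftrightarrow-a$, however, the resulting ranges are unaffected, so the conclusion is robust.
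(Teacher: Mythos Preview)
Your proposal is correct and follows exactly the paper's approach: the paper proves the Corollary by the single sentence preceding it, namely that for $N=3$ one has $\#\mathcal{A}_1=5$, $\#\mathcal{A}_2=2$, $\#\mathcal{A}_3=2$, $B=\tfrac12$, $D=1$, whereupon Theorem~\ref{Dimalmost} gives the result. Your three-step plan (partition the nine triangles, compute $B$ and $D$, translate the parameter box) carries out precisely this computation in detail, and your assignment $\mathcal{A}_1=\{1,2,3,6,9\}$, $\mathcal{A}_2=\{5,8\}$, $\mathcal{A}_3=\{4,7\}$ agrees with the explicit $W_i$ listed in the paper.
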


 Our second main result gives the Hausdorff dimension of the FIS in the case of $N=3$ for every parameter in a certain region.
 \begin{theorem}\label{main2}
		Let $s_i\in (\frac{2}{3},1)$ for every $i\in \{1,2,\dots,9\}$. If $\max\{s_{5},s_{8}\}\leq \min \{s_{4},s_{7}\}$ and $s_2\leq s_9$, then $$\dim_{H}(G(f^*))=\dim_{B}(G(f^*))=1+\frac{\log(\sum_{i=1}^{9}s_i)}{\log(3)}.$$
	\end{theorem}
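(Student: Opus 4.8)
The plan is to establish the two inequalities $\dim_H(G(f^*)) \ge 1 + \frac{\log(\sum_{i=1}^{9} s_i)}{\log 3}$ and $\dim_B(G(f^*)) \le 1 + \frac{\log(\sum_{i=1}^{9} s_i)}{\log 3}$; since $\dim_H \le \dim_B$ always holds, these force equality throughout. For the upper bound I would first record the singular value structure of the products $A_I = A_{i_1}\cdots A_{i_n}$ of the linear parts of the $W_i$. Each horizontal block is $\tfrac13$ times a signed diagonal matrix, hence $3^{-n}$ times an orthogonal matrix after $n$ steps, while the invariant vertical line $\langle e_3\rangle$ contracts by $s_I := \prod_j s_{i_j}$. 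Because $s_i \in (2/3,1) \subset (1/3,1)$, a direct computation of $A_I A_I^{T}$ shows the ordered singular values of $A_I$ are $\alpha_1(A_I) \asymp s_I$ and $\alpha_2(A_I)=\alpha_3(A_I)=3^{-n}$, so that $\phi^t(A_I) \asymp s_I\,3^{-n(t-1)}$ for $t\in[2,3]$. The pressure equation $\sum_{|I|=n}\phi^t(A_I)\asymp 1$ then identifies the affinity dimension as $1 + \frac{\log(\sum_i s_i)}{\log 3}$, and Falconer's bound (box dimension $\le$ affinity dimension) closes the upper estimate.

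For the lower bound I would work with the self-affine Bernoulli measure $\mu$ of weights $p_i = s_i/\sum_j s_j$. The Lyapunov exponents of the system are $\chi^{(1)} = -\sum_i p_i\log s_i < \chi^{(2)}=\chi^{(3)}=\log 3$, and the corresponding Lyapunov dimension equals $1 + \frac{\sum_i p_i\log(s_i/p_i)}{\log 3}$, which is maximised precisely at this choice of $p$ and attains the value $1 + \frac{\log(\sum_i s_i)}{\log 3}$. It therefore suffices to prove that $\mu$ realises its Lyapunov dimension, $\dim_H\mu = \dim_L\mu$, for then $\dim_H(G(f^*))\ge\dim_H\mu$ gives the bound. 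Since the IFS is reducible (the line $\langle e_3\rangle$ is invariant and attracting in $\mathbb{RP}^2$), Rapaport's theorem does not apply, so I would instead establish a Ledrappier--Young type formula adapted to this partially conformal, reducible setting, lifting the strategy of B\'ar\'any--Rams--Simon for one-dimensional FIFs by one dimension. In that formula the horizontal projection $\pi_{xy}\mu$ contributes its full dimension $H(p)/\log 3$, attained because the similarity IFS $\{U_i\}$ tiles $\Delta$ and hence satisfies the OSC, while the remaining contribution comes from the slow (vertical) direction; the formula then yields $\dim_H\mu=\dim_L\mu$ provided the associated transverse measure carries no dimension defect.

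The crux is to rule out this defect, and here the associated Furstenberg IFS enters: it is the random IFS on the vertical fibre, with maps $z\mapsto s_i z + \tau_i(\omega)$ whose translations $\tau_i(\omega)$ are driven by the horizontal base coding $\omega$, and whose stationary measure is exactly the relevant conditional (Furstenberg) measure. I would introduce its overlapping number, the maximal number of comparable level-$n$ cylinders meeting a fixed ball, and show it equals $1$ under the hypotheses; this is an exponential-separation/open-set condition at the Furstenberg level and removes the defect. This is precisely where the explicit inequalities are used: the common range $s_i\in(2/3,1)$, together with $\max\{s_5,s_8\}\le\min\{s_4,s_7\}$ (comparing the sheared $\mathcal{A}_2$ maps $W_5,W_8$ with the reflected $\mathcal{A}_3$ maps $W_4,W_7$) and $s_2\le s_9$ (comparing the two sheared $\mathcal{A}_1$ maps), force a monotone ordering of the vertical scales of the sheared maps under which the corresponding fibre cylinders nest without overlapping.

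I expect two genuinely hard points. The first is making the Ledrappier--Young reduction rigorous in the reducible case: the invariant line forces a careful disintegration, and one must verify that no cancellation occurs in the slow direction so that the conditional measure dimension combines with $H(p)/\log 3$ as predicted. The second, and in my view the main obstacle, is the explicit verification that the overlapping number equals $1$ for \emph{every} parameter in the stated region rather than merely almost every one. This is a concrete but delicate geometric bookkeeping of how the nine vertical images stack over the triangulation $\{\Delta_i\}$, in which the displayed inequalities between the $s_i$ are indispensable for guaranteeing the required non-overlapping; I anticipate this combinatorial--geometric estimate to be where the bulk of the work lies.
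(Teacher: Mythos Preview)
Your upper bound argument is correct and matches the paper. The lower bound, however, rests on two misconceptions that would derail the approach.

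First, you assert that ``Rapaport's theorem does not apply'' because the system is reducible, and therefore propose to develop a Ledrappier--Young formula from scratch. In fact the paper invokes a different result of Rapaport (from \cite{Rapa2018}, stated here as Theorem~\ref{thm:rapaport}) which is tailored precisely to block-triangular systems of the form \eqref{eq:mxtype}: it says that under SOSC, if the Furstenberg measure $\mu_F$ satisfies $\dim_H\mu_F>3-t_0$ then $\dim_HK=t_0$. You are confusing this with the 2024 result \cite{Rapa2024} for strongly irreducible proximal systems. No Ledrappier--Young machinery needs to be built.

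Second, you misidentify the Furstenberg IFS. It is not a random IFS on the vertical fibre with coding-dependent translations; it is the deterministic self-similar IFS $\mathcal{J}=\{h_i\}_{i=1}^9$ on $\mathbb{R}^2$ defined by \eqref{eq:furst}, and $\mu_F$ is its stationary measure with weights $p_i=s_i(1/3)^{t_0-1}$. The paper does not attempt to show the overlapping number equals $1$ (which would fail, since several $h_i$ share the fixed point $0$); it shows that every point of the attractor is missed by at least three of the nine first-level cylinders. This is established by projecting $\mathcal{J}$ onto each coordinate axis (Lemmas~\ref{Xaxis} and \ref{Yaxis}), and it is exactly here that $s_i>2/3$, $\max\{s_5,s_8\}\le\min\{s_4,s_7\}$, and $s_2\le s_9$ are used to separate the relevant one-dimensional images. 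Proposition~\ref{prop:covering} then gives $\dim_H\mu_F\ge \log(1-3p_{\min})/\log(1/3s_{\max})$, and an elementary inequality shows this exceeds $3-t_0$.
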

    
    \subsection{Geronimo-Hardin FISs} Next, we consider the construction of the FISs given by Geronimo and Hardin \cite{GH1993}. In this construction, the data points on the boundary of the triangle $\Delta$ do not need to be coplanar but need to take uniform scaling parameters. Consider the equilateral triangle $\Delta$ with vertices $\{q_{1}=(0,0), q_{2}=(1,0), q_{3}=(\frac{1}{2},\frac{\sqrt{3}}{2})\}.$ We take the triangulation $\{\Delta_i\}_{i=1}^{4}$ as shown Figure~\ref{fig:GH}.

    \begin{figure}[h!]
	\begin{center}
		\begin{tikzpicture}
			\draw[ultra thick, red](-2,0)--(2,0)--(0,sqrt 12)--(-2,0);
			\node[below] at (-2,0) {$q_1$};
			\node[below] at (2,0) {$q_2$};
			\node[above] at (0,sqrt 12) {$q_3$};
			\draw[ultra thick, red](0,0)--(-1, sqrt 3)--(1,  sqrt 3)--(0,0);
			\node[below] at (0,0) {$q_4$};
			\node[left] at (-1, sqrt 3) {$q_5$};
			\node at (-1,0.5) {1};
			\node at (0,1.2) {4};
			\node at (1,0.5) {2};
			\node at (0,2.5) {3};
		\node[right] at (1, sqrt 3) {$q_6$};
			\node[above left] at (-2,0) {$\color{blue}(1)$};
			\node[above right] at (2,0) {$\color{green}(2)$};
			\node[above] at (0,0) {$\color{violet}(3)$};
			\node[above left] at (-1, sqrt 3) {$\color{green}(2)$};
			\node[above right] at (1, sqrt 3) {$\color{blue}(1)$};		
			\node[left] at (0, sqrt 12) {$\color{violet}(3)$};
		\end{tikzpicture}  
	\end{center}
    \caption{Triangularization in the Geronimo-Hardin construction.}\label{fig:GH}
    \end{figure}
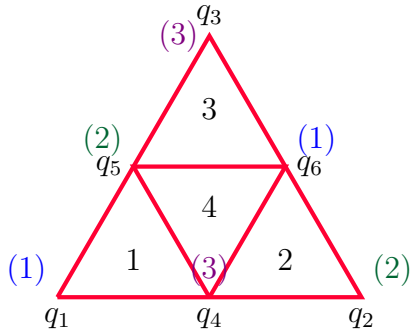
	We consider a data set as follows $$\{(q_{1},0),(q_{2},0),(q_{3},0),(q_{4},a),(q_{5},a),(q_{6},a)\},$$ where $a\ne 0$ is a real number, and the basic data function $f^*(q_i)=0$ if $i=1,2,3$ and $f^*(q_i)=a$ if $i=4,5,6$. The chromatic number for this graph is $3$.  We can label the vertices with three colours (blue, green, violet). For each $i\in \{1,2,3,4\}$, the similarity map $U_i: \Delta \to \Delta_{i}$ is defined such that the map $U_i$ maps the vertex of $\Delta$ of a color (blue or green or violet) to the vertex of same color (blue or green or violet) of $\Delta_i$. And, for each $i\in \{1,2,3,4\}$, the height function $V_i: \Delta \times \mathbb{R}\to \mathbb{R}$ is defined by 
	$V_i(x,y,z)=\bold{a}_i x+ b_i y+ s z+ c_i$, where $s\in (0,1)$ and constants $\{\bold{a}_i, b_i, c_i\}$ are uniquely determined by the join-up condition i.e. $V_i(q_1,0)=f^*(U_i(q_1)),V_i(q_2,0)=f^*(U_i(q_2))$ and $V_i(q_3,0)=f^*(U_i(q_3))~\forall~ i\in \{1,2,\dots,4\}$. We define the self-affine IFS $\mathcal{I}:=\{W_i,i\in \{1,2,\dots,4\}\},$ where the map $W_i: \mathbb{R}^3 \to \mathbb{R}^3$ is defined as $$W_i(x,y,z)=(U_i(x,y), V_i(x,y,z)).$$ By \cite{GH1993}, there exists a unique continuous function $f^*: \Delta \to \mathbb{R}$ such that the function $f^*$ interpolates the data sets and the graph $( G({f^*}) )$ of the function $f^{*}$ is the attractor of the affine IFS $\mathcal{I}.$ Precisely, the maps in the IFS $\mathcal{I}$ are as follows:
         \begin{align*}
     &W_1(x,y,z)=\begin{bmatrix}  \frac{1}{4} &\frac{\sqrt 3}{4} & 0 \\ \frac{\sqrt 3}{4} & \frac{-1}{4} & 0\\ a & \frac{a}{\sqrt 3} & s \end{bmatrix} \begin{pmatrix} x \\ y \\ z,  \end{pmatrix},~  W_2(x,y,z)=\begin{bmatrix}  \frac{1}{4} & \frac{-\sqrt 3}{4} & 0 \\ \frac{-\sqrt 3}{4} & \frac{-1}{4} & 0\\ -a & \frac{a}{\sqrt{3}} & s \end{bmatrix} \begin{pmatrix} x \\ y \\ z  \end{pmatrix}+\begin{pmatrix} \frac{3}{4}\\ \frac{\sqrt 3}{4} \\ a  \end{pmatrix},\\&
		W_3(x,y,z)=\begin{bmatrix}  \frac{-1}{2} & 0 & 0 \\ 0 & \frac{1}{2} & 0\\ 0 & \frac{-2a}{\sqrt{3}}& s \end{bmatrix} \begin{pmatrix} x \\ y \\ z  \end{pmatrix}+\begin{pmatrix} \frac{3}{4}\\ \frac{\sqrt 3}{4} \\ a  \end{pmatrix},~ W_4(x,y,z)=\begin{bmatrix}  \frac{-1}{2} & 0 & 0 \\ 0 & \frac{-1}{2} & 0\\ 0 & 0 & s \end{bmatrix} \begin{pmatrix} x \\ y \\ z  \end{pmatrix}+\begin{pmatrix} \frac{3}{4}\\ \frac{\sqrt 3}{4} \\ a  \end{pmatrix}.   
    \end{align*}

  \begin{figure}[h!]
	\begin{minipage}{0.6\textwidth}
\includegraphics[width=1.0\linewidth]{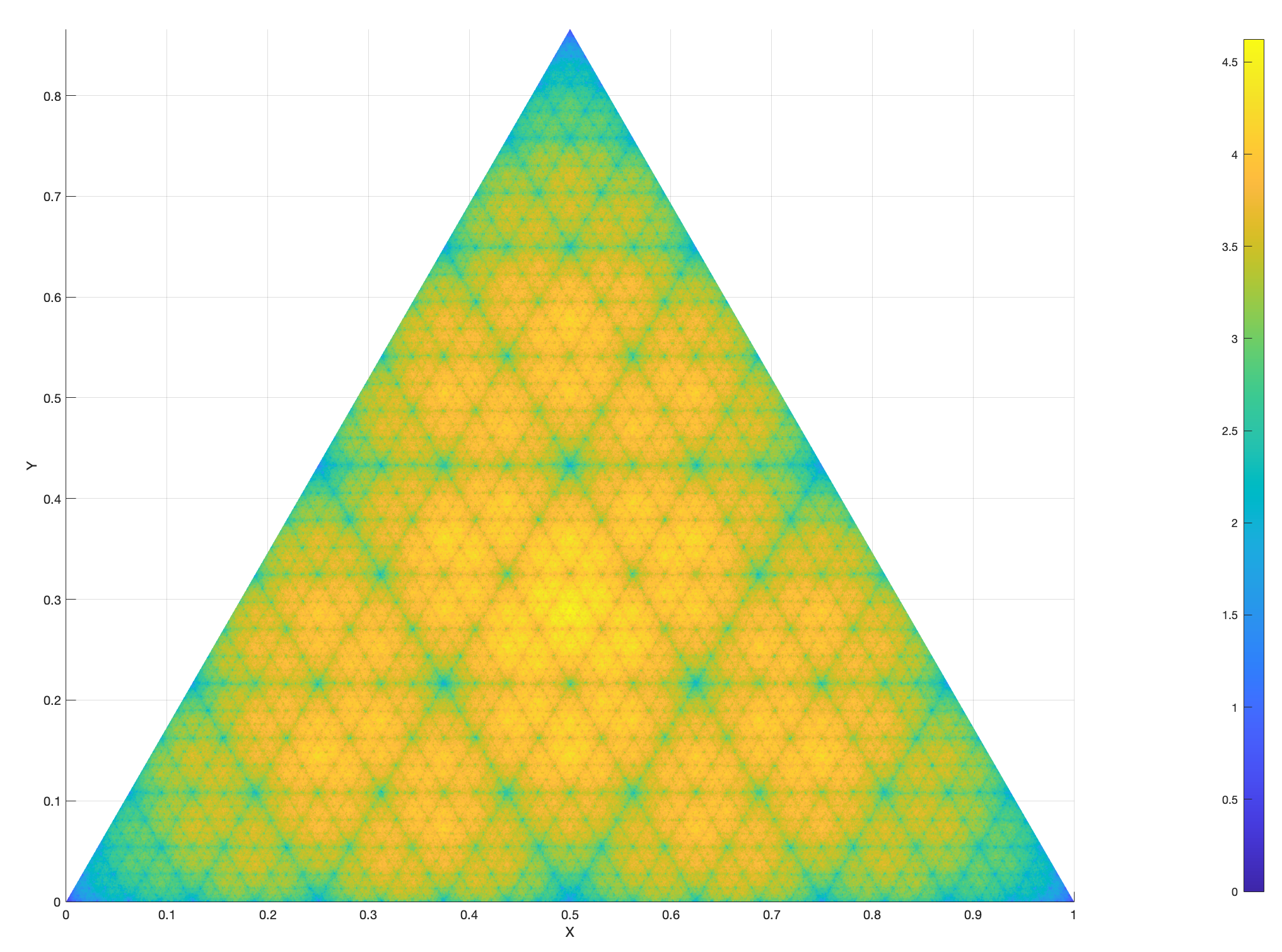}
	\end{minipage}\hspace*{0.2mm}
	\begin{minipage}{0.4\textwidth}\includegraphics[width=1.0\linewidth]{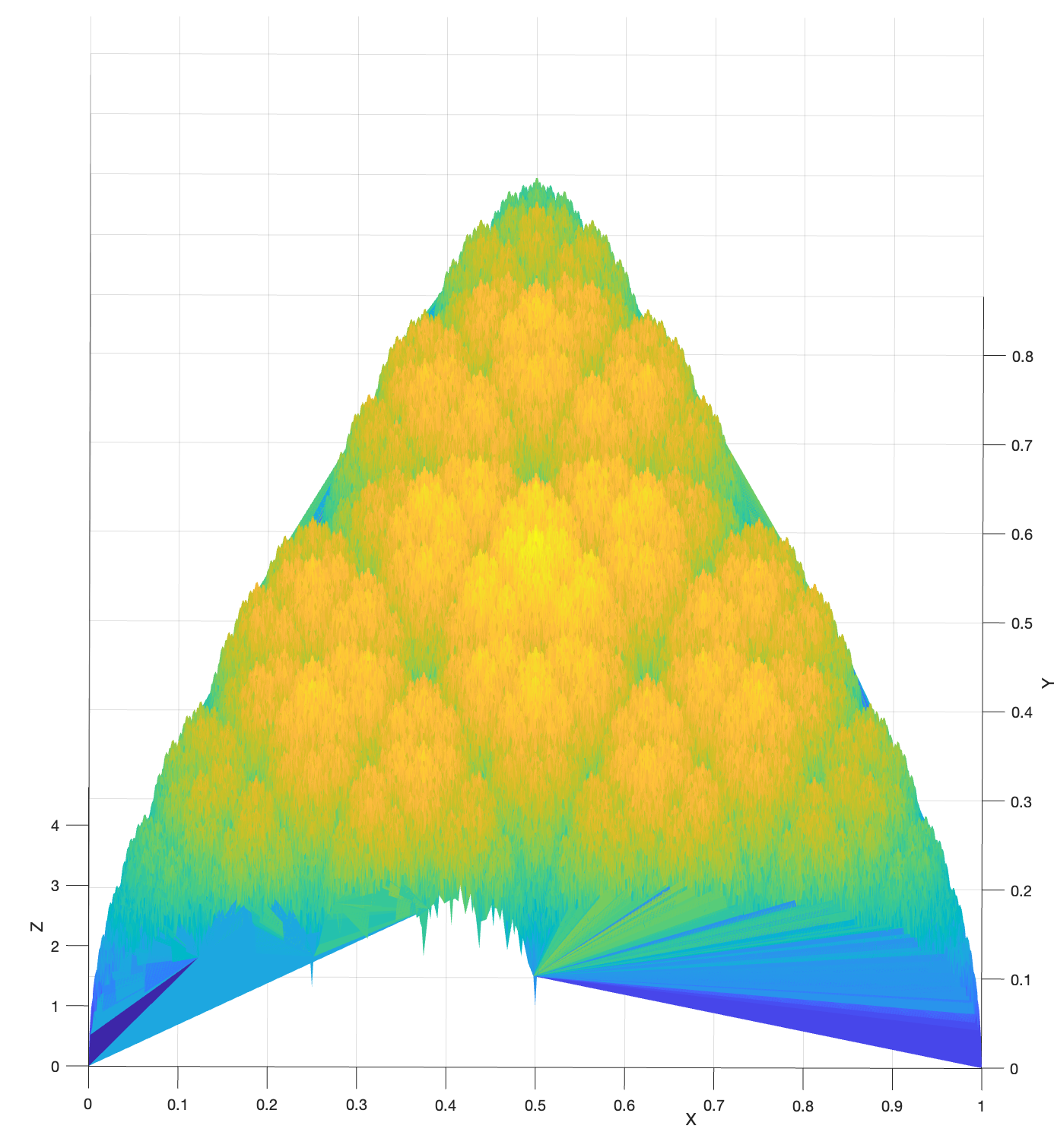}
	\end{minipage}	
    \caption{Graph of the fractal surfaces (top  \& aerial view) with parameters $s=0.82$ and $a=1$}
    \end{figure}
First, by computing the overlapping number, we determine the dimension for every type of scaling parameter as follows:
\begin{theorem}\label{GH1}
		If $s\in \bigg[\frac{1+\sqrt 5}{4},1\bigg)$, then $$\dim_{H}(G(f^*))=\dim_{B}(G(f^*))=3+\frac{\log(s)}{\log(2)}.$$
	\end{theorem}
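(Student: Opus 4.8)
The plan is to identify the affinity dimension of $\mathcal{I}=\{W_i\}_{i=1}^{4}$ as $t:=3+\frac{\log s}{\log 2}$, to note that $\overline{\dim}_B(G(f^*))\le t$ is automatic, and to establish the matching lower bound $\dim_{H}(G(f^*))\ge t$ for every $s\ge\frac{1+\sqrt5}{4}$ by reducing, as in the proof of Theorem~\ref{main2}, to an overlapping-number estimate for the associated Furstenberg IFS on $\RP$. First I would exploit the block lower-triangular structure $A_i=\left[\begin{smallmatrix}L_i&0\\ v_i^{\mathsf T}&s\end{smallmatrix}\right]$ relative to $\mathbb{R}^3=\mathbb{R}^2_{xy}\oplus\mathbb{R}_z$: each planar block $L_i$ is a similarity of ratio $\tfrac12$, the line $\langle e_3\rangle$ is the common invariant subspace with eigenvalue $s$, and the induced quotient IFS is the similarity system $\{U_i\}$, which tiles $\Delta$ and satisfies the SOSC. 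For a word $\mathbf{i}$ of length $n$ one gets $A_{\mathbf{i}}=\left[\begin{smallmatrix}2^{-n}O_{\mathbf{i}}&0\\ w_{\mathbf{i}}^{\mathsf T}&s^n\end{smallmatrix}\right]$ with $O_{\mathbf{i}}$ orthogonal, and the recursion for $w_{\mathbf{i}}$ gives $\|w_{\mathbf{i}}\|\asymp s^n$ precisely because $2s>1$ in the range under consideration. Diagonalising $A_{\mathbf{i}}^{\mathsf T}A_{\mathbf{i}}$ then yields singular values $\alpha_1(A_{\mathbf i})\asymp s^n$ and $\alpha_2(A_{\mathbf i}),\alpha_3(A_{\mathbf i})\asymp 2^{-n}$ (the planar direction orthogonal to $w_{\mathbf i}$ is an exact $2^{-n}$-eigendirection). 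Substituting $\phi^{t}(A_{\mathbf i})=\alpha_1\alpha_2\alpha_3^{t-2}\asymp s^n 2^{-n(t-1)}$ into the pressure equation gives $\sum_{|\mathbf i|=n}\phi^{t}(A_{\mathbf i})\asymp (4s\,2^{1-t})^n$, so the affinity dimension is $t=3+\frac{\log s}{\log 2}\in(2,3)$ for $s\in(\tfrac12,1)$. Since the affinity dimension always bounds the upper box dimension of a self-affine set, this yields $\dim_{H}(G(f^*))\le\overline{\dim}_B(G(f^*))\le t$ for free.

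The subtlety of the lower bound is that, although the SOSC holds, the IFS is reducible, so Rapaport's theorem does not apply. Following the framework used for Theorem~\ref{main2}, I would work with the uniform Bernoulli measure and its push-forward $\mu$, whose Lyapunov dimension, computed from the exponents $-\log s<\log 2$ (the latter with multiplicity two) and entropy $\log 4$, equals exactly $t$, and then establish $\dim\mu=t$ through the reducible Ledrappier--Young decomposition along the invariant flag. The projection to the base carries the full value $2$ (the planar system has the OSC), while the remaining increment $t-2=\log_2(2s)$ is supplied by the conditional measures along the $\mathbf{i}$-dependent top Oseledets direction; because the two smaller singular values coincide, the distribution of that direction is governed by a Furstenberg measure supported on a one-parameter family, i.e.\ on $\RP$. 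The associated projective Furstenberg IFS has per-step contraction ratio equal to $\alpha_2/\alpha_1\asymp(2s)^{-1}$, with rotational parts dictated by the three coupling directions of $W_1,W_2,W_3$, which sit at $120^{\circ}$ to one another, and by the vanishing coupling of $W_4$.

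The crux is therefore to compute the overlapping number of this Furstenberg IFS. Its contraction ratio is $\tfrac{1}{2s}$, so the separation of compositions is governed by comparing $\tfrac{1}{2s}$ with the growth generated by the finite $120^{\circ}$-rotation structure; the critical coincidence is exactly the golden-ratio relation $\varphi^{-1}+\varphi^{-2}=1$, which holds at $\tfrac{1}{2s}=\tfrac{1}{\varphi}$, i.e.\ at $s=\frac{1+\sqrt5}{4}$, where $\varphi=\frac{1+\sqrt5}{2}$. For $s\ge\frac{1+\sqrt5}{4}$, equivalently $\tfrac{1}{2s}\le\tfrac{1}{\varphi}$, the contraction is strong enough that the Furstenberg IFS is exponentially separated, its overlapping number is one, and hence its stationary measure is exact-dimensional of dimension one. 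Feeding this back into the Ledrappier--Young formula makes the slot-$3$ conditional dimension attain its maximal value $\frac{\log(2s)}{\log 2}$, so $\dim\mu=t$ and therefore $\dim_{H}(G(f^*))\ge t$; combined with the upper bound this gives $\dim_{H}(G(f^*))=\dim_{B}(G(f^*))=t$.

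The hard part will be the overlapping-number computation: one must track all length-$n$ compositions of the projective maps through the finite rotation group generated by the $120^{\circ}$ couplings, show that for $\tfrac1{2s}\le\tfrac1\varphi$ distinct words never produce coincident (nor too-close) directions, and verify that the golden-ratio resonance is the unique obstruction, so that the threshold is sharp for this method. A secondary difficulty is the reducible Ledrappier--Young step itself: justifying rigorously that, once the Furstenberg measure on $\RP$ attains full dimension, the conditional measures along the top Oseledets direction realise the full increment $\log_2(2s)$. Here the OSC of the planar base and the inequality $2s>1$ (which forces $\|w_{\mathbf i}\|\asymp s^n$, so that the Oseledets direction is genuinely tilted away from both the vertical axis and the horizontal plane) are exactly what make the decomposition valid.
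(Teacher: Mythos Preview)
Your proposal has a structural misunderstanding of the framework the paper uses, and two of the key technical claims are incorrect.

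First, Rapaport's theorem \emph{does} apply here. You are confusing two results: Rapaport's 2024 theorem for strongly irreducible proximal systems in $\R^3$ (which indeed fails because the IFS is reducible) and Rapaport's 2018 theorem for block-triangular systems (Theorem~\ref{thm:rapaport} in the paper), which is precisely tailored to matrices of the form $\left[\begin{smallmatrix}\lambda U & 0\\ a^{\mathsf T}& s\end{smallmatrix}\right]$ under SOSC. The paper applies this theorem directly; there is no need for a bespoke reducible Ledrappier--Young argument, and the ``secondary difficulty'' you flag simply does not arise.

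Second, the Furstenberg IFS lives on $\R^2$, not on $\RP$. It is given explicitly by \eqref{eq:furst}: four similarities $h_i(x)=\tfrac{1}{2s}Q_i x + t_i$ on the plane, with $Q_i$ orthogonal. Because the $Q_i$ include $120^{\circ}$ rotations, this planar IFS does \emph{not} project to an IFS on any line, so your one-dimensional picture cannot be made to work here. The Furstenberg measure $\mu_F$ is a planar self-similar measure, and what must be shown is only the inequality $\dim_H\mu_F>3-t$.

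Third, your overlapping-number claim is wrong. The paper does not show exponential separation or overlapping number one; for $s\ge\tfrac{1+\sqrt5}{4}$ the four first-level cylinders of $\mathcal J$ still overlap heavily. What Proposition~\ref{overNUM2} actually establishes is that every point of the attractor is \emph{missed} by at least one of the four maps (overlapping number $\le 3$). This is done by exhibiting an invariant convex hexagon $\mathrm{Con}(ABCA'B'C')$ and checking that the three non-central images $h_1,h_2,h_3$ of it have no common point. The role of the golden ratio is purely this: the vertices $A_1',B_2',C_3'$ have coordinates proportional to $s(4s-2)-1$, and the threshold $s=\tfrac{1+\sqrt5}{4}$ is exactly where $4s^2-2s-1=0$, i.e.\ where these three images just touch at the origin. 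Feeding $Q=1$, $p_{\min}=\tfrac14$, $\lambda_{\max}=\tfrac{1}{2s}$ into Proposition~\ref{prop:covering} gives $\dim_H\mu_F\ge\frac{\log(3/4)}{-\log(2s)}$, and an elementary inequality (using $4s\ge 1+\sqrt5$) shows this exceeds $3-t=-\frac{\log s}{\log 2}$. Then Theorem~\ref{thm:rapaport} finishes the proof.
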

In this case, we also determine the dimension for typical scaling parameters as follows:
\begin{theorem}\label{GH2}
   There exists a set $\mathcal{E}\subset (\frac{1}{2},1)$ with $\dim_{H}(\mathcal{E})=0$ such that 
   $$\dim_{H}(G(f^*))={\dim}_{B}(G(f^*))=3+\frac{\log(s)}{\log(2)}\quad \forall~ s\in \bigg(\frac{1}{2},1\bigg)\setminus \mathcal{E}.$$
\end{theorem}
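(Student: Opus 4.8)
The plan is to sandwich $\dim_H(G(f^*))$ between the box dimension from above and the dimension of a self-affine measure from below, and to show that the two values meet off a set of parameters of zero Hausdorff dimension. The upper bound $\dim_H(G(f^*))\le \dim_B(G(f^*))\le 3+\frac{\log s}{\log 2}$ comes from a direct covering count: the four sub-triangles tile $\Delta$ and are scaled by $\tfrac12$, while the vertical extent of every level-$n$ cylinder $W_{i_1}\cdots W_{i_n}(G(f^*))$ is comparable to $s^{\,n}$; hence $G(f^*)$ is covered by $\asymp (8s)^n$ cubes of side $2^{-n}$, giving $\log_2(8s)=3+\frac{\log s}{\log 2}$. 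For the matching lower bound I would study the self-affine measure $\mu=\Pi_*\beta$, where $\beta$ is the uniform Bernoulli measure on $\{1,2,3,4\}^{\mathbb N}$ and $\Pi$ the coding map; since $\mu$ is supported on $G(f^*)$, it suffices to prove $\dim_H\mu = 3+\frac{\log s}{\log 2}$ for all $s$ outside a set $\mathcal E$ with $\dim_H\mathcal E=0$.

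The candidate value is the Lyapunov dimension of $\mu$. Each linear part has the block lower-triangular form $A_i=\left(\begin{smallmatrix}L_i&0\\ b_i^{\mathsf T}&s\end{smallmatrix}\right)$ with $L_i\in\tfrac12\,O(2)$, so the vertical axis $\langle e_3\rangle$ is a common invariant subspace and the Lyapunov spectrum of $\mu$ is $\chi_1=\log(1/s)<\chi_2=\chi_3=\log 2$; with entropy $\log 4$ a short computation returns Lyapunov dimension $3+\frac{\log s}{\log 2}$. Since the base system $\{U_i\}$ consists of similarities of ratio $\tfrac12$ whose images tile $\Delta$, it satisfies the open set condition and its projected measure already has the full dimension $2$, so no entropy can be lost in the two (degenerate) base directions. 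Consequently the only possible dimension deficit of $\mu$ is carried by the dominant vertical direction --- precisely the reducibility that Rapaport's strongly irreducible $\mathbb R^3$ theorem cannot treat and that forces us to argue by hand.

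The heart of the argument is a Ledrappier--Young / dimension-conservation formula adapted to this reducible system, which reduces the equality $\dim_H\mu=3+\frac{\log s}{\log 2}$ to a separation property of the associated Furstenberg IFS --- the self-similar system describing how the graph spreads in the dominant vertical direction, equivalently the evolution of the tangent planes. A short computation shows that under $W_i$ a gradient $\nabla$ is sent to $2s\,O_i\nabla+2O_i b_i$, a conformal map, so this Furstenberg IFS is self-similar with defining data rational in $s$ (and $a$). Through the formula I would show that $\mu$ attains its Lyapunov dimension whenever the Furstenberg IFS has maximal entropy, i.e. when exact overlaps are absent and exponential separation holds. It is here that Theorem~\ref{GH1} enters: the control of the overlapping number on $[\tfrac{1+\sqrt5}{4},1)$ both establishes the theorem on that range and certifies that the one-parameter family of Furstenberg IFSs is not identically overlapping.

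To pass from one subinterval to a statement valid off a zero-dimensional set I would invoke Hochman's inverse theorem for the entropy of self-similar measures together with its parametric corollary: for a non-degenerate real-analytic family of self-similar IFSs the set of parameters for which exponential separation fails (equivalently, for which a super-exponential concentration of cylinders occurs) has Hausdorff dimension $0$. Applied to the real-analytic family indexed by $s\in(\tfrac12,1)$, this produces the exceptional set $\mathcal E$ with $\dim_H\mathcal E=0$, and combined with the previous steps yields $\dim_H(G(f^*))=\dim_B(G(f^*))=3+\frac{\log s}{\log 2}$ for every $s\in(\tfrac12,1)\setminus\mathcal E$. The main obstacle I anticipate is the reduction step itself: because the system is reducible one cannot quote an off-the-shelf $\mathbb R^3$ dimension formula, so the Ledrappier--Young decomposition must be installed directly, the conformal tiling base must be shown to contribute its full dimension with no entropy loss, and Hochman's non-degeneracy hypothesis must be verified by proving that exact overlaps in the Furstenberg family occur only on the common zero set of nontrivial real-analytic functions of $s$.
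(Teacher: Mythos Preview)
Your overall architecture --- upper bound by affinity dimension, lower bound via Rapaport's criterion (Theorem~\ref{thm:rapaport}) reducing to $\dim_H\mu_F>3-t$, and a parametric Hochman argument to secure the latter off a zero-dimensional exceptional set --- is exactly the paper's. The gap is in how you propose to invoke Hochman. The Furstenberg IFS $\mathcal J=\{h_1,\ldots,h_4\}$ in \eqref{eq:furstgh} lives on $\mathbb R^2$ and its orthogonal parts $Q_1,\ldots,Q_4$ are four \emph{distinct} reflections generating a group of order $12$; because these mix the coordinates, no single linear projection of $\mathcal J$ to $\mathbb R$ is itself self-similar, so Hochman's one-dimensional parametric theorem \cite[Theorem~1.10]{Hochman2014} does not apply directly, and you do not say which higher-dimensional statement you would use or how its hypotheses would be verified. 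The paper's workaround is the point of the whole section: pass to the sub-IFS $\Phi_n=\{W_\tau:\tau\in\{1,2,3,4\}^{2n},\ Q_\tau=\mathrm{Id}\}$, whose Furstenberg system has \emph{trivial} orthogonal part and therefore projects to an honest one-dimensional self-similar IFS in direction $v=(1,1)$. A Markov-chain count on the finite group (the matrix $P$, its square $R$, and Perron--Frobenius) gives $\#\Phi_n\ge 4^{2n}/12$ for large $n$, so the affinity dimension $t_n$ of $\Phi_n$ satisfies $t_n\to t$; Hochman in one dimension then yields $\dim_H(A_{\Phi_n})=t_n$ off a set $\mathcal E_n$ with $\dim_H\mathcal E_n=0$, and $\mathcal E=\bigcup_n\mathcal E_n$ finishes the proof.

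Your appeal to Theorem~\ref{GH1} to certify non-degeneracy is also misplaced. Theorem~\ref{GH1} bounds an overlapping number on the sub-interval $[\tfrac{1+\sqrt5}{4},1)$; this gives a lower bound on $\dim_H\mu_F$ there but says nothing about whether $h_\tau\equiv h_{\tau'}$ as functions of $s$ for some pair of distinct words, which is what non-degeneracy means. The paper verifies non-degeneracy by a direct computation in the associated graph-directed system: one checks $v_l\cdot t_{k_1}\ne v_l\cdot t_{k_2}$ for every vertex $l$ and every $k_1\ne k_2$, whence distinct edge paths give natural projections that differ as analytic functions of $s$ (the leading coefficient at the first discrepancy is nonzero). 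This calculation, not Theorem~\ref{GH1}, is what feeds into Hochman's hypothesis.
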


	\section{Preliminaries}\label{sec:prelim}

    First, we go through some basic definitions and tools we intend to use.

    \subsection{Dimension concepts}
	\begin{definition}
		Let $F\subseteq \mathbb{R}^d$. We say that $\{U_{i}\}$ is a $\delta$-cover of $F$ if $F\subset \bigcup\limits_{i=1}^{\infty}U_{i}$ and $0 <|U_{i}|\leq \delta$ for each ${i}$, where $|U_i|$ denotes the diameter of the set $U_i$.
		For each $\delta>0$ and $s\geq 0$, we define
		$$H_{\delta}^{s}(F)=\inf\Big\{\sum_{i=1}^{\infty}|U_{i}|^{s} : \{U_{i}\} \text{ is a }\delta\text{-cover of }F\Big\} \quad \text{and} \quad H^{s}(F)=\lim_{\delta\to 0+}H_{\delta}^{s}(F).$$
		We call $H^{s}(F)$ the $s$-\texttt{dimensional Hausdorff measure} of the set $F$. Using this, the \texttt{Hausdorff dimension} of the set $F$ is defined by 
		$$\dim_{H}(F)=\inf\{s\geq 0 : H^{s}(F)=0\}=\sup\{s \geq 0 : H^{s}(F)=\infty\}.$$
	\end{definition}
	\begin{definition}
		The \texttt{box dimension} of a non-empty bounded subset $F$ of $(X,d)$ is defined as
		$$\dim_{B}F=\lim_{\delta \to 0}\frac{\log{N_{\delta}(F)}}{-\log\delta},$$
		where $N_{\delta}(F)$ denotes the smallest number of sets of diameter at most $\delta$ that can cover $F,$ provided the limit exists. 
		If this limit does not exist, then the upper and lower box dimensions, respectively, are defined as$$\overline{\dim}_{B}F=\limsup_{\delta \to 0}\frac{\log{N_{\delta}(F)}}{-\log\delta}\text{ and }\underline{\dim}_{B}F=\liminf_{\delta \to 0}\frac{\log{N_{\delta}(F)}}{-\log\delta}.$$\end{definition}
        
    \subsection{Symbolic space} Let $\mathcal{I}=\big\{ f_1,f_2,\ldots,f_n \big\}$ be an IFS on $\mathbb{R}^d$ such that $\|f_i(x)-f_i(y)\|\leq r_i\|x-y\|$ with $r_i\in (0,1)$ for all $i\in \{1,2,\dots,n\}$. Let $\Sigma:=\{1,2,\dots,n\}^{\mathbb{N}}$ be the set of all infinite sequences with symbols from $\{1,2,\dots,n\}$. The set $\Sigma$ is the symbolic space corresponding to the IFS $\mathcal{I}=\{f_1,\ldots,f_n\}$. Let $\mathbf{i}=i_{1}i_{2}\dots \in \Sigma$. We define $\mathbf{i}_{|_m}:=i_{1}i_{2}\dots i_{m}$ for all $m\in \mathbb{N}$. We denote the set of all finite sequences of length $m$ with symbols from $\{1,2,\dots, n\}$ by $\Sigma_{m}$. Set $\Sigma^{*}:=\bigcup_{m=1}^\infty\Sigma_{m}.$ The notation $|\mathbf{i}|$ denotes the length of the finite sequence $\mathbf{i}\in \Sigma^*$. 
 The symbolic space $\Sigma$ equipped with metric $\rho$ is a compact metric space, where the metric $\rho$ is defined as follows
$$\rho(\mathbf{i},\mathbf{j})=2^{-|\mathbf{i}\wedge\mathbf{j}|}$$
for $\mathbf{i},\mathbf{j}\in \Sigma$, where $\mathbf{i}\wedge\mathbf{j}$ denotes the initial largest common segment of $\mathbf{i}$ and $\mathbf{j}$. 


\subsection{Affinity dimension and Furstenberg measure}
Let $A$ be a $d \times d$ real matrix. For $t\geq 0$, the \texttt{singular value function} $\Phi^{t}(A)$ of $A$ is defined by 
$$\Phi^{t}(A)=\begin{cases}
    \alpha_{1}\dots \alpha_{\lfloor t \rfloor}{\alpha_{\lceil t \rceil}}^{t- \lfloor t \rfloor}\quad \text{if}~0\leq t\leq d \\
    |\text{det}(A)|^{t/d} \quad \text{if}~t>d,
\end{cases}$$
where $\alpha_{1}\geq \alpha_{2}\geq \dots \geq \alpha_{d}$ are the singular values of $A$.\par 
The \texttt{affinity dimension} of the self-affine IFS $\mathcal{I}=\{f_i(x)=A_ix+a_i\}_{i=1}^{n}$ is defined by 
$$t_0:=\inf\bigg\{t>0: \sum_{m=1}^{\infty}\sum_{i_{1}\dots i_{m}\in \Sigma_m} \Phi^{t}(A_{i_1}\cdots A_{i_m})<\infty\bigg\}.$$

If the matrices $A_i$ have the block triangular form
\begin{equation}\label{eq:mxtype}
A_i=\begin{bmatrix}
    \lambda_iU_i & \underline{0}\\
    \underline{a}_i^T & s_i
\end{bmatrix}\text{ for every $i$},
\end{equation}
where $U_i$ are $2\times2$ orthogonal matrices, $0<\lambda_i<|s_i|<1$, $\underline{a}_i\in\R^2$, and $\mathcal{I}$ satisfies the SOSC then by \cite[Remark~2.6]{BRS2020}, the affinity dimension $t_0$ satisfies the equation $t_0=\min\{r_1,r_2\}$, where
\begin{equation}\label{eq:affindim}
\sum_{i=1}^n|s_i|^{r_1}=1\text{ and }\sum_{i=1}^n|s_i|\lambda_i^{r_2-1}=1.
\end{equation}
In particular, when $\sum_{i=1}^n\lambda_i^2=1$, then $t_0=r_2\in[2,3]$.

Following the lines \cite[Section~2.4]{BRS2020}, we define the corresponding Furstenberg IFS induced by the IFS $\mathcal{I}$ with matrices of the form \eqref{eq:mxtype} as follows:
\begin{equation}\label{eq:furst}
\mathcal{J}=\left\{h_i(\underline{x})=\frac{\lambda_i}{s_i}U_i^T\underline{x}-\frac{1}{s_i}\underline{a}_i\right\}_{i=1}^n.
\end{equation}
The results of Rapaport \cite[Section~1.2]{Rapa2018} and Feng \cite[Theorem~1.10]{Feng} give a sufficient condition to calculate the dimension of the attractor of $\mathcal{I}$, see \cite[Section~2.4]{BRS2020}. We state it in the special case we require throughout this paper.

\begin{theorem}\label{thm:rapaport}
    Let $$\mathcal{I}=\left\{f_i(\underline{x})=\begin{bmatrix}
    \lambda_iU_i & \underline{0}\\
    \underline{a}_i^T & s_i
\end{bmatrix}\underline{x}+a_i\right\}_{i=1}^{n}$$ be a self-affine IFS in $\R^3$ with attractor $K$ such that it satisfies the SOSC, $U_i$ are $2\times2$ orthogonal matrices, $\sum_{i=1}^n\lambda_i^2=1$ and $0<\lambda_i<|s_i|<1$. Let $\mu_F=\sum_{i=1}^n|s_i|\lambda_i^{t_0-1}(h_i)_*\mu_F$ be the Furstenberg measure corresponding to the IFS in \eqref{eq:furst}. If $\dim_H\mu_F\geq3-t_0$ then $\dim_HK=t_0$.
\end{theorem}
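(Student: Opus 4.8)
The statement is Rapaport's dimension theorem \cite{Rapa2018} specialized to the block-triangular cocycle \eqref{eq:mxtype}, so the plan is to exhibit the dominated splitting that places us inside the scope of \cite{Rapa2018} (cf.\ \cite[Section~2.4]{BRS2020}) and then read off the simplified conclusion. First I would record the elementary structural facts. Each $A_i$ fixes the vertical axis, $A_i\underline{e}_3=s_i\underline{e}_3$, and the induced action on the quotient $\R^3/\langle\underline{e}_3\rangle\cong\R^2$ is the conformal map $\lambda_iU_i$. Since $0<\lambda_i<|s_i|<1$, the vertical bundle is contracted strictly less than the horizontal one, so the cocycle admits a $1$-dimensional dominant (slow) subbundle together with a $2$-dimensional subdominant conformal subbundle. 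The distribution of the dominant line, written in the chart that records its ``slope vector'' in $\R^2$, is exactly the stationary measure $\mu_F$ of the Furstenberg IFS \eqref{eq:furst}: the maps $h_i$ have linear parts $\tfrac{\lambda_i}{s_i}U_i^T$ with $|\lambda_i/s_i|<1$, and the weights $|s_i|\lambda_i^{t_0-1}$ form a probability vector by the second equation in \eqref{eq:affindim}.

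The upper bound $\dim_H K\le t_0$ is Falconer's singular-value-function bound and needs no hypothesis. For the matching lower bound I would introduce the Bernoulli (K\"aenm\"aki) measure $\nu$ on $K$ with weights $p_i=|s_i|\lambda_i^{t_0-1}$. A direct computation gives the Lyapunov exponents
\[
\chi_1=-\sum_i p_i\log|s_i|<\chi_2=\chi_3=-\sum_i p_i\log\lambda_i
\]
and entropy $h=-\sum_i p_i\log p_i=\chi_1+(t_0-1)\chi_2$. Because $t_0\in(2,3)$ this places $h$ in the range $\chi_1+\chi_2<h<\chi_1+\chi_2+\chi_3$, and in that regime the Lyapunov (affinity) dimension of $\nu$ evaluates to $2+(h-\chi_1-\chi_2)/\chi_3=t_0$. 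Hence it suffices to prove $\dim_H\nu=t_0$.

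The remaining step is the exact-dimensionality and the Ledrappier--Young formula for self-affine measures with a dominated splitting, which is the substance of \cite{Rapa2018}. Under the SOSC the symbolic entropy is not lost, and the formula expresses $\dim_H\nu$ as the Lyapunov dimension minus a non-negative defect carried by the most contracted, subdominant direction; this defect is controlled by how spread out the dominant line is, i.e.\ by $\dim_H\mu_F$. Rewriting the break-even threshold in our normalization, $3-t_0=(\chi_1+2\chi_2-h)/\chi_2$, the hypothesis $\dim_H\mu_F>3-t_0$ is exactly the condition forcing the defect to vanish, whence $\dim_H\nu=t_0$ and therefore $\dim_H K\ge\dim_H\nu=t_0$, giving equality with the Falconer bound.

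The hard part is entirely inside this last step: establishing that $\nu$ is exact dimensional and that its dimension obeys the Ledrappier--Young formula in the mixed geometry of a dominant line plus a conformal $2$-plane (two equal subdominant exponents), together with the verification that the Furstenberg-dimension threshold is sharp. In applying \cite{Rapa2018} one must check its structural hypotheses in the present chart: that $\mu_F$ is exact dimensional and non-atomic, that the conformal subaction together with the translations $\underline{a}_i$ does not collapse $\mu_F$ onto a proper affine subspace, and that the SOSC delivers the full entropy $-\sum_i p_i\log p_i$ with no overlaps lowering the dimension. These verifications, rather than any new inequality, are where the real work lies.
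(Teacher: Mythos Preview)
Your proposal is a faithful high-level sketch of the strategy behind Rapaport's theorem, but note that the paper does not give a proof of this statement at all: Theorem~\ref{thm:rapaport} is stated as an external result, attributed to Rapaport \cite[Section~1.2]{Rapa2018} and specialized to the block-triangular setting following \cite[Section~2.4]{BRS2020}. There is nothing to compare against.

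That said, your outline is essentially the correct architecture of Rapaport's argument: the dominated splitting from $0<\lambda_i<|s_i|$, the identification of $\mu_F$ as the stationary measure on the slow direction, the choice of the K\"aenm\"aki measure with weights $p_i=|s_i|\lambda_i^{t_0-1}$ so that its Lyapunov dimension equals $t_0$, and the appeal to the Ledrappier--Young formula to turn the Furstenberg-dimension hypothesis into vanishing of the dimension defect. You are also right that the substantive work (exact-dimensionality, the Ledrappier--Young identity in this mixed $1+2$ conformal setting, and the role of SOSC in preserving entropy) is entirely inside \cite{Rapa2018} and is not reproduced here.
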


Let us point out that Rapaport \cite{Rapa2018} originally proved Theorem~\ref{thm:rapaport} under an irreducibility assumption and strong separation. However, these assumptions can be removed. Feng \cite{Feng} already removed the irreducibility assumption, but still assumed the strong separation. However, using the observation of B\'ar\'any and K\"aenm\"aki \cite[Corollary~2.8]{BK}, we get that the conditional entropy of any Bernoulli measure with respect to the natural projection remains zero under the strong open set condition; thus, the SSC can be replaced by SOSC in \cite[Theorem~1.10(b)]{Feng}.

Finally, we state a simple proposition to estimate the dimension of self-similar measures from below. {The bound is a simpler version of the overlapping number estimate by Mihailescu and Urba\'nski \cite[Theorem~2.5(b)]{MiUr} for random systems.}

\begin{proposition}\label{prop:covering}
    Let $\mathcal{J}=\{h_i(x)=\lambda_iU_ix+t_i\}_{i=1}^N$ be a self-similar IFS on $\R^d$ with attractor $K$ and let $(p_i)_{i=1}^N$ be a non-degenerate probability vector and let $\mu=\sum_{i=1}^Np_i(h_i)_*\mu$. If $\min_{x\in K}\#\{i\in\{1,\ldots,N\}:\ x\notin h_i(K)\}\geq Q$ then $$\dim_H\mu\geq\frac{\log(1-Qp_{\min})}{\log\lambda_{\max}},$$ where $\lambda_{\max}=\max_{i=1,\ldots,N}|\lambda_i|$ and $p_{\min}=\min_{i=1,\ldots,N}p_i$.
\end{proposition}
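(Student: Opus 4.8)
The plan is to bound the lower local dimension of $\mu$ from below and invoke the mass distribution principle: it suffices to show that for every $x\in K$
\[
\liminf_{r\to0}\frac{\log\mu(B(x,r))}{\log r}\ \ge\ \alpha:=\frac{\log(1-Qp_{\min})}{\log\lambda_{\max}},
\]
which yields $\dim_H\mu\ge\alpha$. Throughout write $\mathbb{P}$ for the Bernoulli measure on $\Sigma=\{1,\dots,N\}^{\mathbb N}$ with weights $(p_i)$ and $\pi\colon\Sigma\to K$ for the coding map, so that $\mu=\pi_*\mathbb P$ and $\mu=\sum_{\mathbf k\in\Sigma_m}p_{\mathbf k}(h_{\mathbf k})_*\mu$ for every $m$, where $p_{\mathbf k}=p_{k_1}\cdots p_{k_m}$ and $h_{\mathbf k}=h_{k_1}\circ\cdots\circ h_{k_m}$.

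The heart of the argument is a one–step contraction estimate for the symbolic mass of the cylinders passing through a point. For $x\in K$ put $S(x)=\{i:\ x\in h_i(K)\}$ and $\mu_m(x)=\sum_{\mathbf j\in\Sigma_m,\ x\in h_{\mathbf j}(K)}p_{\mathbf j}$. The hypothesis says $\#\{i:\ x\notin h_i(K)\}\ge Q$, hence $\sum_{i\in S(x)}p_i=1-\sum_{i\notin S(x)}p_i\le 1-Qp_{\min}$ for every $x$. Splitting a cylinder according to its first symbol, $x\in h_{i\mathbf j''}(K)$ iff $i\in S(x)$ and $h_i^{-1}(x)\in h_{\mathbf j''}(K)$, so that
\[
\mu_m(x)=\sum_{i\in S(x)}p_i\,\mu_{m-1}\!\big(h_i^{-1}(x)\big)\ \le\ \Big(\sum_{i\in S(x)}p_i\Big)\sup_{z\in K}\mu_{m-1}(z)\ \le\ (1-Qp_{\min})\sup_{z\in K}\mu_{m-1}(z).
\]
Taking the supremum over $x$ and iterating from $\mu_0\equiv1$ gives the clean bound $\sup_{x\in K}\mu_m(x)\le(1-Qp_{\min})^m$. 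This is exactly where the overlapping number $Q$ enters, and it is sharp in the separated case, where it reduces to $p_{\mathbf j}\le(1-p_{\min})^m$.

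It remains to convert this symbolic estimate into a bound on $\mu(B(x,r))$. Given small $r$, I would replace the fixed generation by a Moran stopping set $\Xi_r$ of shortest words $\mathbf k$ with $\mathrm{diam}\,h_{\mathbf k}(K)\le r$, on which all cylinders have comparable diameter $\asymp r$ and $\mu=\sum_{\mathbf k\in\Xi_r}p_{\mathbf k}(h_{\mathbf k})_*\mu$. Every $\mathbf k\in\Xi_r$ has length at least $m_1\asymp\log r/\log\lambda_{\max}$, and grouping by the length-$m_1$ prefix gives $\sum_{\mathbf k\in\Xi_r,\ z\in h_{\mathbf k}(K)}p_{\mathbf k}\le\mu_{m_1}(z)\le(1-Qp_{\min})^{m_1}$, so the contraction estimate survives passage to $\Xi_r$. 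Using $(h_{\mathbf k})_*\mu(B(x,r))\le1$ and that only cylinders meeting the ball contribute,
\[
\mu(B(x,r))\ \le\ \sum_{\mathbf k\in\Xi_r,\ h_{\mathbf k}(K)\cap B(x,r)\ne\emptyset}p_{\mathbf k}.
\]
A covering/charging argument then bounds the right–hand side by $C(1-Qp_{\min})^{m_1}$ with $C$ depending only on $d$ and $\lambda_{\min}$, and since $m_1\asymp\log r/\log\lambda_{\max}$ this gives $\log\mu(B(x,r))/\log r\ge\alpha-o(1)$, as required.

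The main obstacle is precisely this last geometric step. The recursion controls the mass of cylinders that literally contain a given point, but $B(x,r)$ also meets many cylinders that approach $x$ without containing it; because the maps overlap, the $\mu$–measure of a genuine neighbourhood $\bigcup_{x\in h_{\mathbf j}(K)}h_{\mathbf j}(K)$ of $x$ need \emph{not} be bounded by $\mu_m(x)$, and one can build examples where it exceeds $(1-Qp_{\min})^m$. Thus one cannot simply compare $\mu(B(x,r))$ with $\mu_m(x)$; the cylinders meeting $B(x,r)$ must be regrouped, via the bounded overlap of the comparable-size Moran cover, into boundedly many families each controlled by $\sup_z\mu_{m_1}(z)$. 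The saving grace is that only the exponential rate enters $\alpha$, so the constant multiplicity factor $C$ produced by this regrouping is harmless.
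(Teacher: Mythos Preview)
Your route through the symbolic masses $\mu_m(x)$ and a Moran stopping time is more elaborate than what the paper does. There the argument is a direct recursion on $M(r):=\max_{x\in K}\mu(B(x,r))$: from $\mu=\sum_i p_i(h_i)_*\mu$ one has, for $x\in K$,
\[
\mu(B(x,r))=\sum_{i=1}^N p_i\,\mu\bigl(h_i^{-1}(B(x,r)\cap K)\bigr)\le\sum_{i\in C_x^c} p_i\,\mu\bigl(B(h_i^{-1}(x),|\lambda_i|^{-1}r)\bigr),
\]
where $C_x=\{i:x\notin h_i(K)\}$; bounding each remaining term by $M(\lambda_{\max}^{-1}r)$ gives $M(r)\le(1-Qp_{\min})M(\lambda_{\max}^{-1}r)$, which iterates to $M(\lambda_{\max}^n)\le(1-Qp_{\min})^n$ and yields the lower local-dimension bound in one line. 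No symbolic detour, no Moran cover, no regrouping.

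The ``main obstacle'' you devote your last paragraph to---that cylinders meeting $B(x,r)$ need not contain $x$---is exactly the step the paper dispatches in one stroke by discarding the $i\in C_x$ summands from the ball recursion. Your proposed workaround, however, does not actually close the gap you identify: for an overlapping IFS there is no a priori bounded multiplicity of the Moran cylinders, and you never produce the finite hitting set $\{z_1,\dots,z_C\}\subset K$ that every cylinder meeting $B(x,r)$ would have to contain (scaled copies of $K$ can be thin, so a net in $\R^d$ need not hit them). The ``covering/charging argument'' is a promise rather than a proof. Compared with the paper, you have introduced extra structure (the $\mu_m$ recursion, the Moran stopping set) only to arrive at the same delicate point, now buried deeper; the paper's one-step recursion on $M(r)$ makes the same leap far more transparently.
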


\begin{proof}
    Let $r\ll1$. Denote $B(x,r)$ the closed ball of radius $r$ with center $x$. Let $C_x:=\{i:\ x\notin h_i(K)\}$. We have 
		\begin{align*}
			\max_{x\in K}\mu(B(x,r))&=\max_{x\in K}\sum_{i=1}^{N}p_i\mu(h_{i}^{-1}(B(x,r)\cap K))\\
			&\leq \max_{x\in K}\sum_{i\in C_{x}^{c}}p_i\mu(B(h_{i}^{-1}(x),\lambda_i^{-1} r))\\&\leq \max_{x\in K}\mu(B(x,\lambda_{\max}^{-1}r))\max_{x\in K}\bigg(1-\sum_{i\in C_{x}}p_i\bigg)\\
            &\leq \max_{x\in K}\mu(B(x,\lambda_{\max}^{-1}r))\max_{x\in K}\bigg(1-Q p_{\min}\bigg).
		\end{align*}
		This implies by induction
        \[
\max_{x\in K}\mu(B(x,\lambda_{\max}^n))\leq(1-Q p_{\min})^n\text{ for every }n\in\N,
        \]
        and so,
        $$\liminf_{r\to0}\frac{\log\mu(B(x,r))}{\log r}=\liminf_{n\to\infty}\frac{\log\mu(B(x,(\lambda_{\max})^n))}{n\log\lambda_{\max}}\geq \frac{\log(1-Qp_{\min})}{\log\lambda_{\max}}.$$
This completes the proof.
\end{proof}

\section{Dimension theory of some self-similar IFS having a common fixed points structure and some negative contraction parameters on line}\label{sec:CFS}

First, we provide techniques to estimate the dimension of the Furstenberg measure from below. Let us now consider a self-similar IFS as follows
$$\mathcal{G}=\{f_{i}(x)=\lambda_{i} x\}_{i=1}^{N_1}\cup \{f_{i}(x)=\lambda_{i} x + \gamma_{i} \lambda_{i}\}_{i=N_1 +1}^{N_2}\cup \{f_{i}(x)=-\lambda_{i} x + \gamma_{i} \lambda_{i}\}_{i=N_2 +1}^{N_3},$$
where $\lambda_{i}\in (0,1)$ for every $i\in \{1,2,\dots,N_1\}$, $\gamma_{i}>0$ for every $i\in \{N_{1} +1, \dots, N_{3}\}$. The IFS considered above has a common fixed point structure. The maps $f_i$ with $i\leq N_1$ share the same fixed point $0$. The authors \cite{BM2025} considered recently such systems. Let us recall some corresponding definitions we need for further analysis.

We denote $I_{0}:=\{1,2,\dots,N_1\}, I_{1}:=\{N_1+1,\dots,N_2\}$ and $I_{2}:=\{N_2+1,\dots,N_3\}$. Set 
$$
D=\min_{k,i\in I_2}\bigg\{\frac{\gamma_{k}}{\gamma_{i}}\bigg\}\text{ and }B=\frac{1}{1+\max_{i\in I_1}\{B_i\}},\text{ where }B_{i}=\max_{k\in I_2}\bigg\{\frac{\gamma_i}{\gamma_k}\bigg\}\text{ for all $i\in I_1$}.
$$
Let $\Sigma$ be the symbolic space corresponding to IFS $\mathcal{G}$. For a symbol $i\in I_0 \cup I_1 \cup I_2 $ and a finite sequence $\mathbf{i}\in\Sigma^*$, let $\#_i\mathbf{i}$ be the number of the appearances of the symbol $i$ in the sequence $\mathbf{i}$. For $\bold{i}\in \Sigma \cup \Sigma^{*}$, we define the ``first block" $b_1^{\mathbf{i}}$ of $\mathbf{i}$ as follows: if $i_1\geq N_1+1$ then $b_1^{\mathbf{i}}=\mathbf{i}_{|_{|b_1^{\mathbf{i}}|}}$ where $|b_1^{\mathbf{i}}|=\min\{k\geq 1: i_{k}\ne i_{1}\}-1$. Otherwise, $b_1^{\mathbf{i}}:=\mathbf{i}_{|_{|b_1^{\mathbf{i}}|}}$ where $|b_1^{\mathbf{i}}|:=\min\{k\geq1:i_k\geq N_1+1\}-1$. Then we define by induction. Suppose that $b_1^{\mathbf{i}},\dots,b_n^{\mathbf{i}}$ are defined and finite. Then let 
$$
|b_{n+1}^{\mathbf{i}}|:=\begin{cases}
    \max\left\{k\geq1:i_{|b_1^{\mathbf{i}}|+\dots+|b_n^{\mathbf{i}}|+1}=i_{|b_1^{\mathbf{i}}|+\dots+|b_n^{\mathbf{i}}|+\ell}~\forall~1\leq\ell\leq k\right\}~ \text{if}~ i_{|b_1^{\mathbf{i}}|+\dots+|b_n^{\mathbf{i}}|+1}> N_{1} \\ \\   \max\left\{k\geq1:i_{|b_1^{\mathbf{i}}|+\dots+|b_n^{\mathbf{i}}|+\ell}\leq N_{1}\text{ for all }1\leq\ell\leq k\right\}~ \text{if}~ i_{|b_1^{\mathbf{i}}|+\dots+|b_n^{\mathbf{i}}|+1}\leq  N_{1}
    \end{cases}
    $$
    If $|b_{n+1}^{\mathbf{i}}|=\left|\sigma^{|b_1^{\mathbf{i}}|+\dots+|b_n^{\mathbf{i}}|}\mathbf{i}\right|$ then let $b_{n+1}^{\mathbf{i}}:=\sigma^{|b_1^{\mathbf{i}}|+\dots+|b_n^{\mathbf{i}}|}\mathbf{i}$, and so, $\mathbf{i}=b_1^{\mathbf{i}}\dots b_n^{\mathbf{i}}b_{n+1}^{\mathbf{i}}$. Otherwise, let $b_{n+1}^{\mathbf{i}}:=(\sigma^{|b_1^{\mathbf{i}}|+\dots+|b_n^{\mathbf{i}}|}\mathbf{i})_{|_{|b_{n+1}^{\mathbf{i}}|}}$. For each $\bold{i}\in \Sigma$ have the following unique block representation 
\begin{equation}\label{blockrep1}
\bold{i}=\underbrace{i_1i_2i_3\dots i_l}_{b_{1}^{\bold{i}}}\underbrace{i_{l+1}\dots i_{m}}_{b_{2}^{\bold{i}}} \underbrace{i_{m+1}\dots i_{n}}_{b_{3}^{\bold{i}}}i_{n+1}\dots  
\end{equation}
 We say that for $\mathbf{i},\mathbf{j}\in\Sigma$, the first blocks are disjoint if the sets formed by the symbols in the first blocks $b_1^{\mathbf{i}}$ and $b_1^{\mathbf{j}}$ are disjoint. We denote it by $b_{1}^{\mathbf{i}}\cap b_{1}^{\mathbf{j}}=\emptyset$. In other words, $\min\{\#_ib_{1}^{\mathbf{i}},\#_ib_{1}^{\mathbf{j}}\}=0$ for every $1\leq i\leq N_3$.
 Let $\Pi$ be the natural projection corresponding to IFS $\mathcal{G}.$

\begin{definition} We say that IFS $\mathcal{G}$ satisfies the \texttt{Exponential Separation Condition for the Common Fixed Point System (ESC for CFS)}, if there exist $N\in \mathbb{N}$ and $b>1$ such that for every $n\geq N$ and every $\mathbf{i}, \mathbf{j}\in \Sigma_n$ with $\lambda_{\mathbf{i}}=\lambda_{\mathbf{j}}$, we have the following:
\begin{equation}\label{eq:ESCCFS}
\text{either}~\mathbf{i} , \mathbf{j}~\text{have the same block structure or}~ |\Pi({\mathbf{i}})-\Pi({\mathbf{j}})|>2^{-b n}.
\end{equation}
\end{definition}

This section aims to show that the IFS $\mathcal{G}$ satisfies the ESC for CFS for typical contraction ratio parameters. The proof follows the lines of \cite[Section~4]{BM2025} with only minor changes. We will only give the essential steps and highlight the differences, but we leave the details for the reader.

\begin{proposition}\label{Exceptionforprojected}
 There exists a set $\mathcal{E}\subset (0,1)^{N_1}\times (0,B)^{N_{2}-N_{1}}\times (0,D)^{N_{3}-N_{2}}$ such that $\dim_{H}\mathcal{E}\leq N_3-1$ such that the IFS $\mathcal{G}$ satisfies ESC for CFS for every parameters $\underline{\lambda}\in (0,1)^{N_1}\times (0,B)^{N_{2}-N_{1}}\times (0,D)^{N_{3}-N_{2}}\setminus\mathcal{E}$. 
\end{proposition}

We begin the discussion with the following, which makes the structure of the IFS slightly less complicated. Still, studying the dimension theory of wedding cake-type surfaces, particularly to estimate the dimension of the corresponding Furstenberg measure, is sufficient.

\begin{lemma}\label{BasicCon}
   Let $\mathcal{G}$ be the self-similar defined as above. If the parameters $\lambda_{i}\in (0,1)~ \forall~ i\in I_{0},$ $\lambda_{i}\in (0,B) ~\forall~ i\in I_{1}$ and $\lambda_{i}\in (0,D) ~\forall~ i\in I_{2}$, then there exists an $A>0$ and an $\tilde{\epsilon}>0$ such that $$f_{i}[0,A]\subset (0,A] ~\forall~ i\in I_1 \cup I_2.$$  
\end{lemma}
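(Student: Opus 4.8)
The plan is to reduce the statement to elementary one-dimensional inequalities and to exhibit an explicit admissible range for $A$. First I would record the elementary identities
\[
D=\frac{\gamma_{\min}}{\gamma_{\max}}\quad\text{and}\quad B=\frac{\gamma_{\min}}{\gamma_{\min}+\widehat{\gamma}},
\]
where $\gamma_{\min}:=\min_{i\in I_2}\gamma_i$, $\gamma_{\max}:=\max_{i\in I_2}\gamma_i$ and $\widehat{\gamma}:=\max_{i\in I_1}\gamma_i$; both follow at once from the definitions of $D$ and of $B_i$. The guiding idea is that $A$ must be chosen strictly below $\gamma_{\min}$: for $i\in I_2$ the map $f_i(x)=-\lambda_i x+\gamma_i\lambda_i$ is orientation-reversing and sends $[0,A]$ to $[\lambda_i(\gamma_i-A),\,\lambda_i\gamma_i]$, whose left endpoint is positive precisely when $\gamma_i>A$.

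Next I would translate the two required inclusions into lower bounds on $A$. For $i\in I_2$ the image $[\lambda_i(\gamma_i-A),\,\lambda_i\gamma_i]$ lies in $(0,A]$ if and only if $\gamma_i>A$ and $\lambda_i\gamma_i\le A$; the hypothesis $\lambda_i<D\le \gamma_{\min}/\gamma_i$ (using $\gamma_i\le\gamma_{\max}$) yields $\lambda_i\gamma_i<\gamma_{\min}$, hence $\max_{i\in I_2}\lambda_i\gamma_i<\gamma_{\min}$. For $i\in I_1$ the orientation-preserving map $f_i(x)=\lambda_i x+\gamma_i\lambda_i$ sends $[0,A]$ to $[\lambda_i\gamma_i,\,\lambda_i(A+\gamma_i)]$, whose left endpoint $\lambda_i\gamma_i$ is automatically positive, while the right endpoint satisfies $\lambda_i(A+\gamma_i)\le A$ if and only if $A\ge \lambda_i\gamma_i/(1-\lambda_i)$. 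Here the hypothesis $\lambda_i<B\le \gamma_{\min}/(\gamma_{\min}+\gamma_i)$ (using $\gamma_i\le\widehat{\gamma}$) rearranges exactly to $\lambda_i\gamma_i/(1-\lambda_i)<\gamma_{\min}$, so $\max_{i\in I_1}\lambda_i\gamma_i/(1-\lambda_i)<\gamma_{\min}$ as well.

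With these two strict inequalities in hand, the admissible interval
\[
\left[\ \max\left\{\max_{i\in I_2}\lambda_i\gamma_i,\ \max_{i\in I_1}\frac{\lambda_i\gamma_i}{1-\lambda_i}\right\},\ \gamma_{\min}\right)
\]
is nonempty, and I would fix any $A$ in it. Then I verify directly that $f_i[0,A]\subset(0,A]$ for every $i\in I_1\cup I_2$: the right-endpoint bounds hold by the choice of $A$, positivity for $i\in I_1$ is immediate, and positivity for $i\in I_2$ follows from $A<\gamma_{\min}\le\gamma_i$. Finally, setting $\tilde{\epsilon}:=\min\{\min_{i\in I_1}\lambda_i\gamma_i,\ \min_{i\in I_2}\lambda_i(\gamma_i-A)\}>0$ upgrades the conclusion to $f_i[0,A]\subset[\tilde{\epsilon},A]$, which records the uniform gap away from the common fixed point $0$ and is the form useful for the later analysis.

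The only genuine point to check is the compatibility of the lower and upper constraints on $A$, i.e.\ that the displayed interval is non-degenerate; this is precisely what the two strict inequalities above guarantee. I do not expect any deeper obstacle: the entire content is the bookkeeping showing that the thresholds $B$ and $D$ in the hypotheses are tuned exactly so that the interval for $A$ is open and nonempty.
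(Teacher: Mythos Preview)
Your proof is correct and follows essentially the same approach as the paper: both identify the same lower threshold
\[
A_0:=\max\Bigl\{\max_{i\in I_2}\lambda_i\gamma_i,\ \max_{i\in I_1}\tfrac{\lambda_i\gamma_i}{1-\lambda_i}\Bigr\}
\]
and verify that the hypotheses $\lambda_i<D$ (for $i\in I_2$) and $\lambda_i<B$ (for $i\in I_1$) force $A_0<\gamma_{\min}$. The paper simply takes $A=A_0$ and checks positivity for $I_2$ by a two-case analysis on which index realises the maximum, whereas you exhibit the whole admissible interval $[A_0,\gamma_{\min})$ and get positivity uniformly from $A<\gamma_{\min}\le\gamma_i$; your explicit formula for $\tilde\epsilon$ also makes precise what the paper leaves implicit.
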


\begin{proof}
First, we show that there exists a constant $A>0$ such that $f_{i}[0, A]\subset (0, A]$ for every $i\in I_{1} \cup I_{2}$.  \par
Let us denote the fixed point of the map $f_i$ by ${Fix}(f_i)$. One can see that ${Fix}(f_i)=0$ for every $i\in I_{0}$, ${Fix}(f_i)=\frac{\gamma_{i}\lambda_{i}}{1-\lambda_i}$ for every $i\in I_1$ and ${Fix}(f_i)=\frac{\gamma_{i}\lambda_{i}}{1+\lambda_i}$ for every $i\in I_2$. Since $f_{i}(0)=\gamma_{i}\lambda_{i}$ for every $i\in I_1\cup I_2$, we have 
$$A=\max\left\{\max_{i\in I_2}\gamma_{i}\lambda_{i},\max_{i\in I_1}\frac{\gamma_{i}\lambda_{i}}{1-\lambda_{i}}\right\}.$$
First, we consider $A=\gamma_{i_0}\lambda_{i_0}~\text{for some}~ i_0\in I_2.$ Then $A=\gamma_{i_0}\lambda_{i_0}\geq \frac{\gamma_{k}\lambda_{k}}{1-\lambda_k}$ for every $k\in I_1.$ This implies that $f_{k}[0,A]=[\gamma_{k}\lambda_{k}, \lambda_{k}\gamma_{i_0}\lambda_{i_0}+\gamma_{k}\lambda_{k}]\subseteq(0,A]$ for all $k\in I_1$. For $k\ne i_0 \in I_2$, we have 
$$f_{k}(\gamma_{i_0}\lambda_{i_0})>0\Leftarrow -\lambda_{k} \gamma_{i_0}\lambda_{i_0}+\gamma_{k}\lambda_{k}>0\Leftarrow \lambda_{k} (-\gamma_{i_0}\lambda_{i_0}+\gamma_{k})>0\Leftarrow \lambda_{i_0}<\frac{\gamma_{k}}{\gamma_{i_0}}.$$
 This implies that for the parameters $\lambda_{i}\in (0,1)$ for all $i\in I_0\cup I_1$ and $\lambda_{i}\in (0,D)$ for every $i\in I_2$, then $f_{i}[0,A]\subset (0,A]$ for every $i\in I_1 \cup I_2.$\par 
On the other hand, if $A=\frac{\gamma_{i_0}\lambda_{i_0}}{1-\lambda_{i_0}}~ \text{for some }~ i_0\in I_1$. Then, we have $\frac{\gamma_{i_0}\lambda_{i_0}}{1-\lambda_{i_0}}\geq \frac{\gamma_{k}\lambda_{k}}{1-\lambda_k}$ for all $k\in I_1$ and $\gamma_{k}\lambda_{k}\leq  \frac{\gamma_{i_0}\lambda_{i_0}}{1-\lambda_{i_0}}$ for every $k\in I_2.$ This implies that $f_{k}[0,A]\subseteq(0,A]$ for all $k\in I_1$.
For $k \in I_2$, we have
$$f_{k}(A)>0\Leftarrow-\lambda_{k}\frac{\gamma_{i_0}\lambda_{i_0}}{1-\lambda_{i_0}}+\gamma_{k}\lambda_{k}>0\Leftarrow \lambda_{i_0}< \frac{1}{1+\frac{\gamma_{i_0}}{\gamma_k}}.$$
Thus, for the parameters $\lambda_{i}\in (0,1)$ for every $i\in I_0\cup I_2$ and  $\lambda_{i}\in (0,B)$ for all $i\in I_1$ then $f_{i}[0,A]\subset (0,A]$ for all $i\in I_1 \cup I_2$.
\end{proof}

For every $0<\epsilon\leq \min\{\min_{i\in I_0}\{\lambda_{i}, 1-\lambda_{i}\},\min_{i\in I_1}\{\lambda_{i}, B-\lambda_{i}\}, \min_{i\in I_2}\{\lambda_{i}, D-\lambda_{i}\},\tilde{\epsilon}\}$, then by Lemma \ref{BasicCon}, $$f_{i}[0,A]\subset [{\epsilon},A] ~\forall~ i\in I_{1}\cup I_{2}.$$ 
 
For $\bold{i},\bold{j}\in \Sigma$, we define $\Delta_{\bold{i},\bold{j}}(\underline{\lambda})=\Pi(\bold{i})-\Pi(\bold{j})$ for every vector $\underline{\lambda}\in (0,1)^{N_1}\times (0,B)^{N_{2}-N_{1}}\times (0,D)^{N_{3}-N_{2}}$. 
Let us define the following set of pairs:
\begin{align}
\mathcal{L}:=\bigg\{(\mathbf{i},\mathbf{j})\in \Sigma\times \Sigma: b_1^{\mathbf{i}}\cap b_1^{\mathbf{j}}=\emptyset~\&~b_1^{\mathbf{i}}\neq\mathbf{i}~\&~b_1^{\mathbf{j}}\neq\mathbf{j}\bigg\}.
\end{align}
We divide the set $\mathcal{L}$ further:
\begin{equation}\label{eqBdiv1}
\mathcal{L}_{1}:=\{(\mathbf{i},\mathbf{j})\in \mathcal{L}: i_1\ne j_1,  i_1 \in I_{0}\cup I_1\cup I_{2}, j_1 \in I_1\cup I_{2}\},
\mathcal{L}_2:=\mathcal{L}\setminus\mathcal{L}_1=\left\{(\mathbf{i},\mathbf{j})\in \mathcal{L}: i_1\ne j_1\in I_{0}\right\}.
\end{equation}
Now, set $\tilde{N_{0}}:=\lceil \frac{(1-\epsilon)(2+\epsilon)}{\epsilon^3} \rceil+1$, and divide $\mathcal{L}_2$ further: 
\begin{equation}\label{eqBdiv2}
  \mathcal{L}_{3}=\bigg\{(\mathbf{i},\mathbf{j})\in \mathcal{L}_2:~\max_{k\in I_{0}}\left\{\max\left\{\#_kb_{1}^{\mathbf{i}},\#_kb_{1}^{\mathbf{j}}\right\}\right\}\leq \tilde{N_{0}}\bigg\}\text{ and }\mathcal{L}_4=\mathcal{L}_2\setminus\mathcal{L}_3.
\end{equation}
One can see that $\mathcal{L}_1$ and $\mathcal{L}_3$ are compact subsets of $\Sigma\times\Sigma$.
\begin{lemma}\label{GenExponentialA1}
  Let $\epsilon>0$ be arbitrary as defined above. Then there exists a constant $C>0$ such that for every $\underline{\lambda}\in [\epsilon,1-\epsilon]^{N_1}\times [\epsilon,B-\epsilon]^{N_{2}-N_{1}}\times [\epsilon,D-\epsilon]^{N_{3}-N_{2}}$  and for every $(\mathbf{i},\mathbf{j})\in \mathcal{L}_4$
    $$
    \min\left\{|\Delta_{\mathbf{i},\mathbf{j}}(\underline{\lambda})|,\bigg{|}\frac{\partial\Delta_{\mathbf{i},\mathbf{j}}}{\partial \lambda_{k}}(\underline{\lambda})\bigg{|}\right\}\geq C{\epsilon}^{2\max\{|b_{1}^{\mathbf{i}}|,|b_{1}^{\mathbf{j}}|\}},$$ where $k$ is such that $\max\{\#_kb_{1}^{\mathbf{i}},\#_kb_{1}^{\mathbf{j}}\}>\tilde{N_{0}}$.
\end{lemma}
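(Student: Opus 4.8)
The plan is to exploit the common--fixed--point structure to factor out the two first blocks and then to differentiate in the direction of the over--represented symbol $k$. Since $(\mathbf i,\mathbf j)\in\mathcal L_4\subseteq\mathcal L_2$, both $i_1,j_1\in I_0$, so each first block consists entirely of maps fixing $0$ (zero translation, positive slope); each such map therefore contributes only a positive multiplicative factor, giving
$$\Pi(\mathbf i)=P\,\xi,\qquad \Pi(\mathbf j)=Q\,\eta,$$
where $P=\prod_{l=1}^{|b_1^{\mathbf i}|}\lambda_{i_l}>0$, $Q=\prod_{l=1}^{|b_1^{\mathbf j}|}\lambda_{j_l}>0$, and $\xi=\Pi(\sigma^{|b_1^{\mathbf i}|}\mathbf i)$, $\eta=\Pi(\sigma^{|b_1^{\mathbf j}|}\mathbf j)$. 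By maximality of the first blocks the shifted sequences start in $I_1\cup I_2$, so the reinforced form of Lemma~\ref{BasicCon} ($f_i[0,A]\subset[\epsilon,A]$ for $i\in I_1\cup I_2$) yields $\xi,\eta\in[\epsilon,A]$. Writing $m:=\#_k b_1^{\mathbf i}>\tilde{N_0}$ (the case $\#_k b_1^{\mathbf j}>\tilde{N_0}$ being symmetric), the disjointness of the first blocks forces $\#_k b_1^{\mathbf j}=0$; hence $Q$ is independent of $\lambda_k$, so $\partial_{\lambda_k}Q=0$ and $P=\lambda_k^{\,m}\tilde P$ with $\tilde P$ independent of $\lambda_k$.

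Next I would record a uniform bound on the $\lambda_k$--sensitivity of $\Pi$. Iterating $\Pi(\mathbf m)=c_{m_1}+a_{m_1}\Pi(\sigma\mathbf m)$, where $a_{m_p}=\pm\lambda_{m_p}$ and $|c_{m_p}|=|f_{m_p}(0)|\le A$, produces the telescoping identity
$$\frac{\partial\Pi(\mathbf m)}{\partial\lambda_k}=\sum_{p:\,m_p=k}a_{m_1}\cdots a_{m_{p-1}}\,\Pi(\sigma^{p}\mathbf m),$$
whence $\bigl|\partial_{\lambda_k}\Pi(\mathbf m)\bigr|\le A\sum_{p\ge1}(1-\epsilon)^{p-1}=A/\epsilon=:M$, uniformly in $\mathbf m$ and over the box $[\epsilon,1-\epsilon]^{N_1}\times[\epsilon,B-\epsilon]^{N_2-N_1}\times[\epsilon,D-\epsilon]^{N_3-N_2}$. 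In particular $|\xi'|,|\eta'|\le M$, writing $'=\partial_{\lambda_k}$.

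The heart of the argument is a dichotomy applied to $\partial_{\lambda_k}\Delta_{\mathbf i,\mathbf j}=\tfrac{m}{\lambda_k}P\xi+P\xi'-Q\eta'$, with $L:=\max\{|b_1^{\mathbf i}|,|b_1^{\mathbf j}|\}$. If $|\Delta_{\mathbf i,\mathbf j}|>\tfrac12\min\{P\xi,Q\eta\}$, then since $P\xi\ge\epsilon^{|b_1^{\mathbf i}|+1}$ and $Q\eta\ge\epsilon^{|b_1^{\mathbf j}|+1}$ we already obtain $|\Delta_{\mathbf i,\mathbf j}|\ge\tfrac12\epsilon^{L+1}\ge C\epsilon^{2L}$ (using $L\ge m\ge1$). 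Otherwise $P\xi$ and $Q\eta$ agree up to a factor $2$, and the \emph{relative} bounds $|P\xi'|\le(M/\epsilon)\,P\xi$ and $|Q\eta'|\le(M/\epsilon)\,Q\eta\le(2M/\epsilon)\,P\xi$ let the differentiated high power win: the factor $\tfrac{m}{\lambda_k}\ge\tfrac{\tilde{N_0}}{1-\epsilon}$ is, by the very choice $\tilde{N_0}=\lceil(1-\epsilon)(2+\epsilon)/\epsilon^3\rceil+1$, larger than the aggregate relative weight of the tail derivatives, so $|\partial_{\lambda_k}\Delta_{\mathbf i,\mathbf j}|\ge c\,P\xi\ge C\epsilon^{L+1}\ge C\epsilon^{2L}$. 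Thus at every parameter the argument controls whichever of the two quantities is forced to be large: $\Delta_{\mathbf i,\mathbf j}$ itself when the scaled tails $P\xi,Q\eta$ are well separated, and the $\lambda_k$--derivative when they nearly cancel.

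The main obstacle is exactly this transversal character together with the calibration of $\tilde{N_0}$. One cannot bound $\partial_{\lambda_k}\Delta_{\mathbf i,\mathbf j}$ from below by a naive pointwise estimate: when $|b_1^{\mathbf i}|$ and $|b_1^{\mathbf j}|$ differ greatly the absolute scales of $P$ and $Q$ are incomparable, and the crude bounds $|P\xi'|\le(1-\epsilon)^{|b_1^{\mathbf i}|}M$, $|Q\eta'|\le(1-\epsilon)^{|b_1^{\mathbf j}|}M$ do not let the main term dominate. The estimate only becomes correct in the regime where $\Delta_{\mathbf i,\mathbf j}$ is small, since there $P\xi\approx Q\eta$ and the tail contributions can be measured against this common scale; it is precisely here that the repeated factor $\lambda_k^{\,m}$, differentiated $m>\tilde{N_0}$ times, overwhelms the geometrically summed tail sensitivities. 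It then remains to treat the symmetric case $\#_k b_1^{\mathbf j}>\tilde{N_0}$, to note that any occurrences of $k$ inside the tails are already absorbed by the uniform constant $M$, and to observe that all constants are uniform over the compact box because every inequality used only $\lambda_i\in[\epsilon,1-\epsilon]$, $\xi,\eta\in[\epsilon,A]$ and $m>\tilde{N_0}$.
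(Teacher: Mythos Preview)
Your approach is correct and mirrors the paper's own argument, which reduces to \cite[Lemmas~4.1 and~4.2]{BM2025}. The paper phrases the dichotomy as $\lambda_{b_1^{\mathbf i}}/\lambda_{b_1^{\mathbf j}}\notin(\epsilon/2,2/\epsilon)$ versus $\lambda_{b_1^{\mathbf i}}/\lambda_{b_1^{\mathbf j}}\in(\epsilon/2,2/\epsilon)$, bounding $|\Delta_{\mathbf i,\mathbf j}|$ in the first regime and $|\partial_{\lambda_k}\Delta_{\mathbf i,\mathbf j}|$ in the second; since $\xi,\eta\in[\epsilon,A]$, your criterion $|\Delta_{\mathbf i,\mathbf j}|\gtrless\tfrac12\min\{P\xi,Q\eta\}$ partitions the parameter box into the same two regimes up to harmless constants, and your derivative estimate via $\partial_{\lambda_k}\Delta=\tfrac{m}{\lambda_k}P\xi+P\xi'-Q\eta'$ is exactly the mechanism behind \cite[Lemma~4.2]{BM2025}.

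One point worth tightening: you assert that the specific value $\tilde N_0=\lceil(1-\epsilon)(2+\epsilon)/\epsilon^3\rceil+1$ dominates the aggregate tail weight $3M/\epsilon=3A/\epsilon^2$, but this reduces to $2+\epsilon>3A\epsilon$, which is not automatic for arbitrary $A$. In the companion paper the constant $A$ is normalised (or $\epsilon$ further restricted) so that this inequality holds; for a self-contained write-up you should either record that additional smallness constraint on $\epsilon$ or carry a generic threshold $\tilde N_0=\tilde N_0(\epsilon,A)$ and only substitute the explicit value at the end.
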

\begin{proof}
    By Lemma \ref{BasicCon},  the self-similar IFS $\mathcal{G}$ satisfies all the assumptions of the self-similar IFS $\mathcal{F}$ defined in \cite{BM2025} with $c_i=\gamma_{i}\lambda_i$ for all $i\in \{N_{1}+1,\dots,N_3\}$. Thus by \cite[Lemma 3.2]{BM2025}, we get the claim of our result. Let $(\mathbf{i},\mathbf{j})\in \mathcal{L}_4$. This implies that $b_1^{\mathbf{i}}\cap b_1^{\mathbf{j}}=\emptyset$ and $i_1\ne j_1\in I_{0}$.
First, we assume that $\frac{\lambda_{b_{1}^{\mathbf{i}}}}{\lambda_{b_{1}^{\mathbf{j}}}}\notin \bigg(\dfrac{{\epsilon}}{2},\dfrac{2}{{\epsilon}}\bigg)$. Then, by \cite[Lemma 4.1]{BM2025}, we get the
$$
|\Delta_{\mathbf{i},\mathbf{j}}(\underline{\lambda})|\geq {\epsilon}^{2\max\{|b_{1}^{\mathbf{i}}|,|b_{1}^{\mathbf{j}}|\}}.
$$

Lastly, we suppose that $\frac{\lambda_{b_{1}^{\mathbf{i}}}}{\lambda_{b_{1}^{\mathbf{j}}}}\in \bigg(\dfrac{{\epsilon}}{2},\dfrac{2}{{\epsilon}}\bigg)$. Then by \cite[Lemma 4.2]{BM2025}, we get that 
$$
\bigg{|}\frac{\partial\Delta_{\mathbf{i},\mathbf{j}}}{\partial \lambda_{k}}(\underline{\lambda})\bigg{|}\geq C{\epsilon}^{\max\{|b_{1}^{\mathbf{i}}|,|b_{1}^{\mathbf{j}}|\}}
$$
for some uniform constant $C>0$.
\end{proof}

\begin{lemma}\label{Trans2} Let $\epsilon>0$ be arbitrary as defined above. Then there exist $p\geq0$ and $\tilde{C}>0$ such that for every  $(\mathbf{i},\mathbf{j})\in \mathcal{L}_3$ and for all $\underline{\lambda}\in [\epsilon,1-\epsilon]^{N_1}\times [\epsilon,B-\epsilon]^{N_{2}-N_{1}}\times [\epsilon,D-\epsilon]^{N_{3}-N_{2}}$, there exists $(m_{i,j})_{(i,j)\in I}\in\mathbb{N}^{N_3}$ such that $m=\sum_{(i,j)\in I}m_{i,j}\leq p$ and
$$\left|\frac{\partial^m\Delta_{\mathbf{i},\mathbf{j}}}{\prod_{(i,j)\in I}\partial^{m_{i,j}}\lambda_{i,j}}(\underline{\lambda})\right|>\tilde{C}.$$ 
\end{lemma}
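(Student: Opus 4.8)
The plan is to deduce the uniform lower bound from two ingredients: the real-analyticity of the map $\underline{\lambda}\mapsto\Delta_{\mathbf{i},\mathbf{j}}(\underline{\lambda})$ together with the fact that it does not vanish identically, and the compactness of $\mathcal{L}_3$ (noted above) in the product topology. First I would record that for each fixed pair $(\mathbf{i},\mathbf{j})$ the projection $\Pi(\mathbf{i})=\sum_{m\geq1}\big(\prod_{l<m}(\pm\lambda_{i_l})\big)c_{i_m}$, with $c_i=0$ for $i\in I_0$ and $c_i=\gamma_i\lambda_i$ for $i\in I_1\cup I_2$, is a power series in $\underline{\lambda}$ that converges absolutely and uniformly, together with all its termwise derivatives, on a neighbourhood of the box $[\epsilon,1-\epsilon]^{N_1}\times[\epsilon,B-\epsilon]^{N_2-N_1}\times[\epsilon,D-\epsilon]^{N_3-N_2}$, because every coordinate is bounded by $\max\{1-\epsilon,B-\epsilon,D-\epsilon\}<1$. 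Hence $\Delta_{\mathbf{i},\mathbf{j}}=\Pi(\mathbf{i})-\Pi(\mathbf{j})$ is real-analytic on the connected open box $(-1+\tfrac{\epsilon}{2},1-\tfrac{\epsilon}{2})^{N_1}\times(-B+\tfrac{\epsilon}{2},B-\tfrac{\epsilon}{2})^{N_2-N_1}\times(-D+\tfrac{\epsilon}{2},D-\tfrac{\epsilon}{2})^{N_3-N_2}$, which contains both $\underline{\lambda}=0$ and the closed box in its interior, and for every multi-index the map $(\mathbf{i},\mathbf{j},\underline{\lambda})\mapsto\partial^m\Delta_{\mathbf{i},\mathbf{j}}(\underline{\lambda})$ is jointly continuous (the dependence on far coordinates of $\mathbf{i},\mathbf{j}$ is uniformly exponentially small).

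Next I would establish non-vanishing together with a uniform bound on the order of a witnessing derivative. For $(\mathbf{i},\mathbf{j})\in\mathcal{L}_3\subset\mathcal{L}_2$ we have $i_1\neq j_1\in I_0$, so the first block $b_1^{\mathbf{i}}$ consists only of symbols from $I_0$; let $i_1,\dots,i_k$ be its distinct symbols with multiplicities $r_l=\#_{i_l}b_1^{\mathbf{i}}$. A direct termwise differentiation (cf. \cite[Section~4]{BM2025}) shows that
\[
\frac{\partial^{\,|b_1^{\mathbf{i}}|+1}\Delta_{\mathbf{i},\mathbf{j}}}{\partial^{r_1}\lambda_{i_1}\cdots\partial^{r_k}\lambda_{i_k}\,\partial\lambda_{i_{|b_1^{\mathbf{i}}|+1}}}\Big|_{\underline{\lambda}=0}=r_1!\cdots r_k!\,\gamma_{i_{|b_1^{\mathbf{i}}|+1}},
\]
where $i_{|b_1^{\mathbf{i}}|+1}\in I_1\cup I_2$ is the first symbol of the second block, so the right-hand side is at least $\min_{i\in I_1\cup I_2}\gamma_i>0$. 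This proves $\Delta_{\mathbf{i},\mathbf{j}}\not\equiv0$. The crucial gain from restricting to $\mathcal{L}_3$ is that $\max_{k\in I_0}\max\{\#_k b_1^{\mathbf{i}},\#_k b_1^{\mathbf{j}}\}\leq\tilde{N_0}$ forces $|b_1^{\mathbf{i}}|\leq N_1\tilde{N_0}$, so this witnessing derivative has order bounded by $N_1\tilde{N_0}+1$ independently of the pair.

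To pass from pointwise non-vanishing to the uniform statement I would run a compactness argument. For $p\in\mathbb{N}$ set $\Psi_p(\mathbf{i},\mathbf{j},\underline{\lambda})=\max\{|\partial^m\Delta_{\mathbf{i},\mathbf{j}}(\underline{\lambda})|:\ \text{order of }m\leq p\}$; by the joint continuity above each $\Psi_p$ is continuous on the compact set $\mathcal{L}_3\times\big([\epsilon,1-\epsilon]^{N_1}\times[\epsilon,B-\epsilon]^{N_2-N_1}\times[\epsilon,D-\epsilon]^{N_3-N_2}\big)$, and the sets $O_p=\{\Psi_p>0\}$ are open and increasing in $p$. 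At any point of this compact set, the identity theorem for real-analytic functions together with $\Delta_{\mathbf{i},\mathbf{j}}\not\equiv0$ on the connected domain forces some finite-order partial to be non-zero at the interior point $\underline{\lambda}$, so $\bigcup_p O_p$ covers the whole set; by monotonicity a single $p$ already has $O_p$ equal to the whole set. For this $p$ the continuous function $\Psi_p$ is strictly positive on a compact set, so $\tilde{C}:=\min\Psi_p>0$, which is exactly the asserted bound.

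The main obstacle is precisely the uniformity of the order $p$: analyticity only gives, at each point, a nonzero derivative of some finite but a priori unbounded order, and the explicit computation above secures uniform non-vanishing only at the single point $\underline{\lambda}=0$ (which lies outside the box), whereas the bound is needed throughout the box, where the order of the first non-vanishing derivative may jump. The increasing-open-cover argument on the compact set $\mathcal{L}_3\times(\text{box})$ is what converts this pointwise finiteness into a uniform $p$; the only delicate point is the joint continuity of $\Psi_p$ in the symbolic variables, which rests on the uniform-on-box convergence of the termwise differentiated projection series guaranteed by $\max_k|\lambda_k|<1$.
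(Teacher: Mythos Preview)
Your argument is correct and is essentially the approach the paper has in mind: establish $\Delta_{\mathbf{i},\mathbf{j}}\not\equiv0$ via the explicit partial derivative at $\underline{\lambda}=0$ (this is exactly the computation the authors indicate, cf.\ the analogous step in \cite[Section~4]{BM2025}), and then upgrade to a uniform order $p$ and constant $\tilde{C}$ using joint continuity and the compactness of $\mathcal{L}_3\times(\text{parameter box})$, which is precisely Hochman's \cite[Proposition~6.24]{Hochman2022} cited in the paper's parallel Lemma~\ref{Trans3}. Your identification of the roles---the derivative at $0$ only supplies non-vanishing, while the uniform $p$ comes from the increasing open cover---matches the intended structure.
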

\begin{proof}
   The lemma can be proven along the same lines as the proof of \cite[Lemma~4.5]{BM2025}. We omit the details.
\end{proof}

\begin{lemma}\label{Trans3} Let $\epsilon>0$ be arbitrary as defined above. Then there exist $p\geq0$ and $\tilde{C}>0$ such that for every  $(\mathbf{i},\mathbf{j})\in \mathcal{L}_1$ and for all $\underline{\lambda}\in [\epsilon,1-\epsilon]^{N_1}\times [\epsilon,B-\epsilon]^{N_{2}-N_{1}}\times [\epsilon,D-\epsilon]^{N_{3}-N_{2}}$, there exists $(m_{i,j})_{(i,j)\in I}\in\mathbb{N}^{N_3}$ such that $m=\sum_{(i,j)\in I}m_{i,j}\leq p$ and
$$\left|\frac{\partial^m\Delta_{\mathbf{i},\mathbf{j}}}{\prod_{(i,j)\in I}\partial^{m_{i,j}}\lambda_{i,j}}(\underline{\lambda})\right|>\tilde{C}.$$ 
\end{lemma}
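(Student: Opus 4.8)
The plan is to prove Lemma~\ref{Trans3} by the same scheme as Lemma~\ref{Trans2} (i.e.\ along the lines of \cite[Lemma~4.5]{BM2025}), the only genuinely new ingredient being the mechanism that guarantees $\Delta_{\mathbf{i},\mathbf{j}}\not\equiv0$ for pairs in $\mathcal{L}_1$. Writing $\eta_i=+1$ for $i\in I_0\cup I_1$ and $\eta_i=-1$ for $i\in I_2$, the natural projection admits the absolutely convergent expansion
$$\Pi(\mathbf{i})=\sum_{n\colon i_n\in I_1\cup I_2}\Big(\prod_{k=1}^{n-1}\eta_{i_k}\lambda_{i_k}\Big)\gamma_{i_n}\lambda_{i_n},$$
which makes $\underline{\lambda}\mapsto\Delta_{\mathbf{i},\mathbf{j}}(\underline{\lambda})=\Pi(\mathbf{i})-\Pi(\mathbf{j})$ real-analytic on the connected open polydisc $\{\underline{\lambda}\colon|\lambda_i|<1\}$, which contains both the origin and the compact region $R:=[\epsilon,1-\epsilon]^{N_1}\times[\epsilon,B-\epsilon]^{N_{2}-N_{1}}\times[\epsilon,D-\epsilon]^{N_{3}-N_{2}}$ (note $B<1$ and $D\leq1$).

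First I would isolate the degree-one homogeneous part of $\Delta_{\mathbf{i},\mathbf{j}}$ in the variables $\lambda_1,\dots,\lambda_{N_3}$. In the series above the monomial attached to position $n$ has total degree $n$, so the degree-one part of $\Pi(\mathbf{i})$ is $\gamma_{i_1}\lambda_{i_1}$ when $i_1\in I_1\cup I_2$ and vanishes when $i_1\in I_0$, while for $\mathbf{j}$ it always equals $\gamma_{j_1}\lambda_{j_1}$ since $j_1\in I_1\cup I_2$ on $\mathcal{L}_1$. Hence the degree-one part of $\Delta_{\mathbf{i},\mathbf{j}}$ is $\mathbf{1}_{\{i_1\in I_1\cup I_2\}}\gamma_{i_1}\lambda_{i_1}-\gamma_{j_1}\lambda_{j_1}$, which is a nonzero linear form in both subcases $i_1\in I_0$ and $i_1\in I_1\cup I_2$, because $i_1\neq j_1$ and all the $\gamma$'s are positive. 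In particular the Taylor series of $\Delta_{\mathbf{i},\mathbf{j}}$ at the origin is nonzero, so $\Delta_{\mathbf{i},\mathbf{j}}\not\equiv0$ on the whole connected analyticity domain, for every $(\mathbf{i},\mathbf{j})\in\mathcal{L}_1$. This is precisely where the hypothesis $j_1\in I_1\cup I_2$ enters, replacing the direct higher-order derivative computation that handles $\mathcal{L}_3$.

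Next I would upgrade this pointwise non-vanishing to the uniform statement by a compactness argument. Recall that $\mathcal{L}_1$ is compact, that $R$ is compact, and that for each fixed multi-index $m$ the map $(\mathbf{i},\mathbf{j},\underline{\lambda})\mapsto\partial^m\Delta_{\mathbf{i},\mathbf{j}}(\underline{\lambda})$ is jointly continuous, since agreement of two sequences on their first $M$ symbols forces $\Pi$ and each fixed-order derivative of $\Pi$ to differ by a quantity that is exponentially small in $M$, uniformly on $R$. Suppose the conclusion failed for every $p$: then for each $n$ there would be $(\mathbf{i}^{(n)},\mathbf{j}^{(n)})\in\mathcal{L}_1$ and $\underline{\lambda}^{(n)}\in R$ with $|\partial^m\Delta_{\mathbf{i}^{(n)},\mathbf{j}^{(n)}}(\underline{\lambda}^{(n)})|\leq1/n$ for all $|m|\leq n$. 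Passing to a convergent subsequence $(\mathbf{i}^{(n)},\mathbf{j}^{(n)})\to(\mathbf{i}^*,\mathbf{j}^*)\in\mathcal{L}_1$ and $\underline{\lambda}^{(n)}\to\underline{\lambda}^*\in R$, joint continuity yields $\partial^m\Delta_{\mathbf{i}^*,\mathbf{j}^*}(\underline{\lambda}^*)=0$ for every $m$; real-analyticity on the connected domain then forces $\Delta_{\mathbf{i}^*,\mathbf{j}^*}\equiv0$, contradicting the previous paragraph. Hence a uniform $p$ exists, and continuity on the compact set $\mathcal{L}_1\times R$ supplies the uniform lower bound $\tilde{C}>0$, giving the required $(m_{i,j})$ with $\sum_{(i,j)}m_{i,j}\leq p$.

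I expect the main obstacle to be exactly the uniformity of the order $p$ across the infinite family $\mathcal{L}_1$: a single low-order derivative such as $\partial_{\lambda_{j_1}}\Delta_{\mathbf{i},\mathbf{j}}$ (whose value at the origin is $-\gamma_{j_1}$) need not stay bounded away from zero on $R$, and the vanishing order at a given point of $R$ could a priori depend on the pair. The compactness-plus-analyticity scheme above is what eliminates this dependence, and its sole prerequisite, namely $\Delta_{\mathbf{i},\mathbf{j}}\not\equiv0$ throughout $\mathcal{L}_1$, is delivered by the nonzero linear part; this, in turn, is where the feature $j_1\in I_1\cup I_2$ (rather than the $j_1\in I_0$ configuration relevant for $\mathcal{L}_3$) is essential.
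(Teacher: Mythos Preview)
Your proposal is correct and follows essentially the same route as the paper: the (commented-out) detailed argument in the paper verifies $\Delta_{\mathbf{i},\mathbf{j}}\not\equiv0$ by computing the first partial derivatives at $\underline{\lambda}=0$ (namely $\partial_{\lambda_{j_1}}\Delta_{\mathbf{i},\mathbf{j}}(0)=-\gamma_{j_1}$ when $i_1\in I_0$, and $\partial_{\lambda_{i_1}}\Delta_{\mathbf{i},\mathbf{j}}(0)=\gamma_{i_1}$ when $i_1\in I_1\cup I_2$), which is exactly your degree-one homogeneous part, and then invokes compactness of $\mathcal{L}_1$ together with Hochman's argument \cite[Proposition~6.24]{Hochman2022}, which is the same compactness-plus-analyticity scheme you spell out.
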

\begin{proof}
    The lemma can be proven along the same lines as the proof of \cite[Lemma~4.6]{BM2025}. We leave the details in this case again for the reader.
\end{proof}

\begin{proof}[Proof of Proposition~\ref{Exceptionforprojected}]
    First, we define a set as follows
$$\mathcal{L}^{n}:=\bigg\{(\mathbf{i},\mathbf{j})\in \Sigma_{n}\times \Sigma_{n}: b_1^{\mathbf{i}}\cap b_1^{\mathbf{j}}=\emptyset~\&~b_1^{\mathbf{i}}\neq\mathbf{i}~\&~b_1^{\mathbf{j}}\neq\mathbf{j}\bigg\}.$$
Then for every $\epsilon>0$, let
\begin{equation}\label{eq:E1}
E_\epsilon=\bigcap_{\eta>0 }\bigcap_{\tilde{N}\geq1}\bigcup_{n\geq \tilde{N}}\bigcup_{(\mathbf{i},\mathbf{j})\in\mathcal{L}^{(n)}}\bigg\{\underline{\lambda}\in[\epsilon,1-\epsilon]^{N_1}\times [\epsilon,B-\epsilon]^{N_{2}-N_{1}}\times [\epsilon,D-\epsilon]^{N_{3}-N_{2}}: |\Delta_{\mathbf{i},\mathbf{j}}(\underline{\lambda})|< \eta^{n}\bigg\}.
\end{equation}
Using Lemma \ref{GenExponentialA1}, Lemma \ref{Trans2} and Lemma \ref{Trans3}, and applying the same technique as in \cite[Proposition 4.7]{BM2025}, we get that 
$\dim_{H}({E}_{\epsilon})\leq N_{3}-1$, and for all $\underline{\lambda}\in [\epsilon,1-\epsilon]^{N_1}\times [\epsilon,B-\epsilon]^{N_{2}-N_{1}}\times [\epsilon,D-\epsilon]^{N_{3}-N_{2}} \setminus {E}_{\epsilon},$  $\exists~~ \eta>0, \exists~\tilde{N}\in \mathbb{N}, \forall~n\geq \tilde{N}, \forall~ (\bold{i},\bold{j})\in (\Sigma_{n}\times\Sigma_{n})\cap \mathcal{L}^n$ such that
  $$|\Delta_{\bold{i},\bold{j}}(\underline{\lambda})|> {\eta}^{n}.$$    

We define another set as follows:
\begin{equation*}\label{eq:G1}
G_\epsilon=\bigcup_{n=1}^\infty\bigcup_{\substack{(\mathbf{i},\mathbf{j})\in\Sigma_n\times\Sigma_n\\ \mathbf{i}\cap\mathbf{j}=\emptyset}}\{\underline{\lambda}\in[\epsilon,1-\epsilon]^{N_1}\times [\epsilon,B-\epsilon]^{N_{2}-N_{1}}\times [\epsilon,D-\epsilon]^{N_{3}-N_{2}}:\lambda_{\mathbf{i}}=\lambda_{\mathbf{j}}\}
\end{equation*}
One can show along the lines of \cite[Lemma~4.8]{BM2025} that $\dim_{H}({G}_{\epsilon})\leq N_{3}-1$. We define the exceptional set $\mathcal{E}$ by $\mathcal{E}:={E}\cup G$, where $E=\cup_{n\geq 1}{E}_{1/n}$ and $G=\cup_{n\geq 1}{G}_{1/n}$. Then one can finish the proof by applying the techniques in \cite[Proposition~4.9]{BM2025}.
\end{proof}

\section{Hausdorff dimension of Massopust's surfaces}

This section is devoted to proving Theorem~\ref{Dimalmost} and Theorem~\ref{main2}. Let us recall some definitions from Section~\ref{sec:massopust}. Consider the equilateral triangle $\Delta$ with vertices $\{(0,0), (1,0), (\frac{1}{2},\frac{\sqrt{3}}{2})\},$ and let $\{\Delta_i\}_{i=1}^{N^2}$ be the uniform triangulation for $N\geq 3$. Consider a data set $\{(q_k,a_k)\}_{k=1}^{L(N)}$ associated with the triangulation $\{\Delta_i\}_{i=1}^{N^2}$, where $L(N)=\frac{(N+1)(N+2)}{2}$. We assume that $a_k=0$ for all $k$ such that the corresponding $q_{k}$ is on the boundary of the triangle $\Delta$. 
For each $i\in \{1,2,\dots, N^2\}$, we denote the value at the left vertex of the horizontal line of $\Delta_i$ by $a_{1}^{i}$, value at the right vertex of the horizontal line of $\Delta_i$ by $a_{2}^{i}$ and value at the other vertex by $a_{3}^{i}$. For each $i\in \{1,2,\dots,N^2\}$, define the similarity map $U_{i}: \Delta\to \Delta_i$ as in \eqref{eq:Ui}, the map $$V_i(x,y,z)=\bold{a}_i x+ b_i y+ s_i z+ c_i$$ as in \eqref{eq:Vi} such that it satisfies the boundary condition \eqref{eq:boundcond}.  Clearly, by using the above conditions, we get
  $$\bold{a}_i=-|a_{1}^{i}-a_{2}^{i}|~\text{and}~{b}_{i}=\frac{2}{\sqrt{3}}\bigg(\frac{-(a_{1}^{i}+a_{2}^{i})}{2}+a_{3}^{i}\bigg)$$
Now, we define an affine IFS $\mathcal{I}:=\{W_i,i\in \{1,2,\dots,N^2\}\}$ on $\mathbb{R}^3$, where the map $W_i: \mathbb{R}^3 \to \mathbb{R}^3$ is defined as $$W_i(x,y,z)=(U_i(x,y), V_i(x,y,z)).$$
	Denote $f^*: \Delta \to \mathbb{R}$ the unique fractal interpolation function of which graph $G({f^*})$ is the attractor of the affine IFS $\mathcal{I}.$

    \begin{note}
  One can also see that the subspace generated by the vector $(0,0,1)$ is invariant under the linear parts of the IFS $\mathcal{I}$. Thus, the IFS $\mathcal{I}$ is not strongly irreducible. So, the dimension theory of self-affine IFS presented in \cite{Rapa2024} is not applicable here. 
\end{note}
     
We assume throughout the paper that $s_i\in (\frac{1}{N},1)$. Thus, by \eqref{eq:affindim} the affinity dimension $t_0$ corresponding to the self-affine IFS $\mathcal{I}$ is the unique solution of the following equation 
$$\sum_{i=1}^{N^2}s_{i}\bigg(\frac{1}{N}\bigg)^{t_0-1}=1.$$

By \cite{Falconer1988}, the affinity dimension $t$ is a natural upper bound for the box dimension of the self-affine set, and we have 
\begin{equation}\label{eq:ubmasop}
    \dim_{H}(G(f^*))\leq \overline{\dim}_{B}(G(f^*))\leq t_0=1+\frac{\log(\sum_{i=1}^{N^2}s_i)}{\log N}
\end{equation} for all parameters.\par 

\subsection{The Furstenberg measure and a sufficient condition} Let $\Tilde{\Delta}$ be the interior of the original equilateral triangle $\Delta.$ Then, one can see that 
	$$W_i(\Tilde{\Delta}\times \mathbb{R})\subset \Tilde{\Delta}\times \mathbb{R},~~ W_i(\Tilde{\Delta}\times \mathbb{R})\cap W_j(\Tilde{\Delta}\times \mathbb{R})=\emptyset$$
	for all $i\ne j\in \{1,2,\dots,N^2\}.$ This implies the IFS $\mathcal{I}$ satisfies the SOSC. 
    
    Let $\bold{p}=(p_1,p_2,\dots,p_{N^2})$ be a probability vector, where $p_i=s_i\big(\frac{1}{N}\big)^{t_{0}-1}$ for every $i\in \{1,2,\dots,N^2\}.$

Define 
$$\mathcal{A}_1:=\{i\in\{1,2,\dots,N^2\}: a_{1}^{i}=a_{2}^{i}\},\quad  \mathcal{A}_2:=\{i\in\{1,2,\dots,N^2\}: a_{1}^{i}>a_{2}^{i}\}$$
$$\mathcal{A}_3:=\{i\in\{1,2,\dots,N^2\}: a_{1}^{i}<a_{2}^{i}\}.$$
By applying \eqref{eq:furst}, we construct a Furstenberg IFS $\mathcal{J}=\{h_1,h_2,\dots,h_{N^2}\}$ on $\mathbb{R}^2$ by the self-affine IFS $\mathcal{I}$ as follows
\begin{align*}
    h_i(x,y)=\begin{cases}
    \begin{aligned}
    \frac{1}{Ns_i}\begin{bmatrix}  1 & 0 \\ 0 & 1 \end{bmatrix}\begin{bmatrix} x \\ y \end{bmatrix}-\frac{1}{s_i}\begin{bmatrix} \bold{a}_i  \\ b_i\end{bmatrix} & \text{if}~a_{1}^{i}\geq a_{2}^{i}\text{ and }1\leq i\leq \frac{N(N+1)}{2},\\[4pt]
       \frac{1}{Ns_i}\begin{bmatrix}  1 & 0 \\ 0 & - 1 \end{bmatrix}\begin{bmatrix} x \\ y \end{bmatrix}-\frac{1}{s_i}\begin{bmatrix} \bold{a}_i  \\ b_i\end{bmatrix} & \text{if}~a_{1}^{i}\geq a_{2}^{i}\text{ and }\frac{N(N+1)}{2}+1\leq i\leq N^2,\\[4pt]
       \frac{1}{Ns_i}\begin{bmatrix} - 1 & 0 \\ 0 & 1 \end{bmatrix}\begin{bmatrix} x \\ y \end{bmatrix}-\frac{1}{s_i}\begin{bmatrix} \bold{a}_i \\ b_i\end{bmatrix} & \text{if}~a_{1}^{i}< a_{2}^{i}\text{ and }1\leq i\leq \frac{N(N+1)}{2},\\[4pt]
       \frac{1}{Ns_i}\begin{bmatrix} - 1 & 0 \\ 0 & - 1 \end{bmatrix}\begin{bmatrix} x \\ y \end{bmatrix}-\frac{1}{s_i}\begin{bmatrix} \bold{a}_i \\ b_i\end{bmatrix} & \text{if}~a_{1}^{i}< a_{2}^{i}\text{ and }\frac{N(N+1)}{2}+1\leq i\leq N^2,
    \end{aligned}
    \end{cases}
\end{align*}
The Furstenberg measure $\mu_F=\sum_{i=1}^{N^2}p_i(h_i)_*\mu_F$ is the unique invariant Borel probability measure corresponding to the IFS $\mathcal{J}$ with probability vector $\bold{p}$. Since the self-affine IFS $\mathcal{I}$ satisfies the SOSC and $s_{i}\in (\frac{1}{N},1)~\forall~i\in \{1,2,\dots, N^2\}$, to verify the equality in \eqref{eq:ubmasop}, we only need to show that $$\dim_{H}(\mu_{F})>3-t_{0}$$ by Theorem~\ref{thm:rapaport}. In particular, to show Theorem~\ref{Dimalmost} and Theorem~\ref{main2}, we will prove the following:

\begin{proposition}\label{prop:furstdimae} For each $i\in \{1,2,\dots,N^2\}$, let $a_{1}^{i}\ne a_{2}^{i}$
if $a_{1}^{i}$ and $a_{2}^{i}$ both are not on the boundary of original triangle $\Delta$. Then, 
$$\dim_{H}(\mu_{F})>3-t_0$$
for Lebesgue almost every scaling parameter $\underline{s}\in (\frac{1}{N},1)^{\#\mathcal{A}_1}\times (\frac{1}{N B},1)^{\#\mathcal{A}_2}\times (\frac{1}{N D},1)^{\#\mathcal{A}_3}$. 
\end{proposition}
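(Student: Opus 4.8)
The plan is to reduce everything to the one‑dimensional projection of $\mu_F$ onto the $x$–axis, which turns out to be exactly a self‑similar measure of the type $\mathcal{G}$ studied in Section~\ref{sec:CFS}. The first observation is that each Furstenberg map $h_i$ in \eqref{eq:furst} is a genuine similarity of $\mathbb{R}^2$: its linear part $\frac{1}{Ns_i}D_i$ with $D_i=\mathrm{diag}(\pm1,\pm1)$ scales both coordinates by the same factor $\frac{1}{Ns_i}$, and in particular it is diagonal. Writing $\pi$ for the coordinate projection $(x,y)\mapsto x$, we get $\pi\circ h_i=h_i^x\circ\pi$ with $h_i^x(x)=\pm\frac{1}{Ns_i}x-\frac{\mathbf{a}_i}{s_i}$, the sign being $+$ on $\mathcal{A}_1\cup\mathcal{A}_2$ and $-$ on $\mathcal{A}_3$. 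Hence $\mu^x:=\pi_*\mu_F$ is the self‑similar measure of $\{h_i^x\}_{i=1}^{N^2}$ with weights $\mathbf{p}$, and since $\pi$ is $1$‑Lipschitz, $\dim_{H}\mu_F\geq\dim_{H}\mu^x$. Thus it suffices to prove $\dim_{H}\mu^x>3-t_0$.

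Next I would identify $\{h_i^x\}$ with $\mathcal{G}$. Using $\mathbf{a}_i=-|a_{1}^{i}-a_{2}^{i}|$, the map $h_i^x$ equals $\lambda_i x$ for $i\in\mathcal{A}_1$, $\lambda_i x+\gamma_i\lambda_i$ for $i\in\mathcal{A}_2$, and $-\lambda_i x+\gamma_i\lambda_i$ for $i\in\mathcal{A}_3$, where $\lambda_i=\frac{1}{Ns_i}$ and $\gamma_i=N|a_{1}^{i}-a_{2}^{i}|$. This is precisely $\mathcal{G}$ under the identification $I_0=\mathcal{A}_1$, $I_1=\mathcal{A}_2$, $I_2=\mathcal{A}_3$; since $\gamma_k/\gamma_i=|a_{1}^{k}-a_{2}^{k}|/|a_{1}^{i}-a_{2}^{i}|$, the constants $B,D$ of Section~\ref{sec:CFS} coincide with those in the statement. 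The change of variables $s_i\mapsto\lambda_i=\frac{1}{Ns_i}$ is a diffeomorphism mapping the parameter box $(\frac1N,1)^{\#\mathcal{A}_1}\times(\frac{1}{NB},1)^{\#\mathcal{A}_2}\times(\frac{1}{ND},1)^{\#\mathcal{A}_3}$ onto $(\frac1N,1)^{\#\mathcal{A}_1}\times(\frac1N,B)^{\#\mathcal{A}_2}\times(\frac1N,D)^{\#\mathcal{A}_3}$, a subset of the region in Proposition~\ref{Exceptionforprojected}, and it preserves Lebesgue‑null sets. Therefore, for Lebesgue‑almost every $\underline{s}$ the system $\{h_i^x\}$ satisfies the ESC for CFS.

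With exponential separation secured, I would invoke the dimension theorem of \cite{BM2025} for systems obeying the ESC for CFS, which provides the Hochman‑type lower bound $\dim_{H}\mu^x\geq\min\{1,h/\chi\}$, where $\chi=\sum_i p_i\log(Ns_i)$ is the Lyapunov exponent and $h$ is the random‑walk entropy of $\{h_i^x\}$ with weights $\mathbf{p}$. The delicate point is that the maps indexed by $I_0=\mathcal{A}_1$ share the fixed point $0$ and hence commute, producing bona fide exact overlaps, so that $h$ is strictly smaller than $H(\mathbf{p})=-\sum_i p_i\log p_i$. Coarsening the block data (remembering, for each maximal $I_0$‑run, only that it is such a run rather than the internal order of its symbols) gives the usable lower bound $h\geq-P_0\log P_0-\sum_{j\in\mathcal{A}_2\cup\mathcal{A}_3}p_j\log p_j$ with $P_0=\sum_{i\in\mathcal{A}_1}p_i$, since the corresponding coarse sequence is an i.i.d.\ function of the block‑reduced word and its symbolwise entropy is a lower bound for $h$.

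It then remains to verify $\min\{1,h/\chi\}>3-t_0$. Because $s_i>\frac1N$ for all $i$ and there are $N^2$ maps, $\sum_i s_i>N$, whence $t_0=1+\frac{\log(\sum_i s_i)}{\log N}>2$ and so $3-t_0<1$; it therefore suffices to show $h>(3-t_0)\chi$. Substituting $p_i=s_iN^{1-t_0}$ together with the coarse lower bound reduces this to an explicit inequality among $\sum_i p_i\log s_i$, $P_0$, and $t_0$, which (as a sanity check on the symmetric case $s_i\equiv s$, $N=3$, where $p_i\equiv\frac19$) already forces the ratio above $1$. I expect this entropy–Lyapunov estimate to be the main obstacle: the commutativity of the $\mathcal{A}_1$‑maps destroys the entire ordering entropy of the $I_0$‑runs, so one cannot simply use $H(\mathbf{p})$, and the verification must exploit that the non‑commuting maps in $\mathcal{A}_2\cup\mathcal{A}_3$ — which are nonempty and, through the constraints defining $B,D$ and the hypothesis $a_1^i\neq a_2^i$ on interior edges, carry definite weight — inject enough entropy. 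Once $h>(3-t_0)\chi$ is established, $\dim_{H}\mu_F\geq\dim_{H}\mu^x>3-t_0$, which by Theorem~\ref{thm:rapaport} is exactly the required conclusion.
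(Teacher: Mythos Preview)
Your approach is essentially the paper's: project $\mu_F$ to the $x$-axis, identify the projected IFS $\mathcal{J}_X$ with a system $\mathcal{G}$ of the common-fixed-point type from Section~\ref{sec:CFS}, apply Proposition~\ref{Exceptionforprojected} to get the ESC for CFS for Lebesgue-a.e.\ parameters (transported via the diffeomorphism $s_i\mapsto\lambda_i=1/(Ns_i)$), and then invoke the dimension theorem of \cite{BM2025} to reduce everything to an entropy--Lyapunov inequality. The identifications $I_0=\mathcal{A}_1$, $I_1=\mathcal{A}_2$, $I_2=\mathcal{A}_3$ and $\gamma_i=N|a_1^i-a_2^i|$ are exactly as in the paper.

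The one substantive divergence is the entropy step. The paper does not use an ad hoc coarsening bound; it quotes \cite[Theorem~5.1, Theorem~3.5]{BM2025} to get
\[
\dim_H(P_{X*}\mu_F)=\min\Bigl\{1,\ \frac{H(\mathbf{p})+\Phi(\mathbf{p})}{\chi}\Bigr\},
\]
and then invokes \cite[Proposition~2.2]{BM2025} for the explicit lower bound
\[
\Phi(\mathbf{p})\ \ge\ \sum_{i\in\mathcal{A}_1}p_i\log\Bigl(p_i+\sum_{j\in\mathcal{A}_2\cup\mathcal{A}_3}p_j\Bigr).
\]
This is strictly sharper than your coarsening bound $h\ge-P_0\log P_0-\sum_{j\notin\mathcal{A}_1}p_j\log p_j$: writing $P_+=\sum_{j\in\mathcal{A}_2\cup\mathcal{A}_3}p_j$, one has term by term $\log\frac{p_i+P_+}{p_i}\ge\log\frac{1}{P_0}$ since $p_i\le P_0$. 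The paper then carries the inequality through an explicit algebraic chain that crucially uses the structural count $\#\mathcal{A}_1=N+2$ (equivalently $\#(\mathcal{A}_2\cup\mathcal{A}_3)=N^2-N-2$), a fact about the triangulation which you never invoke. Your sanity check in the symmetric $N=3$ case is fine, but you correctly flag the general verification as ``the main obstacle'' and do not carry it out; with the weaker bound you propose it is not obvious the inequality holds uniformly over the whole parameter box. So the architecture is right, but the entropy input you suggest is coarser than what the paper actually needs, and the step you leave open is precisely where the paper's proof has its real content.
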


\begin{proposition}\label{Dimfurstenberg}
		Let	$s_i\in \bigg(\frac{2}{3},1\bigg)~\forall~i\in \{1,2,\dots,9\}$. Suppose that $\max\{s_{5},s_{8}\}\leq \min \{s_{4},s_{7}\}$ and $s_2\leq s_9.$  Then, $$\dim_{H}(\mu_F)>3-t_0,$$
		where $t_0$ is the affinity dimension of the IFS $\mathcal{I}$ in \eqref{eq:ubmasop}.
	\end{proposition}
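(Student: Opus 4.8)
The plan is to bound $\dim_H(\mu_F)$ from below by means of the overlapping-number estimate of Proposition~\ref{prop:covering} applied to the planar Furstenberg IFS $\mathcal{J}=\{h_1,\dots,h_9\}$, and then to verify that this lower bound strictly exceeds $3-t_0$. By Theorem~\ref{thm:rapaport} the latter forces $\dim_H(G(f^*))=t_0$, which together with the upper bound \eqref{eq:ubmasop} gives the asserted value.

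First I would write down $\mathcal{J}$ explicitly in the case $N=3$. Each $h_i$ is a planar similarity $v\mapsto \tfrac{1}{3s_i}D_iv+t_i$, where $D_i=\operatorname{diag}(\pm1,\pm1)$ is the (symmetric) orthogonal part $U_i^T$ and the contraction ratio is $\lambda_i=\tfrac{1}{3s_i}\in(\tfrac13,\tfrac12)$ because $s_i\in(\tfrac23,1)$; hence $\lambda_{\max}=\tfrac{1}{3\min_i s_i}$, and with $p_i=s_i3^{1-t_0}$ one has $p_{\min}=(\min_i s_i)3^{1-t_0}$. I would then record the fixed-point configuration: the three maps $h_1,h_3,h_6$ (the members of $\mathcal{A}_1$ coming from up-pointing triangles, for which the translation part vanishes) all fix the origin; $h_2,h_9$ fix points on the negative $y$-axis; and $h_4,h_5,h_7,h_8$ (indexed by $\mathcal{A}_2\cup\mathcal{A}_3=\{4,5,7,8\}$) fix points with positive first coordinate.

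The core of the argument is the determination of the overlapping number $Q=\min_{x\in K}\#\{i:\ x\notin h_i(K)\}$ for the attractor $K$ of $\mathcal{J}$. I would first exhibit a compact convex region $R$ adapted to the above configuration with $h_i(R)\subseteq R$ for all $i$, so that $K\subseteq R$ and $h_i(K)\subseteq h_i(R)$; then, by locating the nine affine images $h_i(R)$ explicitly, show that no point of $R$ belongs to more than $M:=9-Q$ of them, for an explicit small $M$ (the triple overlap of $h_1(R),h_3(R),h_6(R)$ near the origin being the extreme case). This is exactly where the hypotheses enter: the inequalities $\max\{s_5,s_8\}\le\min\{s_4,s_7\}$ and $s_2\le s_9$ force the correct nesting and separation of the pieces attached to $\mathcal{A}_2=\{5,8\}$, to $\mathcal{A}_3=\{4,7\}$, and of the two $y$-axis pieces $h_2(R),h_9(R)$, preventing any further stacking. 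Carrying this case analysis out pins down $Q$.

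With $Q$ in hand, Proposition~\ref{prop:covering} yields
\[
\dim_H(\mu_F)\ \ge\ \frac{\log\bigl(1-Qp_{\min}\bigr)}{\log\lambda_{\max}},
\]
and it remains to check that the right-hand side exceeds $3-t_0$. Since $\log\lambda_{\max}<0$, this is equivalent to the elementary inequality
\[
Q\,p_{\min}\ >\ 1-\bigl(3\min_i s_i\bigr)^{\,t_0-3},
\]
which I would verify using only the defining relation $\sum_{i=1}^{9}s_i3^{1-t_0}=1$ together with $s_i\in(\tfrac23,1)$ (note $t_0\in(2,3)$, so the right-hand side lies in $(0,1)$). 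I expect the overlap computation establishing $Q$ uniformly over the admissible parameter region to be the main obstacle, as it requires a careful planar case analysis; the closing inequality should then follow by monotonicity estimates, the tightest case occurring near the corner $s_i=\tfrac23$.
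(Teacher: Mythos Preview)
Your plan is correct and follows essentially the same route as the paper: bound $\dim_H(\mu_F)$ via the overlapping-number estimate of Proposition~\ref{prop:covering}, then check the resulting lower bound exceeds $3-t_0$. The paper executes the overlap count not with a single planar convex region but by projecting $\mathcal{J}$ onto the coordinate axes (Lemmas~\ref{Xaxis} and~\ref{Yaxis}), using the invariant rectangle $[0,\tilde b]\times\tilde I$ to show $\#C_x\ge 3$ (i.e.\ $Q=3$); the closing inequality is then reduced, via $s_{\min}>\tfrac23$ and $s_{\max}<1$, to the quadratic condition $3^{2t_0}-27\cdot 3^{t_0}+162>0$, equivalently $3^{t_0}>18$, which holds since $\sum_i s_i>6$.
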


\begin{proof}[Proof of Theorem~\ref{Dimalmost}]
    The claim follows by \eqref{eq:ubmasop}, and the combination of Theorem~\ref{thm:rapaport} and Proposition~\ref{prop:furstdimae}.
\end{proof}

\begin{proof}[Proof of Theorem~\ref{main2}]
    The claim follows by \eqref{eq:ubmasop}, and the combination of Theorem~\ref{thm:rapaport} and Proposition~\ref{Dimfurstenberg}.
\end{proof}

\begin{remark}
  We note that the method is not applicable in every configuration. In the above construction, if we consider $s_i=s$ for every $i\in \{1,2,\dots,N^2\}$ and $a_k=a$ for all $k\in \{1,2,\dots,L(N)\},$ then for the large value of $N$,
   \begin{equation}\label{notsatisfy}
    \dim_{H}(\mu_{F})\not>3-t_{0}=\frac{-\log s}{\log N}~ \forall~ s\in \bigg(\frac{1}{N},1\bigg).   
   \end{equation}
   In this consideration, there are $9$ different mappings in Furstenberg IFS $\mathcal{J}$ with multiplicity $\frac{(N-3)(N-2)}{2}+3, \frac{(N-4)(N-3)}{2}, (N-2), (N-2), (N-2), (N-2), (N-3), (N-2), 1$, respectively. 
 Examples:  For $N=100, \dim_{H}(\mu_{F})< \frac{-\log s}{\log N}~ \text{for}~s\in (0.042, 0.237).$\\For $N=1000, \dim_{H}(\mu_{F})< \frac{-\log s}{\log N}~ \text{for}~s\in (0.001, 0.430).$ \\For $N=10000, \dim_{H}(\mu_{F})< \frac{-\log s}{\log N}~ \text{for}~s\in (0.0001, 0.461).$\par 
  Although, if we consider some $s_i\ne s_j$ for some $i,j\in \{1,2,\dots,N^2\}$ and $a_k=a$ for all $k\in \{1,2,\dots,L(N)\},$ then for the large value of $N$, the same situation as in \eqref{notsatisfy} occurs. For example, we consider $\frac{(N-3)(N-2)}{2}+3$ many mappings with $s_i=s+0.001,\frac{(N-4)(N-3)}{2}$ many mappings with $s_i=s+0.002, 5(N-2)$ many mappings with $s_i=s+0.003, (N-3)$ many mappings with $s_i=s+0.004$ and $1$ map with $s_i=s+0.002$. In this consideration, we have the followings:\par 
  For $N=100, \dim_{H}(\mu_{F})< 2-\frac{\log\sum_{i=1}^{N^2} s_i}{\log N}~ \text{for}~s\in (0.035, 0.279).$\par
  For $N=100000, \dim_{H}(\mu_{F})< 2-\frac{\log \sum_{i=1}^{N^2} s_i}{\log N}~ \text{for}~s\in (0.00001, 0.463).$
\end{remark}

\begin{remark}
    Let us also note that our method might be applied to other data sets when some of the data values over the horizontal edges of the triangles coincide. Still, there are enough maps where there are no coincidences, and in particular, there are enough maps that do not share the same fixed point, which ensures that the lower bound for the dimension of the Furstenberg measure might hold. 
\end{remark}

\subsection{Dimension for almost every parameter}

Let $f_i$ be the projection of $h_i$ on the $X$-axis. Let $\mathcal{J}_{X}=\{f_1,f_2,\dots,f_{N^2}\}$ be the projection of the IFS $\mathcal{J}$ on the $X$-axis. Then, for $i\in \{1,2,\dots,N^2\}$ the map $f_{i}$ is as follows
$$f_{i}(x)=\begin{cases}
\frac{x}{Ns_i} &\text{if} ~a_{1}^{i}= a_{2}^{i}\\
  \frac{x}{Ns_i}-\frac{\bold{a}_i}{s_i} &\text{if} ~a_{1}^{i}\geq  a_{2}^{i} \\ \frac{-x}{Ns_i}-\frac{\bold{a}_i}{s_i} &\text{if}~a_{1}^{i}<  a_{2}^{i}.
\end{cases}$$
Set $\lambda_{i}=\frac{1}{Ns_i}$ and $\gamma_{i}=-\bold{a}_{i}N=|a_{1}^{i}-  a_{2}^{i}|N>0$, one can see that the IFS $\{f_i\}_{i=1}^{N^2}$ is of type considered in Section~\ref{sec:CFS}. Since $s_i\in (\frac{1}{N},1)$, we have $\lambda_i\in (\frac{1}{N},1)$ for all $i\in \{1,2,\dots,N^2\}.$  
 Thus, we have 
$$\mathcal{J}_{X}=\{f_{i}(x)=\lambda_i x\}_{i\in \mathcal{A}_1}\cup \{f_{i}(x)=\lambda_i x +\gamma_{i} \lambda_i\}_{i\in \mathcal{A}_2}\cup \{f_{i}(x)=-\lambda_i x +\gamma_{i} \lambda_i\}_{i\in \mathcal{A}_3}$$

\begin{proof}[Proof of Proposition~\ref{prop:furstdimae}]
Let ${P_{X*}}\mu_{F}$ be the projection of the measure $\mu_{F}$ on the $X$-axis. The measure ${P_{X*}}\mu_{F}$ is the invariant measure corresponding to the IFS $\mathcal{J}_{X}$ with probability vector $\bold{p}$. By Proposition \ref{Exceptionforprojected}, there exists a set $\mathcal{E}\subset (0,1)^{\#\mathcal{A}_1}\times (0,B)^{\#\mathcal{A}_2}\times (0,D)^{\#\mathcal{A}_3}$ such that $\dim_{H}\mathcal{E}\leq \#\mathcal{A}_1+\#\mathcal{A}_2+\#\mathcal{A}_3-1$ such that the IFS $\mathcal{J}_{X}$ satisfies ESC for CFS for every parameters $\underline{\lambda}\in (0,1)^{\#\mathcal{A}_1}\times (0,B)^{\#\mathcal{A}_2}\times (0,D)^{\#\mathcal{A}_3}\setminus\mathcal{E}$. Thus, by \cite[Theorem 5.1,Theorem 3.5]{BM2025}, we get 
$$\dim_{H}({P_{X*}}\mu_{F})=\min\bigg\{1,\frac{-\sum_{i=1}^{N^2}p_i\log p_i +\Phi(\bold{p})}{-\sum_{i=1}^{N^2}p_i\log \lambda_i}\bigg\}$$
for every parameters $\underline{s}\in (\frac{1}{N},1)^{\#\mathcal{A}_1}\times (\frac{1}{NB},1)^{\#\mathcal{A}_2}\times (\frac{1}{ND},1)^{\#\mathcal{A}_3}\setminus\mathcal{E}$. Now, by \cite[Proposition 2.2]{BM2025}, we have 
$$\Phi(\bold{p})\geq \sum_{i\in \mathcal{A}_1}p_{i}\log\bigg(p_i+\sum_{j\in \mathcal{A}_2\cup \mathcal{A}_3}p_j\bigg).$$
Given that for each $i\in \{1,2,\dots,N^2\}$, $a_{1}^{i}\ne a_{2}^{i}$
if $a_{1}^{i}$ and $a_{2}^{i}$ both are not  on the boundary of original triangle $\Delta$. Under this consideration, one can see that  
$$\#\mathcal{A}_1=N+2~ \text{and}~ \#\mathcal{A}_2+\#\mathcal{A}_3=N^2-N-2.$$
Now, our aim is to show that 
$$\frac{-\sum_{i=1}^{N^2}p_i\log p_i +\Phi(\bold{p})}{-\sum_{i=1}^{N^2}p_i\log \lambda_i}> 3-t_0=2-\frac{\log \sum_{i=1}^{N^2}s_i}{\log N}.$$
Since $\lambda_i=\frac{1}{Ns_i}$ and $p_i=s_i(\frac{1}{N})^{t_0-1}$, we have 
\begin{align*}
\frac{-\sum_{i=1}^{N^2}p_i\log p_i +\Phi(\bold{p})}{-\sum_{i=1}^{N^2}p_i\log \lambda_i}&>2-\frac{\log \sum_{i=1}^{N^2}s_i}{\log N}\\\Leftarrow \frac{-\sum_{i=1}^{N^2}p_i\log p_i +\sum_{i\in \mathcal{A}_1}p_{i}\log\bigg(p_i+\sum_{j\in \mathcal{A}_2\cup \mathcal{A}_3}p_j\bigg)}{\sum_{i=1}^{N^2}p_i\log Ns_i}&>2-\frac{\log \sum_{i=1}^{N^2}s_i}{\log N}\\\Leftarrow \frac{-\sum_{i=1}^{N^2}s_i\log p_i +\sum_{i\in \mathcal{A}_1}s_{i}\log\bigg(p_i+\sum_{j\in \mathcal{A}_2\cup \mathcal{A}_3}p_j\bigg)}{\sum_{i=1}^{N^2}s_i\log Ns_i}&>2-\frac{\log \sum_{i=1}^{N^2}s_i}{\log N}\\\Leftarrow \frac{\sum_{i\in \mathcal{A}_1}s_{i}\log\bigg(\frac{s_i+\sum_{j\in \mathcal{A}_2\cup \mathcal{A}_3}s_j}{s_i}\bigg)+\sum_{i\in \mathcal{A}_2\cup \mathcal{A}_3}s_i\log \bigg(\frac{\sum_{i=1}^{N^2}s_i}{s_i}\bigg) }{\sum_{i=1}^{N^2}s_i\log Ns_i}&>2-\frac{\log \sum_{i=1}^{N^2}s_i}{\log N}\\\Leftarrow \sum_{i\in \mathcal{A}_1}s_{i}\log\bigg(\frac{s_i+\sum_{j\in \mathcal{A}_2\cup \mathcal{A}_3}s_j}{s_i}\bigg)+\sum_{i\in \mathcal{A}_2\cup \mathcal{A}_3}s_i\log \bigg(\frac{\sum_{i=1}^{N^2}s_i}{s_i}\bigg) \\>\sum_{i=1}^{N^2}s_i\log (Ns_i)^2- \sum_{i=1}^{N^2}&s_i\log (Ns_i)\frac{\log \sum_{i=1}^{N^2}s_i}{\log N}\\\Leftarrow \sum_{i\in \mathcal{A}_1}s_{i}\log\bigg(\frac{s_i+\sum_{j\in \mathcal{A}_2\cup \mathcal{A}_3}s_j}{N^2s_{i}^{3}}\bigg)+\sum_{i\in \mathcal{A}_2\cup \mathcal{A}_3}s_i\log \bigg(\frac{\sum_{i=1}^{N^2}s_i}{N^2s_{i}^{3}}\bigg) \\>- \sum_{i=1}^{N^2}&s_i\log Ns_i\frac{\log \sum_{i=1}^{N^2}s_i}{\log N}\\\Leftarrow \frac{\log (N s_{\min})}{\log N}\bigg(\sum_{i\in \mathcal{A}_1}s_{i}\log\bigg(\frac{s_i+\sum_{j\in \mathcal{A}_2\cup \mathcal{A}_3}s_j}{N^2s_{i}^{3}}\sum_{i=1}^{N^2}s_i\bigg)+\sum_{i\in \mathcal{A}_2\cup \mathcal{A}_3}&s_i\log \bigg(\frac{(\sum_{i=1}^{N^2}s_i)^2}{N^2s_{i}^{3}}\bigg)\bigg)>0 \\\Leftarrow \sum_{i\in \mathcal{A}_1}s_{i}\log\bigg(\frac{s_i+\sum_{j\in \mathcal{A}_2\cup \mathcal{A}_3}s_j}{N^2s_{i}^{3}}\sum_{i=1}^{N^2}s_i\bigg)+\sum_{i\in \mathcal{A}_2\cup \mathcal{A}_3}s_i\log \bigg(&\frac{(\sum_{i=1}^{N^2}s_i)^2}{N^2s_{i}^{3}}\bigg)>0.
\end{align*}
Clearly $\frac{(\sum_{i=1}^{N^2}s_i)^2}{N^2s_{i}^{3}}\geq \frac{1}{s_{i}^{3}}>1$. This proves our claim. Since $3-t_0\in (0,1)$, we have 
$$\dim_{H}(\mu_{F})\geq \dim_{H}({P_{X*}}\mu_{F})>3-t_{0}$$
for Lebesgue almost every scaling parameter $\underline{s}\in (\frac{1}{N},1)^{\#\mathcal{A}_1}\times (\frac{1}{N B},1)^{\#\mathcal{A}_2}\times (\frac{1}{N D},1)^{\#\mathcal{A}_3}$. The proof is completed by Theorem~\ref{thm:rapaport}.
\end{proof}

\subsection{Hausdorff dimension of the FIS in the case of $N=3$ for every parameter}
Here, we prove Theorem \ref{main2} by computing the overlapping number. 

Since the upper bound $$\dim_{H}(G(f^*))\leq \overline{\dim}_{B}(G(f^*)) \leq t=1+\frac{\log(\sum_{i=1}^{9}s_i)}{\log(3)}$$ holds for every parameter value, it is enough to show the lower bound.

	For the lower bound, consider the corresponding Furstenberg IFS $\mathcal{J}=\{h_1,h_2,\dots,h_9\}$, where $h_i: \mathbb{R}^2\to \mathbb{R}^2 $ are defined as follows:
	\begin{align*}
		h_1(x,y)=\frac{1}{3s_1}\begin{bmatrix} 1 & 0 \\ 0 & 1 \end{bmatrix}\begin{bmatrix} x \\ y \end{bmatrix}-\frac{1}{s_1}\begin{bmatrix} 0 \\ 0\end{bmatrix}, \quad h_2(x,y)=\frac{1}{3s_2}\begin{bmatrix} 1 & 0 \\ 0 & 1 \end{bmatrix}\begin{bmatrix} x \\ y \end{bmatrix}-\frac{1}{s_2}\begin{bmatrix} 0 \\ \frac{2a}{\sqrt{3}}\end{bmatrix},
	\end{align*}
	\begin{align*}
		h_3(x,y)=\frac{1}{3s_3}\begin{bmatrix} 1 & 0 \\ 0 & 1 \end{bmatrix}\begin{bmatrix} x \\ y \end{bmatrix}-\frac{1}{s_3}\begin{bmatrix} 0 \\ 0\end{bmatrix}, \quad h_4(x,y)=\frac{1}{3s_4}\begin{bmatrix} -1 & 0 \\ 0 & 1 \end{bmatrix}\begin{bmatrix} x \\ y \end{bmatrix}-\frac{1}{s_4}\begin{bmatrix} -a \\ -\frac{a}{\sqrt{3}}\end{bmatrix},
	\end{align*}
	\begin{align*}
		h_5(x,y)=\frac{1}{3s_5}\begin{bmatrix} 1 & 0 \\ 0 & 1 \end{bmatrix}\begin{bmatrix} x \\ y \end{bmatrix}-\frac{1}{s_5}\begin{bmatrix} -a \\ -\frac{a}{\sqrt{3}} \end{bmatrix}, \quad h_6(x,y)=\frac{1}{3s_6}\begin{bmatrix} 1 & 0 \\ 0 & 1 \end{bmatrix}\begin{bmatrix} x \\ y \end{bmatrix}-\frac{1}{s_6}\begin{bmatrix} 0 \\ 0\end{bmatrix},
	\end{align*}
	\begin{align*}
		h_7(x,y)=\frac{1}{3s_7}\begin{bmatrix} -1 & 0 \\ 0 & -1 \end{bmatrix}\begin{bmatrix} x \\ y \end{bmatrix}-\frac{1}{s_7}\begin{bmatrix} -a \\ -\frac{a}{\sqrt{3}} \end{bmatrix}, \quad h_8(x,y)=\frac{1}{3s_8}\begin{bmatrix} 1 & 0 \\ 0 & -1 \end{bmatrix}\begin{bmatrix} x \\ y \end{bmatrix}-\frac{1}{s_8}\begin{bmatrix} -a \\ -\frac{a}{\sqrt{3}}\end{bmatrix},
	\end{align*}
	\begin{align*}
		h_9(x,y)=\frac{1}{3s_9}\begin{bmatrix} 1 & 0 \\ 0 & -1 \end{bmatrix}\begin{bmatrix} x \\ y \end{bmatrix}-\frac{1}{s_9}\begin{bmatrix} 0 \\ \frac{2a}{\sqrt{3}} \end{bmatrix}.
	\end{align*}
       Let $\mu_F=\sum_{i=1}^9s_i\left(\frac{1}{3}\right)^{t-1}(h_i)_*\mu_F$ be the invariant Borel probability measure for the IFS $\mathcal{J}$ with probabilities $p_i=s_i\left(\frac{1}{3}\right)^{t-1}.$ By Theorem~\ref{thm:rapaport}, to show that $$\dim_{H}(G(f^*))\geq t=1+\frac{\log(\sum_{i=1}^{9}s_i)}{\log(3)},$$
       it is enough to prove that $$\dim_{H}(\mu_{F})> 3-t.$$
    
    Now, we will estimate the Hausdorff dimension of the Furstenberg measure.
    
    Let $f_i$ be the projection of $h_i$ on the $X$-axis. Let $\mathcal{J}_{X}=\{f_1,f_2,\dots,f_9\}$ be the projection of the IFS $\mathcal{J}$ on the $X$-axis. Precisely, the maps $f_i's$ are as follows:
	$$f_1(x)=\frac{x}{3s_1}, \quad f_2(x)=\frac{x}{3s_2}, \quad f_3(x)=\frac{x}{3s_3},\quad f_6(x)=\frac{x}{3s_6},\quad f_9(x)=\frac{x}{3s_9},$$
	$$f_4(x)=-\frac{x}{3s_4}+\frac{a}{s_4}, \quad f_5(x)=\frac{x}{3s_5}+\frac{a}{s_5}, \quad f_7(x)=-\frac{x}{3s_7}+\frac{a}{s_7}, \quad f_8(x)=\frac{x}{3s_8}+\frac{a}{s_8}.$$ The fixed point of the map $f_i$ is denoted by $\text{Fix}(f_i)$ for all $i\in \{1,2,\dots,9\}.$ Thus, we have 
	$$\text{Fix}(f_1)=\text{Fix}(f_2)=\text{Fix}(f_3)=\text{Fix}(f_6)=\text{Fix}(f_9)=0,$$
	$$\text{Fix}(f_4)=\frac{3a}{3s_4+1}, \quad \text{Fix}(f_5)=\frac{3a}{3s_5-1}, \quad \text{Fix}(f_7)=\frac{3a}{3s_7+1},\quad \text{Fix}(f_8)=\frac{3a}{3s_8-1}.$$
	Without loss of generality, we assume that $s_5\leq s_8$.
	\begin{lemma}\label{Xaxis} 
		Let $s_i\in \bigg(\frac{2}{3},1\bigg)~~\forall~~i\in \{1,2,\dots,9\}$.  Let $[\Tilde{a},\Tilde{b}]$ be the invariant interval for the IFS $\mathcal{J}_{X}.$ Then, $\Tilde{a}=0$ and $\Tilde{b}=\text{Fix}(f_5).$ Moreover, if $\max\{s_{5},s_{8}\}\leq \min \{s_{4},s_{7}\}$, then
		\begin{equation}\label{eq3.1}
			(f_4[0,\Tilde{b}]\cup f_7[0,\Tilde{b}])\cap  (f_5[0,\Tilde{b}]\cup f_8[0,\Tilde{b}])=\emptyset.  
		\end{equation}
		
	\end{lemma}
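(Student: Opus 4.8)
The plan is to first pin down the invariant interval and then reduce the disjointness in \eqref{eq3.1} to an elementary comparison of endpoints. Throughout I would assume without loss of generality that $a>0$ (the case $a<0$ is the mirror image, with $0$ and the fixed points reflected through the origin, and the statement becomes the symmetric one on $[\tilde b,0]$). Since $s_5\le s_8$ and $3s_5-1,3s_8-1\in(1,2)$, the first thing to record is the ordering $\text{Fix}(f_5)=\frac{3a}{3s_5-1}\ge\frac{3a}{3s_8-1}=\text{Fix}(f_8)$, so that $\text{Fix}(f_5)$ is the larger of the two positive fixed points; note also $\text{Fix}(f_4),\text{Fix}(f_7)\in(\tfrac34 a,a)$ while $\text{Fix}(f_5),\text{Fix}(f_8)\in(\tfrac32 a,3a)$.

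For the first assertion I would show that $[0,\tilde b]$ with $\tilde b=\text{Fix}(f_5)$ is forward invariant, i.e. $f_i[0,\tilde b]\subseteq[0,\tilde b]$ for every $i$. The five maps $f_1,f_2,f_3,f_6,f_9$ fix $0$ and have slope $\tfrac{1}{3s_i}\in(\tfrac13,\tfrac12)$, so they send $[0,\tilde b]$ to $[0,\tfrac{\tilde b}{3s_i}]\subseteq[0,\tilde b]$. The map $f_5$ fixes $\tilde b$ and is increasing, so $f_5[0,\tilde b]=[\tfrac{a}{s_5},\tilde b]$. For the increasing map $f_8$ the only thing to check is $f_8(\tilde b)\le\tilde b$, which rearranges exactly to $\text{Fix}(f_8)\le\text{Fix}(f_5)$ and hence holds. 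For the reflecting maps $f_4,f_7$ I would check the two endpoints separately: $f_4(0)=\tfrac{a}{s_4}\le\tilde b$ follows from $s_5-s_4<\tfrac13$ (a consequence of $s_4>\tfrac23$, $s_5<1$), while $f_4(\tilde b)\ge0$ follows from $\tilde b<3a$; the same works for $f_7$. Since $0=\text{Fix}(f_1)$ and $\tilde b=\text{Fix}(f_5)$ both lie in the attractor $K$ of $\mathcal{J}_X$, and $K\subseteq[0,\tilde b]$ because $[0,\tilde b]$ is forward invariant, the convex hull of $K$ is exactly $[0,\tilde b]$; thus $\tilde a=0$ and $\tilde b=\text{Fix}(f_5)$.

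For the \emph{moreover} part I would simply write down the four images explicitly, using that $f_4,f_7$ are decreasing and $f_5,f_8$ are increasing:
\[
f_4[0,\tilde b]=\Bigl[\tfrac{a}{s_4}-\tfrac{\tilde b}{3s_4},\tfrac{a}{s_4}\Bigr],\quad f_7[0,\tilde b]=\Bigl[\tfrac{a}{s_7}-\tfrac{\tilde b}{3s_7},\tfrac{a}{s_7}\Bigr],\quad f_5[0,\tilde b]=\Bigl[\tfrac{a}{s_5},\tilde b\Bigr],\quad f_8[0,\tilde b]=\Bigl[\tfrac{a}{s_8},\tfrac{a}{s_8}+\tfrac{\tilde b}{3s_8}\Bigr].
\]
The union $f_4[0,\tilde b]\cup f_7[0,\tilde b]$ then lies in $\bigl(-\infty,\tfrac{a}{\min\{s_4,s_7\}}\bigr]$, while $f_5[0,\tilde b]\cup f_8[0,\tilde b]$ lies in $\bigl[\tfrac{a}{\max\{s_5,s_8\}},\infty\bigr)$, so the former sits entirely to the left of the latter precisely when $\tfrac{a}{\min\{s_4,s_7\}}\le\tfrac{a}{\max\{s_5,s_8\}}$, which (as $a>0$) is exactly the hypothesis $\max\{s_5,s_8\}\le\min\{s_4,s_7\}$.

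I expect the main obstacle to be the bookkeeping in the invariance step rather than any deep idea: one must verify the correct inequalities at both endpoints for the orientation-reversing maps $f_4,f_7$ and the upward map $f_8$, and confirm that these reduce to the standing assumptions $s_i\in(\tfrac23,1)$ and $s_5\le s_8$. I would also be careful about the boundary case of the second assertion: the comparison above yields an empty intersection as soon as the inequality $\max\{s_5,s_8\}\le\min\{s_4,s_7\}$ is strict, whereas equality would leave the two unions meeting at the single point $\tfrac{a}{\min\{s_4,s_7\}}$; this degenerate contact is harmless for the subsequent overlapping-number count, and in any case disjointness of the interiors always holds.
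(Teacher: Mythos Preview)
Your proof is correct and follows essentially the same route as the paper's: verify forward invariance of $[0,\text{Fix}(f_5)]$ map by map (with the nontrivial checks being the orientation-reversing $f_4,f_7$ and the upward $f_8$), then read off the four images and compare endpoints to obtain the separation. If anything, you are more careful than the paper---you justify why $[0,\tilde b]$ is \emph{the} invariant interval via the convex hull of the attractor, and you flag the boundary case $\max\{s_5,s_8\}=\min\{s_4,s_7\}$, which the paper silently passes over.
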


    For a visualisation, see Figure~\ref{fig:xaxis}.
    
	\begin{proof} The maps $f_4$ and $f_{7}$ are flipping the orientation and other maps are orientation preserving maps. Since $s_5\leq s_8$, the fixed point $\text{Fix}(f_5)$ is the largest fixed points. Since $s_i\in \bigg(\frac{2}{3},1\bigg)~~\forall~~i\in \{1,2,\dots,9\}$, we have 
$$f_{4}[0,\text{Fix}(f_5)]= \bigg[\frac{a(3s_5-2)}{s_4(3s_5-1)}, \frac{a}{s_4}\bigg]\subset [0, \text{Fix}(f_5)],$$ $$f_{7}[0,\text{Fix}(f_5)]= \bigg[\frac{a(3s_5-2)}{s_7(3s_5-1)}, \frac{a}{s_7}\bigg]\subset [0, \text{Fix}(f_5)].$$ This implies that $[0,\tilde{b}]$ is the invariant interval for the IFS $\mathcal{J}_{X}$, where $\tilde{b}=\text{Fix}(f_5).$ One can see that 
$$f_{5}[0,\tilde{b}]=\bigg[\frac{a}{s_5}, \tilde{b}\bigg]~\text{and}~f_{8}[0,\tilde{b}]=\bigg[\frac{a}{s_8}, \frac{3s_5 a}{s_8 (3s_{5}-1)}\bigg].$$
Now, we assume that $\max\{s_{5},s_{8}\}\leq \min \{s_{4},s_{7}\}$. Then, we get 
$$(f_4[0,\Tilde{b}]\cup f_7[0,\Tilde{b}])\cap  (f_5[0,\Tilde{b}]\cup f_8[0,\Tilde{b}])=\emptyset.$$
This completes the proof.
	\end{proof}
    \begin{figure}
    							\begin{center}
							\begin{tikzpicture}
							\draw[thick, red](0,0)--(4,0)--(6,0);
							\node[above] at (0,0) {0};
								\node[right] at (6,0) {$\text{Fix}(f_5)$};
									\draw[thick, green](4,0.2)--(6,0.2);
										\node[above] at (5,0.2) {$f_5$};
								\draw[thick, blue](3.2,-0.2)--(5,-0.2);
							\node[below] at (4,-0.3) {$f_8$};	
							\draw[thick, black](1,0.2)--(3,0.2);
						\node[above] at (2.0,0.2) {$f_4$};
						\draw[thick,gray ](0.5,-0.2)--(2.5,-0.2);
						\node[below] at (1.5,-0.3) {$f_7$};				
						\draw[thick, red](0,0.1)--(0,-0.1);
							\draw[thick, red](6,0.1)--(6,-0.1);
							\draw[thick, green](6,0.1)--(6,0.3);
							\draw[thick, green](4,0.1)--(4,0.3);
								\draw[thick, blue](3.2,-0.1)--(3.2,-0.3);
									\draw[thick, blue](5,-0.1)--(5,-0.3);
									\draw[thick, black](1,0.1)--(1,0.3);
									\draw[thick, black](3,0.1)--(3,0.3);
									\draw[thick,gray ](0.5,-0.1)--(0.5,-0.3);
									\draw[thick,gray ](2.5,-0.1)--(2.5,-0.3);
							
							\end{tikzpicture}  
                            \caption{Visualisation of the configuration in \eqref{eq3.1}.}\label{fig:xaxis}
						\end{center}
                        \end{figure}
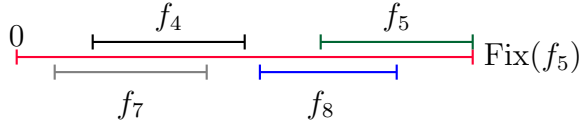
	Now, we will see the projection of the Furstenberg IFS $\mathcal{J}$ on the $Y$-axis. Let $g_i$ be the projection of $h_i$ on the $Y$-axis. Let $\mathcal{J}_{Y}=\{g_1,g_2,\dots,g_9\}$ be the projection of the IFS $\mathcal{J}$ on the $y$-axis. Precisely, the maps $g_i's$ are as follows:
	$$g_1(y)=\frac{y}{3s_1},\quad g_3(y)=\frac{y}{3s_3}, \quad g_6(y)=\frac{y}{3s_6}, $$
	$$g_2(y)=\frac{y}{3s_2}-\frac{2a}{\sqrt{3}s_2}, \quad g_4(y)=\frac{y}{3s_4}+\frac{a}{\sqrt{3}s_4}, \quad g_5(y)=\frac{y}{3s_5}+\frac{a}{\sqrt{3}s_5},$$
	$$g_7(y)=\frac{-y}{3s_7}+\frac{a}{\sqrt{3}s_7},\quad g_8(y)=\frac{-y}{3s_8}+\frac{a}{\sqrt{3}s_8},\quad g_9(y)=\frac{-y}{3s_9}-\frac{2a}{\sqrt{3}s_9}.$$
	Next, we will examine the invariant interval for the IFS $\mathcal{J}_{Y}=\{g_1,g_2,\dots,g_9\}$. The fixed points of the maps $g_i's$ are as follows: 
	$$\text{Fix}(g_1)=\text{Fix}(g_3)=\text{Fix}(g_6)=0, $$
	$$\quad \text{Fix}(g_2)=\frac{-6a}{\sqrt{3}(3s_2-1)}, \quad \text{Fix}(g_4)=\frac{3a}{\sqrt{3}(3s_4-1)}, \quad \text{Fix}(g_5)=\frac{3a}{\sqrt{3}(3s_5-1)},$$
	$$\text{Fix}(g_7)=\frac{3a}{\sqrt{3}(3s_7+1)}, \quad \text{Fix}(g_8)=\frac{3a}{\sqrt{3}(3s_8+1)}, \quad \text{Fix}(g_9)=\frac{-6a}{\sqrt{3}(3s_9+1)}.$$
	Without loss of generality, we assume that $s_7\leq s_8$ and $s_4\leq s_5.$
	\begin{lemma}\label{Yaxis}
		Let $s_i\in (\frac{2}{3},1)\quad \forall\quad i\in \{1,2,\dots,9\}$. We assume that if $\max\{s_{5},s_{8}\}\leq \min \{s_{4},s_{7}\}$ and $s_2\leq s_9.$  Let $\Tilde{I}$ be the invariant interval for the IFS $\mathcal{J}_{Y}$. Then,  the invariant interval $\Tilde{I}$ is either $[\text{Fix}(g_2), \text{Fix}(g_5)]$ or $[\text{Fix}(g_2), g_8(\text{Fix}(g_2))]$.
	\end{lemma}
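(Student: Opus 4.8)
The plan is to realize the invariant interval $\tilde I=[c,d]$ as the convex hull of the attractor $K_Y$ of $\mathcal{J}_Y$, i.e. the smallest closed interval with $g_i(\tilde I)\subseteq\tilde I$ for all $i$, characterized by $c=\min_i\min g_i([c,d])$ and $d=\max_i\max g_i([c,d])$. Throughout I take $a>0$ (the case $a<0$ is symmetric) and record that $s_i>\tfrac23$ forces every ratio $\lambda_i=\tfrac{1}{3s_i}<\tfrac12$. Sorting the maps by the sign of their fixed point, only $g_2,g_9$ have negative fixed points, $g_1,g_3,g_6$ fix $0$, and $g_4,g_5,g_7,g_8$ have positive fixed points. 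Since $c<0<d$, the maps fixing $0$ send $[c,d]$ into $[\lambda_ic,\lambda_id]\subset[c,d]$ and never produce an endpoint, so only six maps matter.

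First I would pin down the left endpoint and claim $c=\text{Fix}(g_2)$. Because $s_2\le s_9$ one checks $\text{Fix}(g_2)\le\text{Fix}(g_9)<0$, so $g_2$ is the downward-dominant map; the only candidates for the leftmost image of $[c,d]$ are the left endpoints $g_2(c)$ (orientation preserving) and $g_9(d)$ (orientation reversing). As $g_2(\text{Fix}(g_2))=\text{Fix}(g_2)$, the single inequality that makes $c=\text{Fix}(g_2)$ consistent is $g_9(d)\ge\text{Fix}(g_2)$, which I return to below. For the right endpoint I would use $\max\{s_5,s_8\}\le\min\{s_4,s_7\}$, i.e. $s_5\le s_4$ and $s_8\le s_7$. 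For $d>0$ one has $g_5(d)=\frac{1}{s_5}\big(\frac d3+\frac{a}{\sqrt3}\big)\ge g_4(d)$, so $g_5$ dominates $g_4$ among the orientation‑preserving maps, and for $c<0$ one has $g_8(c)=\frac{1}{s_8}\big(-\frac c3+\frac{a}{\sqrt3}\big)\ge g_7(c)$, so $g_8$ dominates $g_7$ among the orientation‑reversing maps. Hence $d=\max\{g_5(d),g_8(c)\}$.

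The system now decouples: $c=\text{Fix}(g_2)$ is the fixed point of $g_2$ and is independent of $d$, so $g_8(c)=g_8(\text{Fix}(g_2))$ is a determined number, while $g_5(d)\le d$ holds exactly when $d\ge\text{Fix}(g_5)$. Solving the one‑sided relation $d=\max\{g_5(d),g_8(c)\}$ therefore yields $d=\max\{\text{Fix}(g_5),\,g_8(\text{Fix}(g_2))\}$: if $g_8(\text{Fix}(g_2))\le\text{Fix}(g_5)$ then $d=\text{Fix}(g_5)$ and $\tilde I=[\text{Fix}(g_2),\text{Fix}(g_5)]$, whereas if $g_8(\text{Fix}(g_2))>\text{Fix}(g_5)$ then $d=g_8(\text{Fix}(g_2))$ and $\tilde I=[\text{Fix}(g_2),g_8(\text{Fix}(g_2))]$, which is precisely the claimed dichotomy. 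Minimality of this interval is immediate, since $\text{Fix}(g_2),\text{Fix}(g_5)$ and $g_8(\text{Fix}(g_2))$ all lie in $K_Y$, so $\mathrm{conv}(K_Y)\supseteq[c,d]$.

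It then remains to verify that $[c,d]$ with these endpoints is genuinely invariant, which gives the reverse inclusion. The orientation‑preserving maps and the zero‑fixed maps send $[c,d]$ inside by the standard contraction‑toward‑fixed‑point estimate, and the right endpoints of the orientation‑reversing maps obey $g_7(c)\le g_8(c)\le d$; the only delicate checks are the left endpoints $g_9(d),g_8(d),g_7(d)\ge\text{Fix}(g_2)$. I expect the main obstacle here, because the binding constraint is $g_9(d)\ge\text{Fix}(g_2)$ in the worst case $d=g_8(\text{Fix}(g_2))$ of Case II, where raising $d$ makes $g_9(d)$ most negative — this is exactly the coupling that one must rule out to keep $c$ at $\text{Fix}(g_2)$. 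Substituting $d=\frac{a}{\sqrt3 s_8}\cdot\frac{3s_2+1}{3s_2-1}$ and $\text{Fix}(g_2)=-\frac{6a}{\sqrt3(3s_2-1)}$, this reduces to the elementary inequality $\frac{3s_2+1}{3s_8}+6s_2-2\le 6s_9$, which follows from $3s_8>2$, $s_2<1$ and $s_2\le s_9$ via $\frac{3s_2+1}{3s_8}+6s_2-2<\frac{15s_2-3}{2}\le 6s_2\le 6s_9$. The hypothesis $s_i>\tfrac23$ is exactly what drives this (for instance it already forces $g_9(\text{Fix}(g_2))<0$), and the looser inequalities for $g_7(d),g_8(d)$, whose maps carry positive intercepts, are handled the same way.
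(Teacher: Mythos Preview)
Your argument is correct and follows essentially the same route as the paper: identify $\text{Fix}(g_2)$ as the left endpoint, recognize that among the maps with positive fixed point $g_5$ and $g_8$ are dominant (the paper uses $s_5\le s_4$, $s_8\le s_7$ in exactly the same way), and then verify that the orientation-reversing map $g_9$ cannot push the left endpoint below $\text{Fix}(g_2)$, which is the same decisive inequality the paper checks. Your organization via the single equation $d=\max\{g_5(d),g_8(c)\}$ and the clean chain $\frac{3s_2+1}{3s_8}+6s_2-2<\frac{15s_2-3}{2}\le 6s_2\le 6s_9$ is tidier than the paper's case-by-case computations, but the underlying idea is identical.
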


    For a visualisation, see Figure~\ref{fig:yaxis}.
    
	\begin{proof}
		The maps $g_2,g_4$ and $g_5$ preserve the orientation, however the maps $g_7,g_8$ and $g_9$ are flipping the orientation  about $Y$- axis. The $\text{Fix}(g_2)$ is the lowest fixed point. Since $s_8\leq s_7$ and $s_5\leq s_4,$ the fixed point $\text{Fix}(g_5)$ is the largest fixed point. All the fixed points are always in the invariant interval $\Tilde{I}.$ Thus, $g_8(\text{Fix}(g_2))\in \Tilde{I}.$ And, we have 
		$$g_8(\text{Fix}(g_2))=\frac{-1}{3s_8}\bigg(\frac{-6a}{\sqrt{3}(3s_2-1)}\bigg)+\frac{a}{\sqrt{3} s_8}= \frac{a}{\sqrt{3}s_8}\bigg(\frac{3s_2+1}{3s_2-1}\bigg),$$
		$$g_7(\text{Fix}(g_2))= g_8(\text{Fix}(g_2))\bigg(\frac{s_8}{s_7}\bigg)<g_8(\text{Fix}(g_2)).$$
		Furthermore, $g_8(g_8(\text{Fix}(g_2)))\in \Tilde{I}$ and $g_8(\text{Fix}(g_5))\in \Tilde{I}.$ Thus, we have 
		$$g_8(g_8(\text{Fix}(g_2)))=\frac{a}{\sqrt{3}s_8}\bigg(\frac{3s_8(3s_2-1)-(3s_2+1)}{3s_8(3s_2-1)}\bigg), g_8(\text{Fix}(g_5))=\frac{a}{\sqrt{3}s_8}\bigg(\frac{3s_5-2}{3s_5-1}\bigg).$$
		One can see that $$g_8(g_8(\text{Fix}(g_2)))>\text{Fix}(g_2)~~, 0<g_8(\text{Fix}(g_5))< g_8(\text{Fix}(g_2))~~ \text{and}~~0<g_8(\text{Fix}(g_5))<\text{Fix}(g_5).$$
		The map $g_9$ is also flipping the orientation. So,
		$$g_9(\text{Fix}(g_5))=\frac{-1}{3s_9}\bigg(\frac{3a}{\sqrt{3}(3s_5-1)}\bigg)-\frac{2a}{\sqrt{3} s_9}=\frac{-a}{\sqrt{3}s_9}\bigg(\frac{6s_5-1}{3s_5-1}\bigg),$$
		$$g_9(g_8(\text{Fix}(g_2)))=\frac{-a}{\sqrt{3}s_9}\bigg( \frac{(3s_2+1)+6s_8(3s_2-1)}{3s_8(3s_2-1)}\bigg),$$
		$$g_9(\text{Fix}(g_2))=\frac{-1}{3s_9}\bigg(\frac{-6a}{\sqrt{3}(3s_2-1)}\bigg)-\frac{2a}{\sqrt{3} s_9}=\frac{-2a}{\sqrt{3}s_9}\bigg(\frac{3s_2-2}{3s_2-1}\bigg).$$ 
		One can see that $$0>g_9(\text{Fix}(g_2))>\text{Fix}(g_2).$$
		For $s_2\leq s_9$, we have the following relation for the map $g_9$: 
		$$0>g_9(\text{Fix}(g_5))>\text{Fix}(g_2)\quad \text{and}\quad 0>g_9(g_8(\text{Fix}(g_2)))>\text{Fix}(g_2).$$ 
		Thus, for $s_2\leq s_9$, the invariant interval $\Tilde{I}$ is either $[\text{Fix}(g_2), \text{Fix}(g_5)]$ or $[\text{Fix}(g_2), g_8(\text{Fix}(g_2))]$
		depending on the relation between $\text{Fix}(g_5)$ and $g_8(\text{Fix}(g_2)).$  This completes the proof.
	\end{proof}
    \begin{figure}
    			\begin{center}
				\begin{tikzpicture}
					\draw[thick, red] (0,1.5)--(0,0)--(0,-2.0);
					\node[left] at (0,0) {0};
						\draw[thick, red](0.1,0)--(0,0)--(-0.1,0);
							\draw[thick, red](0.1,-2.0)--(0,-2.0)--(-0.1,-2.0);
								\draw[thick, red](0.1,1.5)--(0,1.5)--(-0.1,1.5);
						\node[left] at (0,-2.0) {$\text{Fix}(g_2)$};
							\node[left] at (0,1.5) {$\text{Fix}(g_5)$};
				\draw[thick, green] (0.5,-0.8)--(0.5,-2.0);	
					\node[right] at (0.5,-1.5) {$g_2$};			
						\draw[thick, green](0.4,-2.0)--(0.5,-2.0)--(0.6,-2.0);
						\draw[thick, green](0.4,-0.8)--(0.5,-0.8)--(0.6,-0.8);
							\draw[thick, blue] (0.5,1.5)--(0.5,-0.6);	
								\node[left] at (0.5,0.5) {$g_5$};	
									\draw[thick, blue](0.4,1.5)--(0.5,1.5)--(0.6,1.5);
										\draw[thick, blue](0.4,-0.6)--(0.5,-0.6)--(0.6,-0.6);
						\draw[thick, gray] (0.7,1.3)--(0.7,-0.5);	
					\node[right] at (0.7,0.5) {$g_4$};		
						\draw[thick, gray](0.6,1.3)--(0.7,1.3)--(0.8,1.3);
					\draw[thick, gray](0.6,-0.5)--(0.7,-0.5)--(0.8,-0.5);				
					\draw[thick, red] (4,1.5)--(4,0)--(4,-2.0);
					\node[left] at (4,0) {0};
					\draw[thick, red](3.9,0)--(4,0)--(4.1,0);
						\draw[thick, red](3.9,1.5)--(4,1.5)--(4.1,1.5);
							\draw[thick, red](3.9,-2.0)--(4,-2.0)--(4.1,-2.0);
					\node[left] at (4,-2.0) {$\text{Fix}(g_2)$};
					\node[left] at (4,1.5) {$g_8(\text{Fix}(g_2))$};
						\draw[thick, green] (4.5,-0.8)--(4.5,-2.0);	
					\node[right] at (4.5,-1.5) {$g_2$};			
					\draw[thick, green](4.4,-2.0)--(4.5,-2.0)--(4.6,-2.0);
					\draw[thick, green](4.4,-0.8)--(4.5,-0.8)--(4.6,-0.8);
						\draw[thick, blue] (4.5,1.5)--(4.5,-0.6);	
					\node[left] at (4.5,0.5) {$g_8$};	
					\draw[thick, blue](4.4,1.5)--(4.5,1.5)--(4.6,1.5);
					\draw[thick, blue](4.4,-0.6)--(4.5,-0.6)--(4.6,-0.6);
					\draw[thick, gray] (4.7,1.3)--(4.7,-0.5);	
					\node[right] at (4.7,0.5) {$g_7$};		
					\draw[thick, gray](4.6,1.3)--(4.7,1.3)--(4.8,1.3);
					\draw[thick, gray](4.6,-0.5)--(4.7,-0.5)--(4.8,-0.5);	
				\end{tikzpicture}  
                \caption{Visualisation of the configuration in Lemma~\ref{Yaxis}.}\label{fig:yaxis}
			\end{center}
            \end{figure}
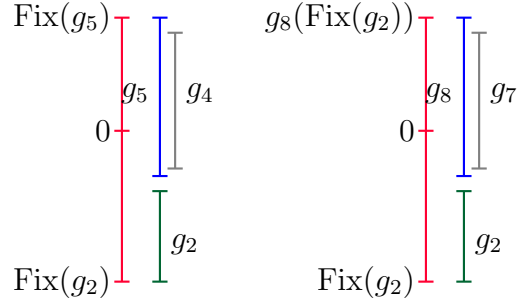

	
	\begin{proposition} \label{localdim}
		Let	$s_i\in \bigg(\frac{2}{3},1\bigg)~\forall~i\in \{1,2,\dots,9\}$.  Suppose that $\max\{s_{5},s_{8}\}\leq \min \{s_{4},s_{7}\}$ and $s_2\leq s_9.$ For $x\in A_{F},$ let $\underline{d}_{\mu_F}(x)$ denote the lower local dimension of $\mu_F$ at $x.$ Then, 
		$$\dim_H\mu_F\geq \frac{\log(1-\frac{s_{\min}}{3^{t_0-2}})}{-\log(3s_{\max})}.$$
	\end{proposition}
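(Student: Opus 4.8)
The measure $\mu_F$ is the self-similar measure of the planar IFS $\mathcal{J}=\{h_1,\dots,h_9\}$ (with attractor $A_F$) for the probability vector $p_i=s_i(1/3)^{t_0-1}$, and each $h_i$ is a similarity of ratio $\tfrac{1}{3s_i}\in(\tfrac13,\tfrac12)$. The plan is to apply Proposition~\ref{prop:covering}, which bounds the lower local dimension $\underline{d}_{\mu_F}(x)$ from below uniformly in $x\in A_F$, with governing scale the smallest ratio $\tfrac{1}{3s_{\max}}=\min_i\tfrac{1}{3s_i}$ and with $p_{\min}=s_{\min}3^{1-t_0}$. Hence, as soon as the overlapping number
\[
Q:=\min_{x\in A_F}\#\{i\in\{1,\dots,9\}:\ x\notin h_i(A_F)\}
\]
is shown to satisfy $Q\ge 3$, substituting $Q=3$ together with $3p_{\min}=s_{\min}3^{2-t_0}=s_{\min}/3^{t_0-2}$ yields
\[
\dim_H\mu_F\ \ge\ \frac{\log\!\left(1-3p_{\min}\right)}{-\log(3s_{\max})}\ =\ \frac{\log\!\left(1-\dfrac{s_{\min}}{3^{t_0-2}}\right)}{-\log(3s_{\max})},
\]
which is exactly the asserted inequality. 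Thus the whole statement is reduced to proving $Q\ge 3$.

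To estimate $Q$ I would use the two coordinate projections. Let $[0,\tilde b]$ be the invariant interval of $\mathcal{J}_X$ from Lemma~\ref{Xaxis} and $\tilde I$ that of $\mathcal{J}_Y$ from Lemma~\ref{Yaxis}; then $A_F\subseteq[0,\tilde b]\times\tilde I$, and $x=(x_1,x_2)\in h_i(A_F)$ forces $x_1\in f_i([0,\tilde b])$ and $x_2\in g_i(\tilde I)$. By the disjointness~\eqref{eq3.1} of Lemma~\ref{Xaxis}, the sets $f_4([0,\tilde b])\cup f_7([0,\tilde b])$ and $f_5([0,\tilde b])\cup f_8([0,\tilde b])$ do not meet, so $x_1$ lies in at most one of them. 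Consequently at least one of the pairs $\{4,7\}$, $\{5,8\}$ is missed entirely on the $X$-axis, producing at least two indices $i$ with $x_1\notin f_i([0,\tilde b])$, hence $x\notin h_i(A_F)$.

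The remaining, and decisive, step is to extract from the $Y$-projection a third exclusion distinct from the two above. Here I would establish, by the same type of computation as in Lemma~\ref{Yaxis} (comparing the fixed points $\text{Fix}(g_i)$ with the images $g_i(\tilde I)$), a disjointness of $Y$-projections separating a ``high'' group of indices from a ``low'' group; a point with $x_2\in\tilde I$ then lies in at most one group, so at least one index of the other group is missed. A short case analysis over the possible $X$-pairs $\{4,7\},\{5,8\}$ and the $Y$-groups shows the $Y$-exclusion can always be taken distinct from the two $X$-exclusions (for instance, if $X$ excludes $\{5,8\}$ and $Y$ excludes $\{4,5\}$ one obtains the three distinct indices $\{4,5,8\}$), whence $Q\ge 3$.

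The genuine difficulty, and the main obstacle, is precisely this $Y$-axis bookkeeping: pinning down the correct high/low groups and proving their separation under the hypotheses $\max\{s_5,s_8\}\le\min\{s_4,s_7\}$ and $s_2\le s_9$ (the latter governs the lower end of $\tilde I$ through the map $g_9$, exactly as in Lemma~\ref{Yaxis}), and then confirming in each of the finitely many configurations of $(x_1,x_2)$ that the three missed indices are distinct. Granting $Q\ge 3$, the reduction via Proposition~\ref{prop:covering} and the substitution displayed above complete the proof.
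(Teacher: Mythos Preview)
Your approach is exactly the paper's: reduce to Proposition~\ref{prop:covering} with $Q\ge 3$, obtain two exclusions from the $X$-projection via~\eqref{eq3.1}, and a third from the $Y$-projection. The $Y$-axis step you flag as the obstacle is handled in the paper more simply than a general high/low dichotomy: in the case $\tilde I=[\mathrm{Fix}(g_2),\mathrm{Fix}(g_5)]$ the image $h_2([0,\tilde b]\times\tilde I)$ lies at the bottom of the invariant rectangle and $h_5$ at the top, and $\tfrac{1}{3s_2}+\tfrac{1}{3s_5}<1$ makes them disjoint; then $s_5\le s_4$ forces $h_2\cap h_4=\emptyset$ as well. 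So if $x\in h_2(A_F)$ you pick up both $4$ and $5$ as extra exclusions (yielding $\{4,5,8\}$ or $\{4,5,7\}$ depending on which pair the $X$-axis already excluded), while if $x\notin h_2(A_F)$ the index $2$ itself is the third exclusion. The alternative case $\tilde I=[\mathrm{Fix}(g_2),g_8(\mathrm{Fix}(g_2))]$ is identical with $h_7,h_8$ in place of $h_4,h_5$ and the inequality $s_8\le s_7$ in place of $s_5\le s_4$. With $Q\ge 3$ established, your substitution $3p_{\min}=s_{\min}/3^{t_0-2}$ and $\lambda_{\max}=1/(3s_{\max})$ into Proposition~\ref{prop:covering} is correct and finishes the proof.
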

	\begin{proof}
    For each $x\in A_{F}$, we define $C_{x}:=\{i: x\notin h_i(A_{F}) ~~\text{and}~~i\in \{1,2,\dots,9\}\}.$ First, we show that $\#C_{x}\geq 3$ for every $x\in A_{F}$, where $\#C_{x}$ denotes the cardinality of $C_{x}.$
    
By Lemma \ref{Xaxis} and Lemma \ref{Yaxis}, either the rectangle $[0,\Tilde{b}]\times [\text{Fix}(g_2), \text{Fix}(g_5)]$ or $[0,\Tilde{b}]\times [\text{Fix}(g_2), g_8(\text{Fix}(g_2))]$ mapped into itself by all the maps of the Furstenberg IFS $\mathcal{J}$.\par
		In the first situation, $\Tilde{I}=[\text{Fix}(g_2), \text{Fix}(g_5)]$. In this case, the cylinder corresponding to $h_2$ is placed at the bottom of the invariant set (on the $Y$-axis) and the cylinder corresponding to $h_5$ is placed at the top of the invariant set (on the left side of the $ Y$-axis). Since $\frac{1}{3s_2}+\frac{1}{3s_5}<1$ and $s_5\leq s_4$, we have  $$h_2([0,\Tilde{b}]\times \Tilde{I})\cap h_5([0,\Tilde{b}]\times \Tilde{I})=\emptyset ~\text{and}~h_2([0,\Tilde{b}]\times \Tilde{I})\cap h_4([0,\Tilde{b}]\times \Tilde{I})=\emptyset.$$ By Equation \ref{eq3.1}, we have  
		$$\bigg(h_4([0,\Tilde{b}]\times \Tilde{I})\cup h_7([0,\Tilde{b}]\times \Tilde{I})\bigg)\cap \bigg(h_5([0,\Tilde{b}]\times \Tilde{I})\cup h_8([0,\Tilde{b}]\times \Tilde{I})\bigg)=\emptyset.$$
		Thus, from the above, it is clear that a point can be contained in at most six cylinders, and so $\# C_x\geq3$. 
        
        In another situation $\Tilde{I}=[\text{Fix}(g_2), g_8(\text{Fix}(g_2))]$, then by using same idea as above, and using the conditions  $\frac{1}{3s_2}+\frac{1}{3s_8}<1$ and $s_8\leq s_7$, one can get $\# C_x\geq 3$. The claim then follows from Proposition~\ref{prop:covering}.
	\end{proof}
	
	\begin{proof}[Proof of Proposition~\ref{Dimfurstenberg}]
		Since $\dim_{H}(\mu_F)= \text{ess inf} (\underline{d}_{\mu_F}(x))$ and by Proposition \ref{localdim}, we obtain
		$$\dim_{H}(\mu_F)\geq \frac{\log(1-\frac{s_{\min}}{3^{t_0-2}})}{-\log(3s_{\max})}.$$
		For proving $\dim_{H}(\mu_F)>3-t_0$, it is enough to show that $\frac{\log(1-\frac{s_{\min}}{3^{t_0-2}})}{-\log(3s_{\max})}>3-t_0.$ We have 
		\begin{align*}
			&\frac{\log(1-\frac{s_{\min}}{3^{t_0-2}})}{-\log(3s_{\max})}>3-t_0\\ \Leftarrow& \frac{\log(1-\frac{2}{3^{t_0-1}})}{-\log(3)}>3-t_0\\ \Leftarrow&
			\bigg(1-\frac{6}{3^{t_0}}\bigg)< \frac{3^{t_0}}{27}\\ \Leftarrow&0< 3^{2t_0}-27\times  3^{t_0} +162.
		\end{align*}
		This implies that if $3^{t_0}> 18 $, then the inequality  $\frac{\log(1-\frac{s_{\min}}{3^{t_0-2}})}{-\log(3s_{\max})}>3-t_0$  holds. Since $s_i\in \bigg(\frac{2}{3},1\bigg)$ and the affinity dimension $t_0=1+\frac{\log(\sum_{i=1}^{9}s_i)}{\log(3)}>1+\frac{\log 6}{\log 3}$, and so, we have $3^{t_0}>18.$ This completes the proof.
	\end{proof}

\begin{remark}
    The method of overlapping numbers might again be applicable for other cases when $N\geq4$ and for general data sets. However, since there are many maps in very general positions, verifying that the overlapping number is sufficiently small would clearly require tedious calculations.
\end{remark}

    \subsection{Uniform scaling factors}
    Next, we will consider a uniform scaling factor $s=s_i~~\forall~~i\in \{1,2,\dots,9\}$ in the construction of fractal surfaces.  For that, we have the following result: 
	\begin{corollary}
		If $s \in (\frac{2}{3},1)$, then $$\dim_{H}(G(f^*))=\dim_{B}(G(f^*))=3+\frac{\log(s)}{\log(3)}.$$
	\end{corollary}
	The proof of this corollary follows from Theorem \ref{main2}.\\
	Next, we show some dimension results for the typical choice of the uniform scaling factor.
	\begin{theorem}\label{TypicalTheorem}
		If $s=s_i~~\forall~~i\in \{1,2,\dots,9\}$, then $$\dim_{H}(G(f^*))=\dim_{B}(G(f^*))=3+\frac{\log(s)}{\log(3)} ~~\text{for a.e.}~~s\in (1/3,1).$$
	\end{theorem}
	\begin{proof} 	If $s=s_i$ for every $i\in \{1,2,\dots,9\}$, then the projected IFS $\mathcal{J}_{X}=\{f_1,f_2,\dots,f_9\}$ of the Furstenberg IFS is as follows. 
		$$f_1(x)=f_2(x)=f_3(x)=f_6(x)=f_9(x)=\frac{x}{3s},$$
		$$f_4(x)=f_7(x)=-\frac{x}{3s}+\frac{a}{s}, \quad f_5(x)=f_8(x)=\frac{x}{3s}+\frac{a}{s}.$$	
		And in this case $p_i=\frac{1}{9}$ for every $i\in \{1,2,\dots,9\}.$ Thus, the IFS $\mathcal{J}_{X}$ is equivalent to the IFS $\tilde{\mathcal{J}_{X}}=\{\tilde{f}_1(x)=\frac{x}{3s}, \tilde{f}_2(x)=-\frac{x}{3s}+\frac{a}{s},  \tilde{f}_3(x)=\frac{x}{3s}+\frac{a}{s}\}$ with probability vector $(\frac{5}{9}, \frac{2}{9},\frac{2}{9}).$ Let ${P_{X*}}\mu_{F}$ be the projection of the measure $\mu_{F}$ on the $X$-axis. Let $\epsilon>0$ and $M$ be a sufficiently large natural number. Now, we define real analytic maps $r_i : [\frac{1}{3}+\epsilon, M]\to (-1,1)\setminus \{0\} \}$ and $d_i : [\frac{1}{3}+\epsilon, M]\to \mathbb{R} \}$ are as follows:
		$$r_1(s)=r_{3}(s)=\frac{1}{3s},~r_{2}(s)=-\frac{1}{3s},~d_1(s)=0,~ d_2(s)=d_3(s)=\frac{a}{s}.$$ The IFS $\tilde{\mathcal{J}_{X}}$ is same as a parametric family of self-similar IFS $\mathcal{I}_{s}=\{r_i(s)x+d_i(s)\}_{i=1}^{3}.$ For $s\in (1,M]$, the parametric IFS $\mathcal{I}_{s}$ satisfies the strong separation condition. Let $A_{s}$ be the attractor of the parametric IFS $\mathcal{I}_{s}$. Let $\Pi_{s}: \Sigma=\{1,2,3\}^{\mathbb{N}}\to A_s$  be the associated natural projection. Then,
		$$\forall~\bold{i},\bold{j}\in \Sigma, \quad \Pi_{s}(\bold{i})=\Pi_{s}(\bold{j})\quad \text{on} \quad \bigg[\frac{1}{3}+\epsilon, M\bigg]\iff \bold{i}=\bold{j}.$$
		Thus by the result of Hochman \cite[Theorem 1.10]{Hochman2014}, we get 
		$$\dim_{H}({P_{X*}}\mu_{F})=\min\bigg\{1,\frac{-\sum_{i=1}^{3}p_i\log(p_i)}{-\sum_{i=1}^{3}p_i\log(|r_i|)}=\frac{\frac{-5}{9}\log(\frac{5}{9})-\frac{4}{9}\log(\frac{2}{9})}{\log(3s)}\bigg\}~\text{for}~a.e. ~s\in \bigg[\frac{1}{3}+\epsilon, M\bigg].$$ The $\epsilon$ is arbitrarily small. Thus, we have 
		$$\dim_{H}(\mu_{F})\geq \dim_{H}({P_{X*}}\mu_{F})=\min\bigg\{1,\frac{\frac{-5}{9}\log(\frac{5}{9})-\frac{4}{9}\log(\frac{2}{9})}{\log(3s)}\bigg\}~\text{for}~a.e. ~s\in \bigg(\frac{1}{3}, 1\bigg).$$
		One can easily see that  $\frac{\frac{-5}{9}\log(\frac{5}{9})-\frac{4}{9}\log(\frac{2}{9})}{\log(3s)}>3-t=\frac{\log(\frac{1}{s})}{\log(3)}$ and $3-t\in (0,1)$ for all $s\in ( \frac{1}{3}, 1).$ Thus,
		$$\dim_{H}(\mu_{F})>3-t~\text{for}~a.e. ~s\in \bigg(\frac{1}{3}, 1\bigg).$$ Thus, by Theorem~\ref{thm:rapaport}, $\dim_{H}(\mu)=t=3+\frac{\log(s)}{\log(3)} ~~\text{for a.e.}~~s\in (1/3,1).$ This completes the proof.
	\end{proof}

    \section{Geronimo-Hardin surfaces}

In this section, we will prove Theorem \ref{GH1} and Theorem \ref{GH2}. 
	 In these results, the self-affine IFS is $\mathcal{I}=\{W_1,W_2,W_3,W_4\}$, where the maps $W_i$ are as follows:
	\begin{equation*}
		W_1(x,y,z)=\begin{bmatrix}  \frac{1}{4} &\frac{\sqrt 3}{4} & 0 \\ \frac{\sqrt 3}{4} & \frac{-1}{4} & 0\\ a & \frac{a}{\sqrt 3} & s \end{bmatrix} \begin{pmatrix} x \\ y \\ z,  \end{pmatrix},\quad  W_2(x,y,z)=\begin{bmatrix}  \frac{1}{4} & \frac{-\sqrt 3}{4} & 0 \\ \frac{-\sqrt 3}{4} & \frac{-1}{4} & 0\\ -a & \frac{a}{\sqrt{3}} & s \end{bmatrix} \begin{pmatrix} x \\ y \\ z  \end{pmatrix}+\begin{pmatrix} \frac{3}{4}\\ \frac{\sqrt 3}{4} \\ a  \end{pmatrix},
	\end{equation*}
	\begin{equation*}
		W_3(x,y,z)=\begin{bmatrix}  \frac{-1}{2} & 0 & 0 \\ 0 & \frac{1}{2} & 0\\ 0 & \frac{-2a}{\sqrt{3}}& s \end{bmatrix} \begin{pmatrix} x \\ y \\ z  \end{pmatrix}+\begin{pmatrix} \frac{3}{4}\\ \frac{\sqrt 3}{4} \\ a  \end{pmatrix}, \quad W_4(x,y,z)=\begin{bmatrix}  \frac{-1}{2} & 0 & 0 \\ 0 & \frac{-1}{2} & 0\\ 0 & 0 & s \end{bmatrix} \begin{pmatrix} x \\ y \\ z  \end{pmatrix}+\begin{pmatrix} \frac{3}{4}\\ \frac{\sqrt 3}{4} \\ a  \end{pmatrix}.
	\end{equation*}
	Thus, the IFS $\mathcal{I}$ is a block triangular self-affine IFS. Let $s\in (\frac{1}{2},1).$ Then, the affinity dimension $(t)$ for the IFS $\mathcal{I}$ is uniquely given by the following equation $$\sum_{i=1}^{4}s\bigg(\frac{1}{2}\bigg)^{t-1}=1,$$ and by \cite{Falconer1988}, we get 
	\begin{equation}\label{eq:GHub}
    \dim_{H}(G(f^*))\leq \overline{\dim}_{B}(G(f^*)) \leq t=3+\frac{\log(s)}{\log(2)}.
    \end{equation}

	 Since bi-Lipschitz conjugation preserves the fractal dimension, we may assume without loss of generality that $a=1$.
    For the lower bound, again, we construct the corresponding Furstenberg IFS $\mathcal{J}=\{h_1,h_2,h_3,h_4\}$ by \eqref{eq:furst}, where $h_i: \mathbb{R}^2\to \mathbb{R}^2 $ are defined as follows:
\begin{equation}\label{eq:furstgh}
\begin{gathered}
		h_1(x,y)=\frac{1}{2s}\begin{bmatrix} \frac{1}{2} & \frac{\sqrt 3}{2} \\ \frac{\sqrt 3}{2} & \frac{-1}{2} \end{bmatrix}\begin{bmatrix} x \\ y \end{bmatrix}+\begin{bmatrix} 1 \\ \frac{1}{\sqrt 3}\end{bmatrix}, \quad h_2(x,y)=\frac{1}{2s}\begin{bmatrix} \frac{1}{2} & \frac{-\sqrt 3}{2} \\ \frac{-\sqrt 3}{2} & \frac{-1}{2} \end{bmatrix}\begin{bmatrix} x \\ y \end{bmatrix}+\begin{bmatrix} -1 \\ \frac{1}{\sqrt 3}\end{bmatrix},\\
		h_3(x,y)=\frac{1}{2s}\begin{bmatrix} -1 & 0 \\ 0 & 1 \end{bmatrix}\begin{bmatrix} x \\ y \end{bmatrix}+\begin{bmatrix} 0 \\ \frac{-2}{\sqrt 3}\end{bmatrix}, \quad h_4(x,y)=\frac{1}{2s}\begin{bmatrix} -1 & 0 \\ 0 & -1 \end{bmatrix}\begin{bmatrix} x \\ y \end{bmatrix}.
	\end{gathered}
\end{equation}
	For $s\in (\frac{1}{2},1),$ the Furstenberg IFS $\mathcal{J}$ is contractive. Let $A_{F}$ be the attractor of the Furstenberg IFS $\mathcal{J}$. The  Furstenberg measure $\mu_F$ is the invariant Borel probability measure for the IFS $\mathcal{J}$ with the uniform probability vector $p=(\frac14,\frac14,\frac14,\frac14).$  By taking bi-Lipschitz conjugate  of the IFS $\mathcal{J}$  with the map $g(x,y)=\frac{-a}{s}(x,y)$, the resultant IFS is denoted by $\mathcal{J}=\{h_1,h_2,h_3,h_4\}$, which is as follows.

    \subsection{Every-type result}
	We can not proceed with the argument as in the previous section, because the projection of the Furstenberg IFS $\mathcal{J}$ on the $X$-axis and $Y$-axis is not an IFS. In this case, we need to directly determine the invariant set for the IFS $\mathcal{J}$ and with the help of that, we estimate the overlapping number for the IFS $\mathcal{J}$. The fixed points of the maps $h_i$'s are as follows.
	$$\text{Fix}(h_1)=\bigg(\frac{4s}{4s-2}, \frac{4s}{\sqrt {3}(4s-2)}
	\bigg), \quad \text{Fix}(h_2)=\bigg(-\frac{4s}{4s-2}, \frac{4s}{\sqrt {3}(4s-2)}
	\bigg),$$
	$$\text{Fix}(h_3)=\bigg(0, -\frac{8s}{\sqrt {3}(4s-2)}
	\bigg), \quad \text{Fix}(h_4)=(0,0).$$
	\begin{proposition}\label{overNUM2}
		Let $s\in  \bigg[\frac{1+\sqrt{5}}{4},1\bigg).$ Then, $\min_{x\in K}\#\{i\in\{1,\ldots,N\}:\ x\notin h_i(A_F)\}\geq 1.$ 
	\end{proposition}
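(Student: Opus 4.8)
The plan is to turn the statement into a purely geometric claim about the four depth-one pieces and then reduce it to a finite, explicit planar computation. Since $h_i(A_F)\subseteq A_F$ for each $i$, the quantity $\min_{x\in A_F}\#\{i:\ x\notin h_i(A_F)\}$ is at least $1$ precisely when no point of $A_F$ lies in all four images at once, i.e. when $\bigcap_{i=1}^{4}h_i(A_F)=\emptyset$. So the proposition is exactly the assertion that the four cylinders of generation one have empty total intersection once $s\ge\frac{1+\sqrt{5}}{4}$. To prove this I would not argue with $A_F$ directly but produce an explicit convex invariant polygon $H\supseteq A_F$; then $h_i(A_F)\subseteq h_i(H)$, and it suffices to show $\bigcap_{i=1}^{4}h_i(H)=\emptyset$.

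The first step is to construct $H$, and here I would exploit symmetry. A short computation with the orthogonal parts in \eqref{eq:furstgh} shows that the rotation through $120^\circ$ about the origin cyclically permutes $h_1,h_2,h_3$ and commutes with $h_4$ (a central rotation by $180^\circ$); together with the reflection $(x,y)\mapsto(-x,y)$ this endows $\mathcal{J}$, and hence $A_F$, with the full dihedral symmetry $D_3$. The outer fixed points $\text{Fix}(h_1),\text{Fix}(h_2),\text{Fix}(h_3)$ lie on the circle of radius $\rho=\frac{4s}{\sqrt{3}(2s-1)}$ at angles $30^\circ,150^\circ,270^\circ$, while the points $h_4(\text{Fix}(h_j))$ give three ``spikes'' at the intermediate angles $90^\circ,210^\circ,330^\circ$ and radius $\lambda\rho$, where $\lambda=\frac{1}{2s}$. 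I would take $H$ to be the hexagon on these six vertices and verify $h_i(H)\subseteq H$ by checking images of the vertices and using convexity; by the $D_3$-symmetry only the images under $h_3$ and $h_4$ genuinely need to be inspected.

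With $H$ fixed, the second step is the finite geometric estimate. The central piece $h_4(H)$ is a $\lambda$-scaled copy of $H$ around the origin, whereas $h_1(H),h_2(H),h_3(H)$ cluster at the three outer vertices, so any common point of all four must lie in the small region near the origin where the three outer copies still overlap. The claim then reduces to showing that $h_4(H)$ has been pulled clear of that region once $s\ge\frac{1+\sqrt{5}}{4}$. I would write down the supporting lines of the $h_i(H)$ facing the center and compute the critical $s$ at which the last such pair becomes tangent; the self-similar chain of scalings makes this a quadratic condition of the form $\lambda^2+\lambda=1$, equivalently $2s=\frac{1+\sqrt{5}}{2}$, which is exactly $s=\frac{1+\sqrt{5}}{4}$. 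Should the circumscribing hexagon prove too coarse to separate the pieces at the sharp value, I would first sharpen $H$ by replacing it with $\bigcup_i h_i(H)$ (still an invariant superset of $A_F$) before running the tangency computation.

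The main obstacle is exactly the construction of $H$ together with the sharpness in the tangency step. The two naive candidates both fail: the enclosing disc $\overline{D(0,\rho)}$ is far too crude, since its four images overlap for every $s<1$ and it never separates, while the triangle on the three outer fixed points is not invariant for $s<1$, because the $180^\circ$-image of its bottom vertex pokes out past the opposite edge precisely when $\lambda>\tfrac12$. One is therefore forced to identify the genuine convex hull of $A_F$ and to carry the planar geometry out carefully enough that the tangency condition reproduces the golden-ratio constant $\frac{1+\sqrt{5}}{4}$ rather than a weaker sufficient bound; extracting that exact constant, instead of merely ``$s$ close to $1$'', is the delicate point.
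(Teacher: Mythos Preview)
Your proposal matches the paper's proof closely: the paper constructs exactly the hexagon you describe, with vertices $A=\text{Fix}(h_1)$, $B=\text{Fix}(h_2)$, $C=\text{Fix}(h_3)$ and $A'=h_4(A)$, $B'=h_4(B)$, $C'=h_4(C)$, verifies $h_i(\text{Con}(ABCA'B'C'))\subset\text{Con}(ABCA'B'C')$ by explicit vertex tracking (your appeal to the $D_3$ symmetry would shorten this step), and then bounds the overlap.

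One correction to your geometric picture of the second step: it is not $h_4(H)$ that is ``pulled clear.'' That piece is always a $\lambda$-scaled copy of $H$ about the origin and in particular always contains $0=\text{Fix}(h_4)$, so it cannot separate from a region containing the origin. What actually happens as $s$ increases through $\tfrac{1+\sqrt{5}}{4}$ is that the triple intersection $h_1(H)\cap h_2(H)\cap h_3(H)$ collapses to $\{0\}$ at the critical value and becomes empty beyond it; the paper phrases this as ``any point lies in at most two of $h_1(H),h_2(H),h_3(H)$,'' hence in at most three cylinders overall. Your quadratic $\lambda^2+\lambda=1$ is exactly the paper's tangency condition $s(4s-2)=1$. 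Note also that your fallback of refining $H$ to $\bigcup_i h_i(H)$ cannot rescue the sharp endpoint: the paper's own computation gives $h_1(A')=h_2(B')=h_3(C')=0$ at $s=\tfrac{1+\sqrt{5}}{4}$, so the origin genuinely lies in all four $h_i(A_F)$, and the paper accordingly records the overlap bound as holding only a.e.\ there.
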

	\begin{proof}
		Let $\text{Fix}(h_1)=A, \text{Fix}(h_2)=B$ and $\text{Fix}(h_3)=C$.  Let $\mathcal{S}$ be the invariant set for the IFS $\mathcal{J}$.  The map $h_4$ flips the orientation about the $X$-axis. Since $A,B,C\in \mathcal{S},$ we have $A'=h_4(A)=\bigg(-\frac{2}{4s-2},-\frac{2}{\sqrt{3}(4s-2)}\bigg)\in \mathcal{S}, B'=h_4(B)=\bigg(\frac{2}{4s-2},-\frac{2}{\sqrt{3}(4s-2)}\bigg)\in  \mathcal{S}$ and $C'=h_4(C)=\bigg(0,\frac{4}{\sqrt{3}(4s-2)}\bigg)\in  \mathcal{S}.$ For $s\in (\frac{1}{2},1),$ our claim is that the convex set with vertices $A,B,C,A',B'$ and $C'$ is the invariant set for the IFS $\mathcal{J}$. We denote that convex set by $\text{Con}(ABCA'B'C').$  One can see that the set $\text{Con}(ABCA'B'C')$ is symmetric about $Y$-axis, see Figure~\ref{fig:convhull}.
		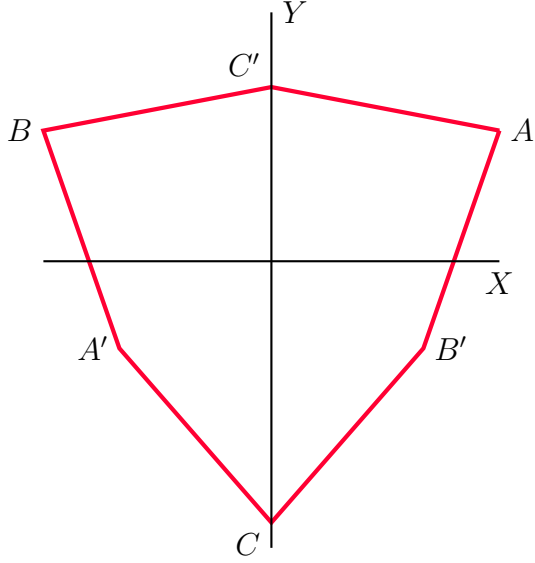
\begin{figure}[h!]
        \begin{center}
			\begin{tikzpicture}
				\draw[ultra thick, red](3, sqrt 3)--(0, 4/ sqrt 3)--(-3, sqrt 3)--(-2,-2/ sqrt 3)--(0, - sqrt 12)--(2,- 2/ sqrt 3)--(3, sqrt 3);
				\node[above left] at (0, 4/ sqrt 3) {$C'$};
				\node[right] at (3, sqrt 3) {$A$};
				\node[left] at (-3, sqrt 3) {$B$};
				\node[left] at (-2,-2/ sqrt 3) {$A'$};
				\node[right] at (2,-2/ sqrt 3) {$B'$};
				\node[below left] at (0, - sqrt 12) {$C$};
				\draw[thick, black](-3,0)--(0,0)--(3,  0);	
				\draw[thick, black](0,-3.8)--(0,0)--(0,  3.3);				
				\node[below] at (3,0) {$X$};
				\node[right] at (0,3.3) {$Y$};
			\end{tikzpicture}  
            \caption{The invariant convex hull of the Furstenberg IFS \eqref{eq:furstgh}.}\label{fig:convhull}
		\end{center}
        \end{figure}
        
		Now, we prove our claim. We have 
		$$h_1(A)=A,~~h_1(B)=\bigg(1, \frac{4s-6}{\sqrt{3}(4s-2)}\bigg)=B_{1},~~ h_1(C)=\bigg(\frac{4s-4}{4s-2}, \frac{4s}{\sqrt{3}(4s-2)}\bigg)=C_{1},$$
		$$h_{1}(A')=\bigg(\frac{-1+s(4s-2)}{s(4s-2)}, \frac{-1+s(4s-2)}{s\sqrt{3}(4s-2)}\bigg)=A_{1}', h_1(B')=\bigg(1, \frac{2+s(4s-2)}{s\sqrt{3}(4s-2)}\bigg)=B_{1}'$$
		$$h_1(C')=\bigg(\frac{1+s(4s-2)}{s(4s-2)}, \frac{-1+s(4s-2)}{s\sqrt{3}(4s-2)}\bigg)=C_{1}'.$$
		One can see that the points $B_{1}'$ and $C_{1}'$ are on the line $AC'$	and $AB'$, respectively. For $s\in (\frac{1}{2},1),$  we get $1<\frac{2}{(4s-2)}.$ And the point $\bigg(1, \frac{(4s-1)(4s-2)-8s}{\sqrt{3}(4s-2)}\bigg)$ is on the line $CB'$. Since $ -\frac{2}{\sqrt{3}(4s-2)}>\frac{4s-6}{\sqrt{3}(4s-2)}> \frac{(4s-1)(4s-2)-8s}{\sqrt{3}(4s-2)},$ the point $B_1$ is on the above side of line $CB'$.  For $s\in (\frac{1}{2},1),$ we have $-\frac{2}{4s-2}< \frac{4s-4}{4s-2}<0$. Thus the point $C_1$ is in the left side of the set  $\text{Con}(ABCA'B'C')$ and on the line joining $A$ and $B$. This implies that $h_1(\text{Con}(ABCA'B'C'))\subset \text{Con}(ABCA'B'C').$ For the mapping $h_2$, we have 
		$$h_2(B)=B,~~h_2(A)=\bigg(-1, \frac{4s-6}{\sqrt{3}(4s-2)}\bigg)=A_{2},~~ h_2(C)=\bigg(-\frac{4s-4}{4s-2}, \frac{4s}{\sqrt{3}(4s-2)}\bigg)=C_{2},$$
		$$h_{2}(B')=\bigg(-\frac{-1+s(4s-2)}{s(4s-2)}, \frac{-1+s(4s-2)}{s\sqrt{3}(4s-2)}\bigg)=B_{2}', h_2(A')=\bigg(-1, \frac{2+s(4s-2)}{s\sqrt{3}(4s-2)}\bigg)=A_{2}'$$
		$$h_2(C')=\bigg(-\frac{1+s(4s-2)}{s(4s-2)}, \frac{-1+s(4s-2)}{s\sqrt{3}(4s-2)}\bigg)=C_{2}'.$$ Thus, one can see that $h_2(\text{Con}(ABCA'B'C'))$ is the mirror image of  $h_1(\text{Con}(ABCA'B'C'))$ with respect to $Y$-axis. Thus, due to symmetry of the set $\text{Con}(ABCA'B'C')$ with respect to the $Y$-axis, we get $h_2(\text{Con}(ABCA'B'C'))\subset \text{Con}(ABCA'B'C').$  For the mapping $h_3$, we have
		$$h_3(C)=C,~~h_3(C')=\bigg(0, \frac{2(1-s(4s-2))}{\sqrt{3}s(4s-2)}\bigg)=C_{3}',$$
		$$h_{3}(A)=\bigg(\frac{-2}{4s-2}, \frac{6-8s}{\sqrt{3}(4s-2)}\bigg)=A_{3}, h_3(A')=\bigg(\frac{1}{s(4s-2)}, \frac{-1-2s(4s-2)}{\sqrt{3} s(4s-2)}\bigg)=A_{3}',$$
		$$h_3(B)=\bigg(\frac{2}{4s-2}, \frac{6-8s}{\sqrt{3}(4s-2)}\bigg)=B_{3} ,h_3(B')=\bigg(\frac{-1}{s(4s-2)}, \frac{-1-2s(4s-2)}{\sqrt{3} s(4s-2)}\bigg)=B_{3}'.$$ 
		One can see that the points $A_{3}'$ and $B_{3}'$ are on the line $CB'$ and $CA'$, respectively. For $s\in (\frac{1}{2}, 1),$ we have $\frac{-2}{\sqrt{3}(4s-2)}< \frac{6-8s}{\sqrt{3}(4s-2)}< \frac{-1-2s(4s-2)}{\sqrt{3} s(4s-2)}$ and 
		$\frac{6-8s}{\sqrt{3}(4s-2)}< \frac{-1-2s(4s-2)}{\sqrt{3} s(4s-2)}< \frac{4}{\sqrt{3}(4s-2)}.$ This implies that $h_3(\text{Con}(ABCA'B'C'))\subset \text{Con}(ABCA'B'C').$ 
		For the mapping $h_4$, we have
		$$h_4(A)=A', h_4(B)=B', h_4(C)=C',$$
		$$h_4(A')=\bigg(\frac{1}{s(4s-2)}, \frac{1}{\sqrt{3} s(4s-2)}\bigg)=A_{4}', h_4(B')=\bigg(\frac{-1}{s(4s-2)}, \frac{1}{\sqrt{3} s(4s-2)}\bigg)=B_{4}',$$
		$$h_4(C')=\bigg(0,\frac{-2}{\sqrt{3}s(4s-2)}\bigg).$$
		For $s\in (\frac{1}{2},1),$ we have $\frac{-8s}{\sqrt{3}(4s-2)}<\frac{-2}{\sqrt{3}s(4s-2)}<\frac{-2}{\sqrt{3}(4s-2)}, 0<\frac{1}{s(4s-2)}<\frac{2}{4s-2}$ and $0<\frac{1}{\sqrt{3}s(4s-2)}<\frac{2}{\sqrt{3}(4s-2)}.$ This implies that $h_4(\text{Con}(ABCA'B'C'))\subset \text{Con}(ABCA'B'C').$ Thus the set $\text{Con}(ABCA'B'C')$ is an invariant set for the IFS $\mathcal{J}.$ \par Now, we will estimate the overlapping number. For $s=\frac{1+\sqrt{5}}{4},$ one can see that  $A_{1}'=B_{2}'=C_{3}'=(0,0)$, $h_1(\text{Con}(ABCA'B'C'))$ is on the right side, $h_2(\text{Con}(ABCA'B'C'))$ is one the left side (mirror image of $h_1(\text{Con}(ABCA'B'C'))$ with respect to $Y$-axis) and  $h_3(\text{Con}(ABCA'B'C'))$ is below the $X$-axis. Thus, only at $(0,0)$, the overlapping number is $4$. This implies that $K\leq 3$ a.e. $x\in A_{F}.$ For a visualisation, see Figure~\ref{fig:firstlevel}.
        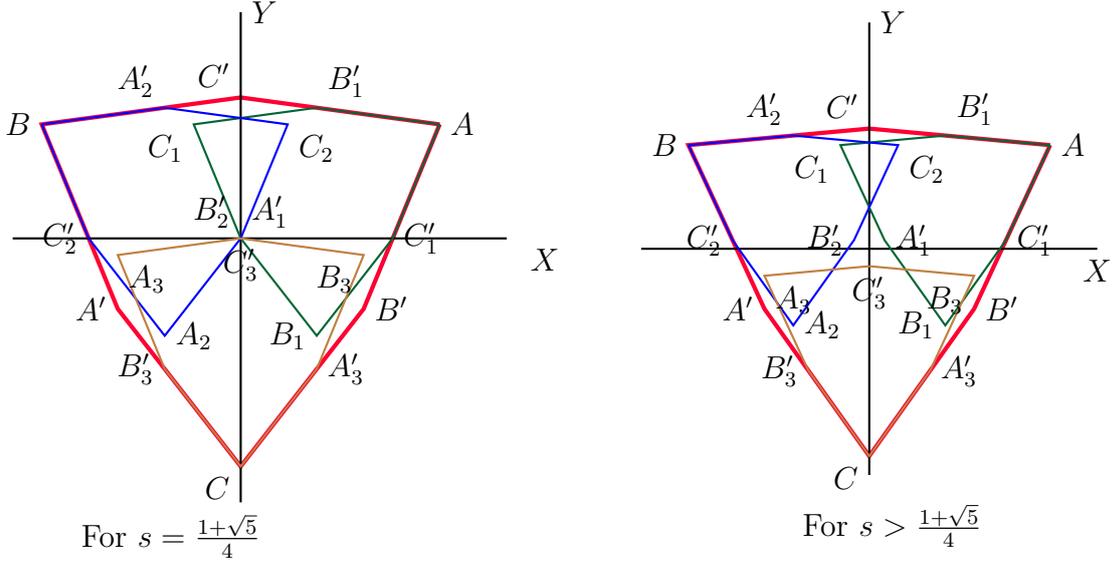
\begin{figure}[h!]
	\begin{minipage}{0.5\textwidth}
    	\begin{tikzpicture}
				\draw[ultra thick, red](3/2+ sqrt 5/2, 3/ sqrt 12 + sqrt 5/ sqrt 12 )--(0, sqrt 5/sqrt 3+1/sqrt 3)--(-3/2- sqrt 5/2, 3/ sqrt 12 + sqrt 5/ sqrt 12 )--(-sqrt 5/2-1/2, -sqrt 5/ sqrt 12-1/ sqrt 12)--(0, -3/ sqrt 3- sqrt 5 /sqrt 3)--(sqrt 5/2+1/2, -sqrt 5/ sqrt 12-1/ sqrt 12)--(3/2+ sqrt 5/2, 3/ sqrt 12 + sqrt 5/ sqrt 12 );
				\node[above left] at (0, sqrt 5/sqrt 3+1/sqrt 3) {$C'$};
				\node[right] at (3/2+ sqrt 5/2, 3/ sqrt 12 + sqrt 5/ sqrt 12 ) {$A$};
				\node[left] at (-3/2- sqrt 5/2, 3/ sqrt 12 + sqrt 5/ sqrt 12 ) {$B$};
				\node[left] at (-sqrt 5/2-1/2, -sqrt 5/ sqrt 12-1/ sqrt 12) {$A'$};
				\node[right] at (sqrt 5/2+1/2, -sqrt 5/ sqrt 12-1/ sqrt 12) {$B'$};
				\node[below left] at (0, -3/ sqrt 3- sqrt 5 /sqrt 3) {$C$};
				\draw[thick, black](-3,0)--(0,0)--(3.5,  0);	
				\draw[thick, black](0,-3.5)--(0,0)--(0,  3);				
				\node[below] at (4,0) {$X$};
				\node[right] at (0,3) {$Y$};
				\draw[thick, green](3/2+ sqrt 5/2, 3/ sqrt 12 + sqrt 5/ sqrt 12 )--(1, sqrt 3)--(1/2- sqrt 5/2, 3/ sqrt 12 + sqrt 5/ sqrt 12 )--(0, 0)--(1, -sqrt 5/ sqrt 3)--(2, 0)--(3/2+ sqrt 5/2, 3/ sqrt 12 + sqrt 5/ sqrt 12 );
				\node[above right] at (1, sqrt 3) {$B_{1}'$};
				\node[below left] at (1/2- sqrt 5/2, 3/ sqrt 12 + sqrt 5/ sqrt 12 ) {$C_{1}$};
				\node[above right] at (0, 0) {$A_{1}'$};
				\node[left] at (1, -sqrt 5/ sqrt 3) {$B_{1}$};
				\node[right] at (2, 0) {$C_{1}'$};
				\draw[thick, blue](-3/2- sqrt 5/2, 3/ sqrt 12 + sqrt 5/ sqrt 12 )--(-1, sqrt 3)--(-1/2+ sqrt 5/2, 3/ sqrt 12 + sqrt 5/ sqrt 12 )--(0, 0)--(-1, -sqrt 5/ sqrt 3)--(-2, 0)--(-3/2- sqrt 5/2, 3/ sqrt 12 + sqrt 5/ sqrt 12 );
				\node[above left] at (-1, sqrt 3) {$A_{2}'$};
				\node[below right] at (-1/2+ sqrt 5/2, 3/ sqrt 12 + sqrt 5/ sqrt 12 ) {$C_{2}$};
				\node[above left] at (0, 0) {$B_{2}'$};
				\node[right] at (-1, -sqrt 5/ sqrt 3) {$A_{2}$};
				\node[left] at (-2, 0) {$C_{2}'$};
				\draw[thick, brown](0, 0)--(-1/2- sqrt 5/2, sqrt 5/ sqrt 12 - sqrt 3/ 2)--(-1, -sqrt 3)--(0, -3/ sqrt 3- sqrt 5 /sqrt 3)--(1, -sqrt 3)--(1/2+ sqrt 5/2, sqrt 5/ sqrt 12 - sqrt 3/ 2)--(0, 0);
				\node[below] at (0, 0) {$C_{3}'$};
				\node[below right] at (-1/2- sqrt 5/2, sqrt 5/ sqrt 12 - sqrt 3/ 2) {$A_{3}$};
				\node[below left] at (1/2+ sqrt 5/2, sqrt 5/ sqrt 12 - sqrt 3/ 2) {$B_{3}$};
				\node[left] at (-1, -sqrt 3) {$B_{3}'$};
				\node[right] at (1, -sqrt 3) {$A_{3}'$};
			\end{tikzpicture}
		{\hspace*{1cm} For $s=\frac{1+\sqrt{5}}{4}$}
	\end{minipage}\hspace*{0.3cm}
	\begin{minipage}{0.5\textwidth}	\begin{tikzpicture} 
				\draw[ultra thick, red](7/5+ sqrt 24/5, 7/ sqrt 75 + sqrt 24/ sqrt 75 )--(0,  sqrt 96/sqrt 75+4/sqrt 75)--(-7/5- sqrt 24/5, 7/ sqrt 75 + sqrt 24/ sqrt 75 )--(-sqrt 24/5-2/5, -sqrt 24/ sqrt 75-2/ sqrt 75)--(0, -14/ sqrt 75- sqrt 96 /sqrt 75)--(sqrt 24/5+2/5, -sqrt 24/ sqrt 75-2/ sqrt 75)--(7/5+ sqrt 24/5, 7/ sqrt 75 + sqrt 24/ sqrt 75 );
				\node[above left] at (0,  sqrt 96/sqrt 75+4/sqrt 75) {$C'$};
				\node[right] at (7/5+ sqrt 24/5, 7/ sqrt 75 + sqrt 24/ sqrt 75 ) {$A$};
				\node[left] at (-7/5- sqrt 24/5, 7/ sqrt 75 + sqrt 24/ sqrt 75 ) {$B$};
				\node[left] at (-sqrt 24/5-2/5, -sqrt 24/ sqrt 75-2/ sqrt 75) {$A'$};
				\node[right] at (sqrt 24/5+2/5, -sqrt 24/ sqrt 75-2/ sqrt 75) {$B'$};
				\node[below left] at (0, -14/ sqrt 75- sqrt 96 /sqrt 75) {$C$};
				\draw[thick, black](-3,0)--(0,0)--(3,  0);	
				\draw[thick, black](0,-3)--(0,0)--(0,  3);				
				\node[below] at (3,0) {$X$};
				\node[right] at (0,3) {$Y$};
				\draw[thick, green](7/5+ sqrt 24/5, 7/ sqrt 75 + sqrt 24/ sqrt 75 )--(1, 13/sqrt 75)--(3/5- sqrt 24/5, 7/ sqrt 75 + sqrt 24/ sqrt 75 )--(1/5, 1/ sqrt 75)--(1, 1/ sqrt 75- sqrt 96/sqrt 75)--(9/5, 1/sqrt 75)--(7/5+ sqrt 24/5, 7/ sqrt 75 + sqrt 24/ sqrt 75 );
				\node[above right] at (1, 13/sqrt 75) {$B_{1}'$};
				\node[below left] at (3/5- sqrt 24/5, 7/ sqrt 75 + sqrt 24/ sqrt 75 ) {$C_{1}$};
				\node[right] at (1/5, 1/ sqrt 75) {$A_{1}'$};
				\node[left] at (1, 1/ sqrt 75- sqrt 96/sqrt 75) {$B_{1}$};
				\node[right] at (9/5, 1/sqrt 75) {$C_{1}'$};
				\draw[thick, blue](-7/5- sqrt 24/5, 7/ sqrt 75 + sqrt 24/ sqrt 75 )--(-1, 13/sqrt 75)--(-3/5+ sqrt 24/5, 7/ sqrt 75 + sqrt 24/ sqrt 75 )--(-1/5, 1/ sqrt 75)--(-1, 1/ sqrt 75- sqrt 96/sqrt 75)--(-9/5, 1/sqrt 75)--(-7/5- sqrt 24/5, 7/ sqrt 75 + sqrt 24/ sqrt 75 );
				\node[above left] at (-1, 13/sqrt 75) {$A_{2}'$};
				\node[below right] at (-3/5+ sqrt 24/5, 7/ sqrt 75 + sqrt 24/ sqrt 75 ) {$C_{2}$};
				\node[left] at (-1/5, 1/ sqrt 75) {$B_{2}'$};
				\node[right] at (-1, 1/ sqrt 75- sqrt 96/sqrt 75) {$A_{2}$};
				\node[left] at (-9/5, 1/sqrt 75){$C_{2}'$};
				\draw[thick, brown](0, -2/sqrt 75)--(-2/5- sqrt 24/5, sqrt 24/ sqrt 75 - 8/ sqrt 75)--(-4/5, -14/ sqrt 75)--(0, -14/ sqrt 75- sqrt 96 /sqrt 75)--(4/5, -14/ sqrt 75)--(2/5+ sqrt 24/5, sqrt 24/ sqrt 75 - 8/ sqrt 75)--(0, -2/sqrt 75);
				\node[below] at (0, -2/sqrt 75) {$C_{3}'$};
				\node[below right] at (-2/5- sqrt 24/5, sqrt 24/ sqrt 75 - 8/ sqrt 75) {$A_{3}$};
				\node[below left] at (2/5+ sqrt 24/5, sqrt 24/ sqrt 75 - 8/ sqrt 75){$B_{3}$};
				\node[left] at (-4/5, -14/ sqrt 75) {$B_{3}'$};
				\node[right] at (4/5, -14/ sqrt 75) {$A_{3}'$};
			\end{tikzpicture}  
		{\hspace*{2cm} For $s>\frac{1+\sqrt{5}}{4}$}
	\end{minipage}	
    \caption{The first level cylinder sets for the Furstenberg IFS \eqref{eq:furstgh}.}\label{fig:firstlevel}
    \end{figure}

        Now, we assume that  $s\in (\frac{1+\sqrt{5}}{4},1)$. In this case, we have  $s(4s-2)>1.$ Thus, $A_{1}'$ and $B_{1}'$ are in the $1$st and $2$nd quadrant of the set $\text{Con}(ABCA'B'C'),$ respectively. And $C_{3}'$ is on negative $Y$-axis. The points $C_{1}'$ and $C_{2}'$ are on the above of the $X$-axis and on line $AB'$ and $BA'$, respectively. Thus, any point of $\text{Con}(ABCA'B'C')$ can lie at most two of the sets $h_1(\text{Con}(ABCA'B'C'))$, $h_2(\text{Con}(ABCA'B'C'))$ and $h_3(\text{Con}(ABCA'B'C'))$. Thus, $K\leq 3.$ This completes the proof.
	\end{proof}
	
	\begin{proof}[Proof of Theorem~\ref{GH1}]
		Combining Proposition~\ref{prop:covering} and Proposition~\ref{overNUM2}, we get $\dim_{H}(\mu_F)>3-t,$
		where $t$ is the affinity dimension of the IFS $\mathcal{I}$ for every $s\in  \bigg[\frac{1+\sqrt{5}}{4},1\bigg)$. Then the claim follows by Theorem~\ref{thm:rapaport}.
	\end{proof}

\subsection{Almost every-type result}
Next, we discuss almost sure results for the above-constructed fractal surfaces. \\
\subsection*{Construction of Graph directed IFS associated with the projection of the Furstenberg IFS} One can see that the group $( \mathcal{G})$ generated by the linear part of the Furstenberg IFS $\mathcal{J}=\{h_1,h_2,h_3,h_4\}$ is a finite group of order $12.$ Precisely, the group elements are as follows:
$$Q_1=\begin{bmatrix} \frac{1}{2} & \frac{\sqrt 3}{2} \\ \frac{\sqrt 3}{2} & \frac{-1}{2} \end{bmatrix}, Q_2=\begin{bmatrix} \frac{1}{2} & \frac{-\sqrt 3}{2} \\ \frac{-\sqrt 3}{2} & \frac{-1}{2} \end{bmatrix}, Q_3=\begin{bmatrix} -1 & 0 \\ 0 & 1 \end{bmatrix}, Q_4=\begin{bmatrix} -1 & 0 \\ 0 & -1 \end{bmatrix},$$
$$Q_5=\begin{bmatrix} 1 & 0 \\ 0 & 1 \end{bmatrix}, Q_6=\begin{bmatrix} \frac{-1}{2} & \frac{-\sqrt 3}{2} \\ \frac{-\sqrt 3}{2} & \frac{1}{2} \end{bmatrix}, Q_7=\begin{bmatrix} \frac{-1}{2} & \frac{\sqrt 3}{2} \\ \frac{\sqrt 3}{2} & \frac{1}{2} \end{bmatrix}, Q_8=\begin{bmatrix} \frac{-1}{2} & \frac{\sqrt 3}{2} \\ \frac{-\sqrt 3}{2} & \frac{-1}{2} \end{bmatrix},$$	
$$Q_9=\begin{bmatrix} \frac{-1}{2} & \frac{-\sqrt 3}{2} \\ \frac{\sqrt 3}{2} & \frac{-1}{2} \end{bmatrix}, Q_{10}=\begin{bmatrix} 1 & 0 \\ 0 & -1 \end{bmatrix}, Q_{11}=\begin{bmatrix} \frac{1}{2} & \frac{-\sqrt 3}{2} \\ \frac{\sqrt 3}{2} & \frac{1}{2} \end{bmatrix}, Q_{12}=\begin{bmatrix} \frac{1}{2} & \frac{\sqrt 3}{2} \\ \frac{-\sqrt 3}{2} & \frac{1}{2} \end{bmatrix}.$$	
First, we consider a direction $v=(1,1)\in \mathbb{R}^2$. Now, we construct a graph directed IFS associated with the Furstenberg IFS $\mathcal{J}$. First, we define a set of vertices $\mathcal{V}:=\{v_1,v_2,\dots,v_{12}\}$ such that $v_i=Q_{i}^{T}v.$ The set $\mathcal{E}_{l,m}$ denotes the set of all directed edges from vertex $v_{l}$ to $v_{m}$. For $l,m\in \mathcal{V}$, if these exists a $k\in \{1,2,3,4\}$ such that $v_{l}=Q_{k}v_{m}$, then we define a directed edge $e\in \mathcal{E}_{l,m}$. Since $\#{\{Q_{l}Q_{k}:k\in {1,2,3,4}\}}=4$ for each $l\in \{1,2,\dots,12\}$, there are only $4$ directed edges from the vertex $v_{l}$ and only $4$ directed edges toward the vertex $v_{l}$.  The set of directed edges is defined by $\mathcal{E}:=\{\mathcal{E}_{l,m}:1\leq l,m\leq 12\}$. The directed graph is denoted by $G(\mathcal{V},\mathcal{E}).$ For $e\in \mathcal{E}_{l,m}$, the associated map $f_{e}$ is defined by 
$$f_{e}(x)=\frac{1}{2s}x+v_{l}\cdot t_{k},$$
where $k\in \{1,2,3,4\}$ such that $v_{l}=Q_{k}v_{m}$ and $t_k$ is the translation vector corresponding to the map $h_{k}$ in the Furstenberg IFS $\mathcal{J}$. Let $A_F$ be the attractor of the IFS $\mathcal{J}$. One can also see that for each $l\in \{1,2,\dots,12\}$, we have $v_{l}\cdot t_{k_1}\ne v_{l}\cdot t_{k_2}$ for all $k_{1}\ne k_{2}\in \{1,2,3,4\}$. The notation $\text{Proj}_{v}(A)$ denotes the projection of the set $A$ in the direction $v$. Thus, we have 
\begin{align*}
   \text{Proj}_{v_l}(A_F)&= \text{Proj}_{v_l}\bigg(\bigcup_{k=1}^{4}h_k(A_F)\bigg)=\bigcup_{k=1}^{4}\text{Proj}_{v_l}\bigg(\frac{1}{2s}Q_{k}(A_F)+t_{k}\bigg)\\&= \bigcup_{k=1}^{4}\bigg(\frac{1}{2s}\text{Proj}_{Q_{k}^{T}v_l}(A_F)+\text{Proj}_{v_l}(t_{k})\bigg)=\bigcup_{m=1}^{12}\bigcup_{e\in \mathcal{E}_{l,m}}f_{e}( \text{Proj}_{v_m}(A_F)).
\end{align*}
Thus, $\bigcup_{l=1}^{12}\text{Proj}_{v_l}(A_F)$ is the attractor of the graph directed IFS $\{f_{e}: e\in \mathcal{E}\}$ with directed graph $G(\mathcal{V},\mathcal{E}).$\\

\subsection*{Induced Markov chain} Let $X_n$ be a Markov chain on the group $\mathcal{G}$. For each $l\in \{1,2,\dots,12\}$, one can see that  $\#{\{Q_{l}Q_{k}:k\in {1,2,3,4}\}}=4$. Thus, we define the transition probability for the Markov chain by 
$$\mathbb{P}(X_{n+1}=Q_m | X_n=Q_l)=\begin{cases}
    \frac{1}{4}\quad  \text{if} \quad \exists~ k\in \{1,2,3,4\}~ \text{with}~ Q_{l}Q_{k}=Q_{m}\\ 0 \quad \text{otherwise}.
\end{cases}$$
Let $P$ be the transition matrix associated with the Markov chain $X_n$. Thus, the matrix $P$ is as follows
$$[P]_{i,j}=\begin{cases}
    \frac{1}{4}\quad  \text{if} \quad \exists~ k\in \{1,2,3,4\}~ \text{with}~ Q_{i}Q_{k}=Q_{j}\\ 0 \quad \text{otherwise},
\end{cases}$$
where $[P]_{i,j}$ is the $ij$th entry of the matrix $P$ for $1\leq i,j\leq 12$. 

We have 
$$Q_2 \cdot Q_1=Q_8,~ Q_1 \cdot Q_2=Q_9,~ Q_2 \cdot Q_4=Q_7,~ Q_1 \cdot Q_4=Q_6=Q_4 \cdot Q_1,~ $$
$$Q_1 \cdot Q_3=Q_8,~ Q_2 \cdot Q_3=Q_9,~Q_{11} \cdot Q_1=Q_7, Q_{12} \cdot Q_{1}=Q_{10},$$
$$Q_1 \cdot Q_1= Q_2 \cdot Q_2=Q_3 \cdot Q_3=Q_4 \cdot Q_4=Q_5=Id,~Q_4 \cdot Q_3=Q_{10},~ Q_1 \cdot Q_{10}=Q_{11},~ Q_2 \cdot Q_{10}=Q_{12}.$$
Now, we define two sets $A=\{Q_5,Q_6,Q_7,Q_8,Q_9,Q_{10}\}$ and $B=\{Q_1,Q_2,Q_3,Q_4,Q_{11},Q_{12}\}$. By the above one can see that for each $Q_i\in A$, there exist a $k\in \{1,2,3,4\}$ such that $Q_iQ_{k}\in B$ and vice versa. Thus, the directed graph associated with the transition matrix $P$ is bipartite graph. So, by considering the arrangement $\{Q_5,Q_6,Q_7,Q_8,Q_9,Q_{10},Q_1,Q_2,Q_3,Q_4,\\Q_{11},Q_{12}\}$, the transition matrix $P$ can be rewritten in the following form 
\begin{equation*}
P=\begin{bmatrix}
0 & 0 & 0 & 0 & 0 & 0 & 1/4 & 1/4 & 1/4 & 1/4 & 0 & 0\\
0 & 0 & 0 & 0 & 0 & 0 & 1/4 & 0 & 0 & 1/4 & 1/4 & 1/4\\
0 & 0 & 0 & 0 & 0 & 0 & 0 & 1/4 & 0 & 1/4 & 1/4 & 1/4\\
0 & 0 & 0 & 0 & 0 & 0 & 1/4 & 1/4 & 1/4 & 0 & 1/4 & 0\\
0 & 0 & 0 & 0 & 0 & 0 & 1/4 & 1/4 & 1/4 & 0 & 0 & 1/4\\
0 & 0 & 0 & 0 & 0 & 0 & 0 & 0 & 1/4 & 1/4 & 1/4 & 1/4\\
1/4 & 1/4 & 0 & 1/4 & 1/4 & 0 & 0 & 0 & 0 & 0 & 0 & 0\\
1/4 & 0 & 1/4 & 1/4 & 1/4 & 0 & 0 & 0 & 0 & 0 & 0 & 0\\
1/4 & 0 & 0 & 1/4 & 1/4 & 1/4 & 0 & 0 & 0 & 0 & 0 & 0\\
1/4 & 1/4 & 1/4 & 0 & 0 & 1/4 & 0 & 0 & 0 & 0 & 0 & 0\\
0 & 1/4 & 1/4 & 1/4 & 0 & 1/4 & 0 & 0 & 0 & 0 & 0 & 0\\
0 & 1/4 & 1/4 & 0 & 1/4 & 1/4 & 0 & 0 & 0 & 0 & 0 & 0\\
\end{bmatrix}
\end{equation*}
Since $Q_{5}$ is identity matrix ($Id$), we have 
$$\mathbb{P}(X_{n}= Id | X_0=Id)=\begin{bmatrix}1&0&\cdots&0 \end{bmatrix}P^{n}\begin{bmatrix}1\\0\\\vdots\\0 \end{bmatrix}=\frac{\#\{\tau\in \{1,2,3,4\}^n: Q_{\tau}=Id\}}{4^n}.$$ 
Clearly, $\mathbb{P}(X_{2n+1}= Id | X_0=Id)=0$ for $n\in \mathbb{N}\cup \{0\}$ and $\mathbb{P}(X_{2}= Id | X_0=Id)=\frac{1}{4}.$
The period of the directed graph associated with the transition matrix $P$ is defined as follows
$$\text{period}=\text{l.c.d.}\{n\in \mathbb{N}: \mathbb{P}(X_{n}= Id | X_0=Id)>0\}.$$ This implies that $\text{period}$ is $2$. But, $P^2=\begin{bmatrix} R&0\\0&S\end{bmatrix}$, where $R$ and $S$ are $6
\times 6$ matrices. The matrix $R$ is as follows
\begin{equation*}
R=\begin{bmatrix}
1/4 & 1/8 & 1/8 & 3/16 & 3/16 & 1/8\\
1/8 & 1/4 & 3/16 & 1/8 & 1/8 & 3/16\\
1/8 & 3/16 & 1/4 & 1/8 & 1/8 & 3/16\\
3/16 & 1/8 & 1/8 & 1/4 & 3/16 & 1/8\\
3/16& 1/8 & 1/8 & 3/16 & 1/4 & 1/8\\
1/8 & 3/16 & 3/16 & 1/8 & 1/8 & 1/4\\
\end{bmatrix}
\end{equation*}
The above matrix $R$ is irreducible and aperiodic (period is 1). Thus by Perron-Frobenius theorem, we get 
$$R^n\to \begin{bmatrix}1\\1\\1\\1\\1\\1 \end{bmatrix} \begin{bmatrix}1/6&1/6&1/6&1/6&1/6&1/6\end{bmatrix}\quad \text{as}\quad  n\to \infty.$$ This implies that 
$$\mathbb{P}(X_{2n}= Id | X_0=Id)=\frac{\#\{\tau\in \{1,2,3,4\}^{2n}: Q_{\tau}=Id\}}{4^{2n}}\to \frac{1}{6}\quad \text{as}\quad n\to \infty.$$
Thus, these exists an $N_{0}\in \mathbb{N}$ such that 
\begin{equation}\label{eq:boundid}
    \#\{\tau\in \{1,2,3,4\}^{2n}: Q_{\tau}=Id\}\geq  \frac{4^{2n}}{12} \quad \forall~~n\geq N_{0}.
\end{equation}
{
\subsection*{Non-degeneracy of constructed GD-IFS} Now, we will show that the constructed graph directed IFS is non-degenerate, which will be useful for determining almost sure results. The set of all infinite-length edges is denoted by ${\mathcal{E}}^*$. Let $\bold{e}=(e_1,e_2,\dots), \bold{e'}=(e'_1,e'_2,\dots)\in {\mathcal{E}}^*$ such that $\bold{e}\ne \bold{e'}$ and $e_1\in \mathcal{E}_{l,m_1}, e'_1\in \mathcal{E}_{l,m_2}.$ Let $m\in \mathbb{N}$ be smallest  number such that $e_m\ne e'_{m}$. Then, there exist $k_1\ne k_2\in \{1,2,\dots,12\}$ such that $e_m\in \mathcal{E}_{l_1,m_{k_1}}$ and $e'_m\in \mathcal{E}_{l_1,m_{k_2}}$. Let $\Pi$ be the natural projection corresponding to the constructed graph-directed IFS. Thus, we have 
 $$\Pi(\sigma^{m-1} \bold{e})=f_{e_m}(\Pi(\sigma^{m} \bold{e}))=\frac{1}{2s}(\Pi(\sigma^{m} \bold{e}))+v_{l_1}\cdot t_{r_1}$$
$$\Pi(\sigma^{m-1} \bold{e'})=f_{e'_m}(\Pi(\sigma^{m} \bold{e'}))=\frac{1}{2s}(\Pi(\sigma^{m} \bold{e'}))+v_{l_1}\cdot t_{r_2},$$ where $r_1\ne r_2$. Since $v_{l_1}\cdot t_{r_1}\ne v_{l_1}\cdot t_{r_2}$, we have $\Pi(\sigma^{m-1} \bold{e})\not\equiv \Pi(\sigma^{m-1} \bold{e'})$ as analytic functions of $s$ on $(1/2,\infty)$, and in particular, on $(1/2,1/)$. Thus, we get $\Pi(\bold{e})\not\equiv \Pi(\bold{e'})$. This implies that the graph-directed IFS is non-degenerate. \par
}

Now, by using the above constructed GD-IFS, we will prove typical type results for the Hausdorff dimension.
\begin{proof}[Proof of Theorem~\ref{GH2}] The upper bound follows by \eqref{eq:GHub}. Now, we show the lower bound. \par 
 
Set $\mathcal{N}_{n}:=\#\{\tau\in \{1,2,3,4\}^{2n}: Q_{\tau}=Id\}.$
For $n\geq N_0$, we define an self-affine IFS $$\Phi_{n}:=\{W_{\tau}: Q_{\tau}=Id~\text{and}~ \tau\in \{1,2,3,4\}^{2n}\}.$$ Let $A_{\Phi_n}$ be the attractor of the IFS $\Phi_n$.
The affinity dimension ($t_n$) of the self-affine IFS $\Phi_n$ is uniquely determined by following equation
$$\sum_{\substack{\tau\in \{1,2,3,4\}^{2n}\\Q_{\tau}=Id}} s^{2n}\bigg(\frac{1}{2^{2n}}\bigg)^{t_{n}-1}=1$$
Let $(p_{i})_{i=1}^{\mathcal{N}_n}$ be a probability vector such that $p_i=\frac{1}{\mathcal{N}_n}$ for all $1\leq i\leq \mathcal{N}_n$. Let $\mathcal{F}_{n}:=\{h_{\tau}: Q_{\tau}=Id~\text{and}~ \tau\in \{1,2,3,4\}^{2n}\}\}$ be the Furstenberg IFS corresponding to the IFS $\Phi_{n}$. Let $\mu_{\mathcal{F}}^{n}$ be the invariant measure corresponding to the IFS $\mathcal{F}_{n}$ with probability vector $(p_{i})_{i=1}^{\mathcal{N}_n}.$ Since the graph directed IFS $\{f_{e}: e\in \mathcal{E}\}$ with directed graph $G(\mathcal{V},\mathcal{E})$ corresponding to the Furstenberg IFS $\mathcal{J}$ is non-degenerate, the projection of the $\mathcal{F}_{n}$ on the direction $v=(1,1)\in \mathbb{R}^2$ is also non-degenerate. Then, by the result of Hochman \cite{Hochman2014}, there exist a set $\mathcal{E}_{n}\subset (\frac{1}{2},1)$ with $\dim_{H}(\mathcal{E}_{n})=0$ such that 
$$\dim_{H}(\mu_{\mathcal{F}}^{n})\geq \min\bigg\{1,\frac{-\sum_{i=1}^{\mathcal{N}_n}p_i\log p_i}{\sum_{i=1}^{\mathcal{N}_n}p_i\log (2s)^{2n}}\bigg\}=\min\bigg\{1,\frac{\log \mathcal{N}_n}{2n\log (2s)}\bigg\}\quad \forall ~s\in \bigg(\frac{1}{2},1\bigg)\setminus \mathcal{E}_{n}.$$
Now, by using the estimate \eqref{eq:boundid} of $\mathcal{N}_n$, we get 
$$\dim_{H}(\mu_{\mathcal{F}}^{n})\geq \min\bigg\{1,\frac{\log 4}{\log (2s)}-\frac{\log 12}{2n \log 2s}\bigg\}\quad \forall ~s\in \bigg(\frac{1}{2},1\bigg)\setminus \mathcal{E}_{n}.$$
Let $\hat{t}_n \in \mathbb{R}$ be such that 
$$\sum_{\substack{\tau\in \{1,2,3,4\}^{2n}\\Q_{\tau}=Id}} s^{2n}\bigg(\frac{1}{2^{2n}}\bigg)^{\hat{t}_{n}-1}\geq \frac{4^{2n}}{12}s^{2n}\bigg(\frac{1}{2^{2n}}\bigg)^{\hat{t}_{n}-1}=1.$$
This implies that $t_{n}\geq \hat{t}_n$ and 
$$\hat{t}_{n}=1+\frac{\log 4s}{\log 2}-\frac{\log 12}{2n \log 2}=3+\frac{\log s}{\log 2}-\frac{\log 12}{2n \log 2}.$$
Clearly, $\hat{t}_n \to t$ as $n\to \infty$. Since $\frac{\log 4}{\log (2s)}>-\frac{\log s}{\log 2}~ \forall~ s\in \big(\frac{1}{2},1\big)$, we have 
$$\min\bigg\{1,\frac{\log 4}{\log (2s)}-\frac{\log 12}{2n \log 2s}\bigg\}>3-\hat{t}_{n}\geq 3-t_n~ \forall~ s\in \bigg(\frac{1}{2},1\bigg)$$
for all $n\geq N_1$, where $N_1\in \mathbb{N}$ is some large number. This implies that for $n\geq \max\{N_0,N_1\}$ we get
$$\dim_{H}(\mu_{\mathcal{F}}^{n})>3-t_n\quad \forall ~s\in \bigg(\frac{1}{2},1\bigg)\setminus \mathcal{E}_{n}.$$
Then, by Theorem~\ref{thm:rapaport}, for $n\geq \max\{N_0,N_1\}$ we obtain
 $$\dim_{H}(A_{\Phi_n})=t_n\geq \hat{t}_n \quad \forall ~s\in \bigg(\frac{1}{2},1\bigg)\setminus \mathcal{E}_{n}.$$
Set $\mathcal{E}:=\bigcup_{n=\max\{N_0,N_1\}} \mathcal{E}_n.$ Thus, $\dim_{H}(\mathcal{E})=0.$ This implies that 
$$\dim_{H}(G(f^*))\geq \dim_{H}(A_{\Phi_n})\geq \hat{t}_n \quad \forall ~s\in \bigg(\frac{1}{2},1\bigg)\setminus \mathcal{E}.$$ Thus, for all $s\in \big(\frac{1}{2},1\big)\setminus \mathcal{E},$ we get
$$\dim_{H}(G(f^*))\geq t \quad \forall ~s\in \bigg(\frac{1}{2},1\bigg)\setminus \mathcal{E}.$$
Since $\dim_{H}(G(f^*))\leq \overline{\dim}_{B}(G(f^*)) \leq t=3+\frac{\log(s)}{\log(2)},$ we have 
$$\dim_{H}(G(f^*))={\dim}_{B}(G(f^*))= t=3+\frac{\log(s)}{\log(2)}$$
for all $s\in \big(\frac{1}{2},1\big )\setminus \mathcal{E}$. This completes the proof.
\end{proof}

\bibliographystyle{amsplain}


\end{document}